\newtheorem{cor}{Corollary}
\newtheorem{lem}{Lemma}
\newtheorem{prop}{Proposition}
\newtheorem{conj}{Conjecture}
\newtheorem{thm}{Theorem}
\theoremstyle{remark}
\newtheorem{Rem}{Remark}
\DeclareMathOperator {\GL}{GL}
\DeclareMathOperator {\Fl}{Fl}
\DeclareMathOperator {\id}{id}
\DeclareMathOperator {\End}{End}
\newcommand {\eql}[2]{\begin{equation}\label{#1}#2\end{equation}}
\newcommand{\BZ}{{\mathbb Z}}
\newcommand{\BN}{{\mathbb N}}
\newcommand {\gS}{\mathfrak S}
\newcommand {\bH}{\mathbf H}
\newcommand {\bR}{\mathbf R}
\newcommand {\N}{\mathcal N}
\newcommand{\CR}{{\mathcal R}}
\newcommand{\CH}{{\mathcal H}}  
\newcommand{\sk}{{\mathsf k}}
\renewcommand{\b}{{\beta}}
\newcommand{\s}{\sigma}
\newcommand{\tw}{{\tilde w}}
\newcommand{\btu}{\bigtriangleup}
\newcommand{\invl}{\leftrightarrow}
\newcommand{\nc}{\newcommand}
\nc{\on}{\operatorname}
\nc{\ol}{\overline}
\newcommand{\Ql}{{{\underline{\overline{\mathbb Q}}}{}_l}}
\nc{\Fq}{{\mathbb F}_q}
\nc{\Fqb}{\ol{{\mathbb F}_q}}
\newcommand{\utH}{\tilde{\underline H}{}}
\nc{\aff}{{\on{aff}}}
\nc{\sph}{{\on{sph}}}
\nc{\Tate}{{\on{Tate}}}
\nc{\Hall}{{\on{Hall}}}
\nc{\bHall}{{\mathbf{Hall}}}
\nc{\CHall}{{{\mathcal H}a\ell\ell}}
\nc{\Mall}{{\on{Mall}}}
\nc{\bMall}{{\mathbf{Mall}}}
\nc{\CMall}{{{\mathcal M}a\ell\ell}}
\nc{\MA}{{\mathcal{MA}}}
\nc{\MC}{{\mathcal{MC}}}
\nc{\Char}{{\mathsf{CH}}}
\nc{\hR}{\hat{\mathcal R}{}^\aff}
\nc{\hRs}{\hat{\mathcal R}{}^\sph}
\nc{\tR}{\tilde{\mathcal R}{}^\sph}
\nc{\tY}{{\tilde{\mathfrak Y}_{n,m}}}
\nc{\fY}{{{\mathfrak Y}_{n,m}}}
\nc{\tX}{{\tilde{\mathfrak X}_{n,m}}}
\nc{\fX}{{{\mathfrak X}_{n,m}}}
\nc{\Gm}{{{\mathbb G}_m}}
\nc{\Pinf}{{\mathbf P}}
\nc{\Flaff}{{\mathbf{Fl}}}
\nc{\Gr}{{\mathbf{Gr}}}
\nc{\bV}{{\mathbf V}}
\nc{\bVo}{{\overset{\circ}{\mathbf V}}}
\nc{\bP}{{\mathbf P}}
\nc{\bO}{{\mathbf O}}
\nc{\bF}{{\mathbf F}}
\nc{\bI}{{\mathbf I}}
\nc{\bv}{{\mathbf v}}
\nc{\bq}{{\mathbf q}}
\nc{\blambda}{{\boldsymbol{\lambda}}}
\nc{\bmu}{{\boldsymbol{\mu}}}
\nc{\bnu}{{\boldsymbol{\nu}}}
\nc{\bpi}{{\boldsymbol{\pi}}}
\nc{\bfeta}{{\boldsymbol{\eta}}}
\nc{\bzero}{{\boldsymbol{0}}}
\nc{\GO}{{\mathbf{G}_\bO}}
\nc{\GF}{{\mathbf{G}_\bF}}
\nc{\iso}{{\stackrel{\sim}{\longrightarrow}}}
\nc{\tB}{{\widetilde{\mathcal B}}}
\nc{\BO}{{\mathbb O}}
\nc{\CA}{{\mathcal A}}
\nc{\CB}{{\mathcal B}}
\nc{\calC}{{\mathcal C}}
\nc{\CY}{{\mathcal Y}}
\nc{\CF}{{\mathcal F}}
\nc{\CG}{{\mathcal G}}
\nc{\CM}{{\mathcal M}}
\nc{\CN}{{\mathcal N}}
\nc{\CP}{{\mathcal P}}
\nc{\CS}{{\mathcal S}}
\nc{\CU}{{\mathcal U}}
\nc{\CW}{{\mathcal W}}
\nc{\CX}{{\mathcal X}}
\nc{\fc}{{\mathfrak c}}
\nc{\fu}{{\mathfrak u}}
\nc{\fv}{{\mathfrak v}}
\begin{document}

\title{Mirabolic affine Grassmannian and character sheaves}
\author{Michael Finkelberg} 
\address{IMU, IITP and State University Higher School of Economy\\
Mathematics Department, rm. 517\\
20 Myasnitskaya st.\\
Moscow 101000 Russia}
\email{fnklberg@gmail.com}
\author{Victor Ginzburg}
\address{University of Chicago, 
Mathematics Department, Chicago, 
IL 60637, USA}
\email{ginzburg@math.uchicago.edu}
\author{Roman Travkin}
\address{Massachusetts Institute of Technology\\
Mathematics Department, Cambridge MA 02139 USA}
\email{travkin@math.mit.edu}

\begin{abstract}
We compute the Frobenius trace functions of mirabolic character
sheaves defined over a finite field. The answer is given in terms of
the character values of general linear groups over the finite field,
and the structure constants of multiplication in the mirabolic
Hall-Littlewood basis of symmetric functions, introduced by Shoji.
\end{abstract}

\maketitle

\bigskip

\section{Introduction}

\subsection{}
This note is a sequel to~\cite{T}. We make a free use of notations and
results thereof. Our goal is to study the mirabolic character sheaves
introduced in~\cite{FG}. According to Lusztig's results, the unipotent
character sheaves on $\GL_N$ are numbered by the set of partitions of
$N$. For such a partition $\lambda$ we denote by $\CF_\lambda$ the
corresponding character sheaf. If the base field is $\sk=\Fq$,
the Frobenius trace function of a character sheaf
${\mathcal F}_\lambda$ on a unipotent class of type $\mu$ is 
$q^{n(\mu)}K_{\lambda,\mu}(q^{-1})$ where $K_{\lambda,\mu}$ is the 
Kostka-Foulkes polynomial, and $n(\mu)=\sum_{i\geq1}(i-1)\mu_i$, 
see~\cite{L2}. 

Let $V=\sk^N$, so that $\GL_N=\GL(V)$. For a pair of partitions
$(\lambda,\mu)$ such that $|\lambda|+|\mu|=N$ the corresponding unipotent
mirabolic character sheaf $\CF_{\lambda,\mu}$ on $\GL(V)\times V$
was constructed
in~\cite{FG}. On the other hand, the $GL_N$-orbits in the product of
the unipotent cone and $V$ are also numbered by the set of pairs of
partitions $(\lambda',\mu')$ such that $|\lambda'|+|\mu'|=N$
(see~\cite{T}). In Theorem~\ref{past} we compute the Frobenius trace
function of a mirabolic character sheaf $\CF_{\lambda,\mu}$ on an
orbit corresponding to $(\lambda',\mu')$. The answer is given in terms
of certain polynomials $\Pi_{(\lambda',\mu')(\lambda,\mu)}$, the
mirabolic analogues of the Kostka-Foulkes polynomials introduced
in~\cite{Sh}. More generally, 
in~\ref{medium} we compute the Frobenius trace functions (on any
orbit) of a wide class of Weil mirabolic character sheaves. These
trace functions form a basis in the space of $\GL_N(\Fq)$-invariant
functions on $\GL_N(\Fq)\times\Fq^N$, and we conjecture that the above
class of sheaves exhausts all the irreducible $\Gm$-equivariant Weil
mirabolic character sheaves. This would give a positive answer to a
question of G.~Lusztig. 

Recall that the Kostka-Foulkes polynomials are the matrix coefficients
of the transition matrix from the Hall-Littlewood basis to the Schur
basis of the ring $\Lambda$ of symmetric functions. Similarly, the
polynomials $\Pi_{(\lambda',\mu')(\lambda,\mu)}$ are the matrix
coefficients of the transition matrix from a certain {\em mirabolic
Hall-Littlewood} basis of $\Lambda\otimes\Lambda$ (introduced
in~\cite{Sh}) to the Schur basis, 
see~\ref{fast}. Recall that $\Lambda$ is isomorphic to the
Hall algebra~\cite{Mac} whose natural basis goes to the basis of
Hall-Littlewood polynomials. Similarly, $\Lambda\otimes\Lambda$ is
naturally isomorphic to a certain {\em mirabolic Hall bimodule} over
the Hall algebra, and then the natural basis of this bimodule goes to
the mirabolic Hall-Littlewood basis, see section~\ref{Hall}.
The structure constants of this basis, together with Green's formula
for the characters of $\GL_N(\Fq)$, enter the computation of the
Frobenius traces of the previous paragraph.

The Hall algebra is also closely related to the spherical Hecke
algebra
$\bH^\sph$ of $\GL_N$ (the convolution algebra of the affine Grassmannian
of $\GL_N$). Similarly, the mirabolic Hall bimodule is closely related
to a certain {\em spherical mirabolic bimodule} over $\bH^\sph$,
defined in terms of convolution of the affine Grassmannian and the
{\em mirabolic affine Grassmannian}, see section~\ref{Grass}.
The geometry of the mirabolic affine Grassmannian is a particular case
of the geometry of the {\em mirabolic affine flag variety} studied in
section~\ref{Flaff}. Both geometries are (mildly) semiinfinite.

Thus all the results of this note are consequences of a single 
guiding principle which may be loosely stated as follows: the
mirabolic substances form a bimodule over the classical ones; this
bimodule is usually free of rank one.

However, the affine mirabolic bimodule $\CR^\aff$ over the affine
Hecke algebra $\CH^\aff$ is not free (see Remark~\ref{hissing}).
Recall that $\CH^\aff$ can be realized in the
equivariant $K$-homology of the Steinberg variety. It would be very
interesting to find a similar realization of $\CR^\aff$. 

Finally let us mention that the results of this note are very closely related
to the results of~\cite{AH}, though our motivations are rather different.
The authors of~\cite{AH} were primarily interested in the geometry of 
enhanced nilpotent cone. They proved the parity vanishing of the IC stalks
of the orbit closures in the enhanced nilpotent cone, and identified the
generating functions of these stalks with Shoji's type-B Kostka polynomials.
Since the Schubert varieties in the mirabolic affine Grassmannian are
equisingular to the orbit closures in the enhanced nilpotent cone
(see~\ref{Lusztig},~\ref{Mir}), 
the appearance of Shoji's polynomials in the spherical mirabolic bimodule
and in the mirabolic Hall bimodule is an immediate corollary of~\cite{AH}.

\subsection{Acknowledgments}
%We are deeply obliged to V.~Ginzburg who initiated the study of
%mirabolic character sheaves. 
We are indebted to G.~Lusztig for his
question about classification of mirabolic character sheaves over a
finite field, and also for bringing~\cite{Sol} to our attention.
We are obliged to P.~Achar and A.~Henderson for sending
us~\cite{AH} before its publication, and bringing~\cite{Sh} to our attention.
We are grateful to D.~Zakharov and R.~Yang for correction of the conditions
in~Propositions~\ref{aff thm_expl} and~\ref{aff thm_explicit} in the earlier version of our note.
It is a pleasure to thank V.~Lunts for his hospitality during our work
on this project. M.F.\ is also grateful to the Universit{\'e} Paris VI and IAS
for the hospitality and support; he was partially supported by the
Oswald Veblen Fund.
The work of V.G.\ was  partially supported by the NSF grant DMS-0601050.

\section{Mirabolic affine flags}
\label{Flaff}

\subsection{Notations}
We set $\bF=\sk((t)),\ \bO=\sk[[t]]$. Furthermore, $G=\GL(V)$, and
$\GF=G(\bF),\ \GO=G(\bO)$. The affine Grassmannian $\Gr=\GF/\GO$.
We fix a flag $F_\bullet\in\Fl(V)$, and its stabilizer Borel subgroup
$B\subset G$; it gives rise to an Iwahori subgroup $\bI\subset\GO$.
The affine flag variety $\Flaff=\GF/\bI$. We set $\bV=\bF\otimes_\sk V$, and
$\bVo=\bV-\{0\}$, and $\Pinf=\bVo/\sk^\times$. 

It is well known that the $\GF$-orbits in $\Flaff\times\Flaff$ are numbered
by the affine Weyl group $\gS_N^\aff$ formed by all the permutations $w$ 
of $\BZ$ such that $w(i+N)=w(i)+N$ for any $i\in\BZ$ ({\em periodic}
permutations). Namely, for a basis $\{e_1,\ldots,e_N\}$ of $V$ we set
$e_{i+Nj}:=t^{-j}e_i,\ i\in\{1,\ldots,N\},\ j\in\BZ$; then the following pair
$(F^1_\bullet,F^2_\bullet)$ of periodic flags of $\bO$-sublattices in $\bV$
lies in the orbit $\BO_w\subset\Flaff\times\Flaff$:

\begin{equation}
\label{first}
F^1_k=\langle e_k,e_{k-1},e_{k-2},\ldots\rangle,
F^2_k=\langle e_{w(k)},e_{w(k-1)},e_{w(k-2)},\ldots\rangle.
\end{equation}
(it is understood that $e_k,e_{k-1},e_{k-2},\ldots$ is a {\em topological}
basis of $F^1_k$).

Following~\cite{T},~Lemma~2, we define $RB^\aff$ as the set of pairs
$(w,\beta)$ where $w\in\gS_N^\aff$, and $\beta\subset\BZ$ such that if 
$i\in\BZ-\beta$, and $j\in\beta$, then either $i>j$ or $w(i)>w(j)$;
moreover, any $i\ll0$ lies in $\beta$, and any $j\gg0$ lies in $\BZ-\beta$.

\subsection{$\GF$-orbits in $\Flaff\times\Flaff\times\Pinf$}
The following proposition is an affine version of~\cite{Sol}, see also~\cite[2.11]{MWZ}.

\begin{prop}
\label{aff mwz}
There is a one-to-one correspondence between the set of $\GF$-orbits in
$\Flaff\times\Flaff\times\bVo$ (equivalently, in 
$\Flaff\times\Flaff\times\Pinf$) and $RB^\aff$.
\end{prop}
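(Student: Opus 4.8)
The plan is to reduce the affine statement to the finite-dimensional theorem of Magyar--Weyman--Zelevinsky / Solomon cited above, by exploiting the fact that each $\GF$-orbit closure is controlled by finite-dimensional data once one passes to a large enough quotient lattice. First I would observe that, given a triple $(F^1_\bullet, F^2_\bullet, v) \in \Flaff\times\Flaff\times\bVo$, there is an integer $k \ll 0$ with $v \in F^1_k \cap F^2_k$ (both flags being periodic lattices exhausting $\bV$), and likewise an integer $\ell \gg 0$ with $F^1_\ell, F^2_\ell \subset$ some fixed coordinate lattice; so on the "window" between level $\ell$ and level $k$ the data becomes a genuine triple consisting of two complete flags in the finite-dimensional $\sk$-vector space $F^1_k/F^1_\ell$ (after identifying it suitably with $F^2_k/F^2_\ell$ via a lattice in general position) together with the line $\sk^\times v$. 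The $\GF$-orbit of the triple is then determined by the $\GL$-orbit of this finite picture together with the relative position class of the two periodic lattices alone, i.e.\ the underlying element $w \in \gS_N^\aff$.

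The key steps, in order, are: (i) normalize the first flag to the standard periodic flag $F^1_\bullet$ of~\eqref{first} using transitivity of $\GF$ on $\Flaff$; (ii) having done so, the residual group is $\bI$ (the Iwahori), and the relative position of $F^2_\bullet$ is the Bruhat datum $w$, so the second flag may be taken to be the periodic flag attached to $w$ as in~\eqref{first}, with residual stabilizer $\bI \cap w\bI w^{-1}$; (iii) analyze the remaining freedom: the vector $v$ ranges over $\bVo$ modulo the action of $\bI \cap w\bI w^{-1}$ and scaling, and one must show the resulting orbits are indexed precisely by the subsets $\beta\subset\BZ$ satisfying the compatibility condition in the definition of $RB^\aff$ (the "staircase" condition $i\in\BZ-\beta,\ j\in\beta \Rightarrow i>j$ or $w(i)>w(j)$, plus the stabilization at $\pm\infty$); (iv) match this with~\cite{T}, Lemma~2, where exactly the pairs $(w,\beta)\in RB^\aff$ were introduced to parametrize $\GF$-orbits on $\Flaff\times\Flaff\times\Pinf$ under $B$-, respectively $\bI$-actions, so that the bookkeeping of step (iii) is already available. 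Concretely, $\beta$ records, for the expression of $v$ in the topological basis $\{e_{w(k)}, e_{w(k-1)},\dots\}$ adapted to $F^2_\bullet$, the set of indices that can be cleared by the unipotent part of $\bI\cap w\bI w^{-1}$; the compatibility condition is exactly the condition that the leading term not be removable by either flag's stabilizer, and the boundary conditions reflect $v \in F^1_k\cap F^2_k$ for $k\ll0$ and $v\notin tF^1_\ell + tF^2_\ell$ normalization for $\ell\gg0$. I would then check well-definedness (independence of the window $[\ell,k]$) and injectivity by a direct comparison of the normal forms.

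The main obstacle I expect is step (iii)/(iv): proving that the orbit is genuinely \emph{determined} by $(w,\beta)$ and not just mapped to it — i.e.\ that two triples with the same $(w,\beta)$ are $\GF$-conjugate. In the finite-dimensional setting this is the content of the (nontrivial) normal-form argument of~\cite{Sol},~\cite{MWZ}; here one must additionally control the semiinfinite tails, checking that the approximations at finite level $k\ll0$ are compatible and that the unipotent radical of $\bI\cap w\bI w^{-1}$ acts with the expected orbits on the relevant affine space of vectors. The cleanest route is probably to fix a window large enough that all the data lives in $F^1_k/t^mF^1_k$ for suitable $k,m$, apply the finite MWZ/Solomon classification there to a triple of (partial, periodic) flags, and then pass to the limit, noting that the $RB^\aff$-condition is exactly the stable limit of the finite combinatorial indexing set; the stabilization conditions "$i\ll0 \Rightarrow i\in\beta$" and "$j\gg0\Rightarrow j\in\BZ-\beta$" guarantee the limit is attained at finite level and that no information is lost. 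The parenthetical equivalence (orbits on $\Flaff\times\Flaff\times\bVo$ versus on $\Flaff\times\Flaff\times\Pinf$) is immediate since the $RB^\aff$-datum is manifestly scaling-invariant: $\beta$ depends only on the line $\sk^\times v$, not on $v$ itself.
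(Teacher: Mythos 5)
Your core steps (ii)--(iv) --- normalize the first flag, reduce to the Iwahori picture, analyze the residual action of $\bI\cap w\bI w^{-1}$ on $\bVo$, and match with the combinatorics of $RB^\aff$ --- are exactly the shape of the paper's argument. The paper's own proof is a one-line reference: the argument is a direct affine adaptation of the normal-form proof of Lemma~2 of~\cite{T}, and the only thing spelled out is the explicit orbit representative $v=\sum_{k\in\beta}e_k$. Your outer framing, though, is a genuinely different route: you propose to \emph{truncate} to a finite window, invoke the finite-dimensional classification of~\cite{Sol}, \cite{MWZ}, and pass to a limit. That route has a real gap as stated. The results of~\cite{Sol}, \cite{MWZ} classify $\GL_n$-orbits in $\Fl\times\Fl\times V$ for the full group $\GL_n$ acting on two arbitrary complete flags; after truncating the affine data to a window $L\subset L'$, the induced flags on $L'/L$ are periodic under the residual $t$-action and the group acting is not $\GL(L'/L)$ but the image of $\GF$ (respectively of a parahoric), which is a proper parabolic. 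So the finite theorem does not apply off the shelf; one must argue separately that the periodicity constraints and the restricted group action produce the same combinatorial answer. Your third paragraph acknowledges this is ``the main obstacle,'' but the proposed fix (``apply the finite MWZ/Solomon classification... to a triple of (partial, periodic) flags'') is exactly what those references do not cover. The paper sidesteps all of this by running the normal-form argument directly in the affine setting, which is also what your steps (ii)--(iv) describe --- I would drop the truncation scaffolding and carry out the affine normal form directly.

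One small sign error to fix: $\beta$ records the indices that \emph{cannot} be cleared (those surviving in the normal form), not ``the set of indices that can be cleared by the unipotent part of $\bI\cap w\bI w^{-1}$''; concretely $v=\sum_{k\in\beta}e_k$ in the basis adapted to $F^1_\bullet$, and the $RB^\aff$-condition on $\beta$ is precisely the statement that no element of the common stabilizer of the two flags can remove any component indexed by $\beta$.
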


\begin{proof} The argument is entirely similar to the proof of~Lemma~2 
of~\cite{T}. It is left to the reader. We only mention that a representative
of an orbit corresponding to $(w,\beta)$ is given by 
$(F^1_\bullet,F^2_\bullet,v)$ where $(F^1_\bullet,F^2_\bullet)$ are as 
in~(\ref{first}), and $v=\sum_{k\in\beta}e_k$ (note that this infinite sum
makes sense in $\bV$). \end{proof}

\subsection{The mirabolic bimodule over the affine Hecke algebra}
Let $\sk=\Fq$, a finite field with $q$ elements. Then the affine Hecke
algebra of $G$ is the endomorphism algebra of the induced module
$H^\aff:=\End_\GF(\on{Ind}_\bI^\GF\BZ)$. It has the standard basis $\{T_w,\
w\in\gS_N^\aff\}$, and the structure constants are polynomial in $q$,
so we may and will view $H^\aff$ as the specialization under
$\bq\mapsto q$ of a $\BZ[\bq,\bq^{-1}]$-algebra $\bH^\aff$.
Clearly, $H=\End_\GF(\on{Ind}_\bI^\GF\BZ)$
coincides with the convolution ring of $\GF$-invariant functions on
$\Flaff\times\Flaff$. 

It acts by the right and left convolution on the
bimodule $R^\aff$ of $\GF$-invariant functions on
$\Flaff\times\Flaff\times\bVo$. For $\tw\in RB^\aff$ let
$T_\tw\in R^\aff$ stand for the characteristic function of the
corresponding orbit in $\Flaff\times\Flaff\times\bVo$. 
Note that the involutions 
$(F^1_\bullet,F^2_\bullet)\invl (F^2_\bullet,F^1_\bullet)$ and 
$(F^1_\bullet,F^2_\bullet,v)\invl (F^2_\bullet,F^1_\bullet,v)$ 
induce anti-automorphisms of the
algebra $\bH^\aff$ and the bimodule of $\GF$-invariant functions on
$\Flaff\times\Flaff\times\bVo$. These anti-automorphisms send
$T_w$ to $T_{w^{-1}}$ and $T_\tw$ to $T_{\tw^{-1}}$ where 
$\tw^{-1}=(w^{-1},w(\b))$ for $\tw=(w,\b)$.

We are going to describe the right action of $H^\aff$ on the
bimodule $R^\aff$ in the basis $\{T_\tw,\ \tw\in RB^\aff\}$ (and then the
formulas for the left action would follow via the above
anti-automorphisms). To this end recall that $H^\aff$ is generated
by $T_{s_1},\ldots,T_{s_N},T^{\pm1}_\tau$ where $T_{s_i}$ is the characteristic
function of the orbit formed by the pairs $(F^1_\bullet,F^2_\bullet)$
such that $F^1_j\ne F^2_j$ iff $j=i\pmod{N}$; and $\tau(k)=k+1,\ k\in\BZ$.
Evidently, $T_\tw T^{\pm1}_\tau=T_{\tw[\pm1]}$ where $\tw[\pm1]$ 
is the shift of $\tw$ by $\pm1$.
The following proposition is an affine version of~Proposition~2
of~\cite{T}, and the proof is straightforward.

\begin{prop}\label{aff thm_expl}
Let $\tw=(w,\b)\in RB^\aff$ and let $s=s_i\in\gS_N^\aff$, $i\in\{1,\dots,N\}$. Let $\s=\s(\tw)$ 
and $\s'=\s(\tw s)$ be given by the formula~(6) of~\cite{T}. 
Denote $\tw s = (ws, s(\b))$, and $\tw'=(w,\b\btu\{\sigma_i+1\})$, and $(\tilde{w}s)'=(ws,s(\beta)\setminus\{\sigma_{i+1}\})$,
where $\sigma_i$ is the maximal element in $\beta$ congruent to $i$ modulo $N$. Then
{\footnotesize\rm \eql{aff eq_expl}{  T_\tw T_s =
\begin{cases}
  T_{\tw s} &\text{if $ws > w$ and $\sigma_{i+1}\not\in\sigma'$,} \\
    %$\not\exists k\in\BZ$ such that $kN+\iota\subset\sigma'$,} \\
    %$i+1 \not\in \s'$,}  \\
T_{\tw s} + T_{(\tw s)'} &
\text{if $ws > w$ and $\sigma_{i+1}\in\sigma'$,} \\
  %$\exists k\in\BZ$ such that $kN+\iota\subset\sigma'$,} \\
  %$i+1 \in \s'$,}  \\
T_{\tw'} + T_{\tw's} &
\text{if $ws < w$ and $\sigma_i+1\not\in\beta$,} \\
  %$\b \cap \iota = \{i\}$,}  \\
(q-1) T_\tw + q T_{\tw s} &
\text{if $ws < w$ and $\sigma_i+1\in\beta\setminus\sigma$,} \\
  %$i \not\in \s$,}  \\
(q-2) T_\tw + (q-1) (T_{\tw'} + T_{\tw s}) &
\text{if $ws < w$ and $\sigma_i+1\in\sigma$}. \\
  %$\iota \subset \s$}  \\
\end{cases} }}
\end{prop}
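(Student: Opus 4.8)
The plan is to reduce everything to a rank-one calculation in the minimal parabolic $\langle s_i\rangle$-double coset, exactly as in the proof of Proposition~2 of~\cite{T}, and then to keep careful track of the extra data $\beta$ (equivalently, the vector $v$). First I would fix a representative $(F^1_\bullet,F^2_\bullet,v)$ of the orbit $\tw=(w,\b)$ as in Proposition~\ref{aff mwz}, namely the periodic flags of~(\ref{first}) together with $v=\sum_{k\in\b}e_k$. By definition the convolution $T_\tw T_s$ evaluated at a triple $(F^1_\bullet,F''_\bullet,v)$ counts the $\Fq$-points of the set of periodic flags $F'_\bullet$ with $(F^1_\bullet,F'_\bullet,v)$ in orbit $\tw$ and $(F'_\bullet,F''_\bullet)$ in orbit $s$. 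Since $F'_\bullet$ and $F''_\bullet$ differ only in the single step $j\equiv i\pmod N$, this is the usual $\mathbb{P}^1$ of lines in a two-dimensional quotient lattice $F^1_{\,\sigma_i}/F^1_{\,\sigma_i-1}$ (up to the periodicity shift); the only new feature is to decide, for each such line, which orbit the triple $(F^1_\bullet,F'_\bullet,v)$ lands in, i.e.\ how $\beta$ changes.

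The combinatorial heart is therefore the following local analysis. Let $\sigma_i$ denote the largest element of $\beta$ in the residue class $i$, and consider the position of $\sigma_i+1$: whether it lies in $\beta$, and if so whether it equals some $\sigma_j$ (a ``last'' element of its class). The five cases in~(\ref{aff eq_expl}) correspond precisely to the sign of $ws$ versus $w$ (which controls whether the generic line gives a bigger or the same $w$-orbit, exactly as in the nonmirabolic affine Hecke algebra) crossed with these possibilities for $\sigma_i+1$. For each case I would:
\begin{itemize}
\item identify which lines in the $\mathbb{P}^1$ are ``special'' — these are the lines along which $v$, viewed in the relevant two-dimensional subquotient, degenerates so that the pair $(\beta\text{-data})$ jumps — using the incidence conditions defining $RB^\aff$;
\item count them: typically one special line, giving $T_{\tw s}+T_{(\tw s)'}$ or $T_{\tw'}+T_{\tw's}$ when $ws>w$ or $ws<w$ respectively; and when $ws<w$ one also gets the ``staying'' contribution supported on $\tw$ itself, weighted by $q-1$ or $q-2$ according to whether $\sigma_i+1$ is a non-last or last element of $\beta$;
\item read off the multiplicities: $q$ generic lines contribute $T_{\tw s}$, the one special line (if present) contributes the modified term, and the remaining lines reproduce $T_\tw$, which accounts for the $(q-1)$, $q$, $(q-2)$, $(q-1)$ coefficients.
\end{itemize}

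Concretely, the ``$T_\tw$'' contributions arise because when $ws<w$ a positive-dimensional subfamily of lines reproduces the same triple-orbit $\tw$; among those $q$ or so lines, one must further remove the line(s) where the $\beta$-data degenerates. This bookkeeping is cleanest if one works with the explicit combinatorics of $\sigma=\sigma(\tw)$ and $\sigma'=\sigma(\tw s)$ as defined by formula~(6) of~\cite{T}, since those already encode how $w$ interacts with the marked positions; the mirabolic correction is then a single extra comparison of $\sigma_i+1$ against the sets $\beta$, $\sigma$, $\sigma'$, and the conditions displayed in the five cases are exactly what comes out. The main obstacle, as usual in these semiinfinite settings, is not any deep geometry but the sign/indexing discipline: making sure that ``$\sigma_i$ is the maximal element of $\beta$ congruent to $i$'' is consistently used for the right residue class on each side of the convolution (note $i$ versus $i+1$ in the statement), that the shift conventions $\tw[\pm1]$ and the passage $(ws,s(\beta))$ are applied correctly, and that the finitely many special lines are neither double-counted nor missed. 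Once the local picture in the rank-one parabolic is pinned down, the general formula follows by the standard observation that convolution with $T_{s_i}$ only touches the step $j\equiv i\pmod N$, so the other components of the flags and of $\beta$ ride along unchanged.
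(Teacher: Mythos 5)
Your proposal follows exactly the strategy the paper intends: the paper's proof is just the remark that this is an affine version of Proposition~2 of~\cite{T} and that the computation is straightforward, and your plan — reduce to the rank-one $\mathbb{P}^1$-counting in the relevant two-dimensional subquotient, then track how the marking $\beta$ (equivalently the vector $v$) changes as the line varies, separating the generic lines from the one special line — is precisely that computation. The two accounts match in approach and in the resulting five-case bookkeeping.
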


\subsection{Modified bases}
The formulas~(\ref{aff eq_expl}) being polynomial in $q$, 
we may (and will) view the $H^\aff$-bimodule $R^\aff$ as the 
specialization under $\bq\mapsto q$ of the  
$\BZ[\bq,\bq^{-1}]$-bimodule $\bR^\aff$ over the 
$\BZ[\bq,\bq^{-1}]$-algebra $\bH^\aff$.
We consider a new variable $\bv,\ \bv^2=\bq$, and extend the scalars to 
$\BZ[\bv,\bv^{-1}]:\ 
\CH^\aff:=\BZ[\bv,\bv^{-1}]\otimes_{\BZ[\bq,\bq^{-1}]}\bH^\aff;\
\CR^\aff:=\BZ[\bv,\bv^{-1}]\otimes_{\BZ[\bq,\bq^{-1}]}\bR^\aff$.

Recall the basis $\{H_w:=(-\bv)^{-\ell(w)}T_w\}$ of $\CH^\aff$ 
(see e.g.~\cite{So}),
and the Kazhdan-Lusztig basis $\{\utH_w\}$ ({\em loc. cit.}); 
in particular, for $s_i\ (i=1,\ldots,N),\ \utH_{s_i}=H_{s_i}-\bv^{-1}$.
For $\tw=(w,\beta)\in RB^\aff$, we denote by $\ell(\tw)$ the sum
$\ell(w)+\ell(\beta)$ where $\ell(w)$ is the standard length function
on $\gS_N^\aff$, and $\ell(\beta)=\sharp(\beta\setminus\{-\BN\})-
\sharp(\{-\BN\}\setminus\beta)$. 
We introduce a new basis $\{H_\tw:=(-\bv)^{-\ell(\tw)}T_\tw\}$ of $\CR^\aff$.
In this basis the right action of the Hecke algebra generators $\utH_{s_i}$
takes the form:

\begin{prop}
  \label{aff thm_explicit}
Let $\tw=(w,\b)\in RB^\aff$ and let $s=s_i\in\gS_N^\aff$, $i\in\{1,\dots,N\}$. Let $\s=\s(\tw)$ 
and $\s'=\s(\tw s)$ be given by the formula~(6) of~\cite{T}. 
Denote $\tw s = (ws, s(\b))$, and $\tw'=(w,\b\btu\{\sigma_i+1\})$, and $(\tilde{w}s)'=(ws,s(\beta)\setminus\{\sigma_{i+1}\})$,
where $\sigma_i$ is the maximal element in $\beta$ congruent to $i$ modulo $N$. Then  
%Let $\tw=(w,\b)\in RB^\aff$ and let $s=s_i\in\gS_N^\aff$, $i\in\{1,\dots,N\}$. 
%Denote $\tw s = (ws, s(\b))$ and $\tw'=(w,\b\btu\{i+1\})$. Let $\s=\s(\tw)$ 
%and $\s'=\s(\tw s)$ be given by the formula~(6) of~\cite{T}. Then

{\footnotesize\rm \eql{aff eq_explicit}{  H_\tw\utH_s =
\begin{cases}
  H_{\tw s}-\bv^{-1}H_\tw &\text{if $ws > w$ and $\sigma_{i+1}\not\in\sigma'$,} \\
    %$i+1 \not\in \s'$,}  \\
H_{\tw s} - \bv^{-1}H_{(\tw s)'}-\bv^{-1}H_\tw &
\text{if $ws > w$ and $\sigma_{i+1}\in\sigma'$,} \\
  %$i+1 \in \s'$,}  \\
H_{\tw'}-\bv^{-1}H_\tw-\bv^{-1}H_{\tw's} &
\text{if $ws < w$ and $\sigma_i+1\not\in\beta$,} \\
  %$\b \cap \iota = \{i\}$,}  \\
H_{\tw s}-\bv H_\tw &
\text{if $ws < w$ and $\sigma_i+1\in\beta\setminus\sigma$,} \\
  %$i \not\in \s$,}  \\
(\bv^{-1}-\bv)H_\tw + (1-\bv^{-2})(H_{\tw'} + H_{\tw s}) &
\text{if $ws < w$ and $\sigma_i+1\in\sigma$}. \\
  %$\iota \subset \s$}  \\
\end{cases} }}
\end{prop}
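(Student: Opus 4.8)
The plan is to deduce Proposition~\ref{aff thm_explicit} directly from Proposition~\ref{aff thm_expl} by a purely bookkeeping change of basis, with no new geometry required. First I would recall that $H_\tw=(-\bv)^{-\ell(\tw)}T_\tw$, $\utH_{s}=H_{s}-\bv^{-1}=(-\bv)^{-1}T_{s}-\bv^{-1}$, and, crucially, that $\ell(\tw s)=\ell(ws)+\ell(s(\b))$ while $\ell(ws)=\ell(w)\pm1$ according as $ws>w$ or $ws<w$. I would also record how $\ell(\b)$ behaves under the two moves that occur in the formulas: passing from $\b$ to $\b'=\b\btu\{\s_i+1\}$ changes $\ell(\b)$ by $+1$ when $\s_i+1\notin\b$ (we are adjoining an element) and by $-1$ when $\s_i+1\in\b$ (we are deleting it); similarly $\ell(s(\b))=\ell(\b)$ since $s$ is a simple reflection acting on $\BZ$ and the symmetric-difference count defining $\ell(\b)$ is preserved, except in the one case where $s(\b)$ differs from $\b$ at the pair $\{\s_i,\s_i+1\}$. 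Assembling these length computations case by case is the bulk of the work, and it is the step I expect to be the main obstacle: one must track, in each of the five cases of~(\ref{aff eq_expl}), exactly which of $\ell(w),\ell(\b)$ jump by $\pm1$ so that the prefactors $(-\bv)^{-\ell(-)}$ combine correctly.

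Concretely, I would take each line of~(\ref{aff eq_expl}) in turn, substitute $T_\tw=(-\bv)^{\ell(\tw)}H_\tw$ (and likewise for every $T$ on the right-hand side), multiply through by $(-\bv)^{-\ell(\tw)-1}$ — which is the scalar relating $T_\tw T_{s}$ to $H_\tw H_{s}$ — and then subtract $\bv^{-1}H_\tw$ to convert $H_\tw H_{s}$ into $H_\tw\utH_{s}$. In the first case, $T_\tw T_{s}=T_{\tw s}$ with $\ell(\tw s)=\ell(\tw)+1$, so $(-\bv)^{-\ell(\tw)-1}T_{\tw s}=H_{\tw s}$, giving $H_\tw H_{s}=H_{\tw s}$ and hence $H_\tw\utH_{s}=H_{\tw s}-\bv^{-1}H_\tw$, matching the claim. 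In the second case the extra term $T_{(\tw s)'}$ has $(\tw s)'=(ws,s(\b)\setminus\{\s_{i+1}\})$, so $\ell((\tw s)')=\ell(\tw s)-1=\ell(\tw)$, and its prefactor produces $(-\bv)^{-\ell(\tw)-1}T_{(\tw s)'}=(-\bv)^{-1}H_{(\tw s)'}=-\bv^{-1}H_{(\tw s)'}$, which is exactly the sign pattern asserted. The third and fifth cases ($ws<w$) are where the length of $w$ drops, so $\ell(\tw)$ or $\ell(\tw')$ must be recomputed with care; here I would use $\ell(\tw')=\ell(\tw)+1$ (since $\s_i+1\notin\b$ in case three) and $\ell(\tw s)=\ell(\tw)-1$, and then verify that the factor $(q-1)=(\bv^2-1)$, resp. $(q-2)=(\bv^2-2)$, combines with the powers of $(-\bv)$ to give precisely $\bv^{-1}-\bv$ and $(1-\bv^{-2})$ coefficients after the $-\bv^{-1}H_\tw$ correction; the case-three entry $H_{\tw'}-\bv^{-1}H_\tw-\bv^{-1}H_{\tw's}$ likewise follows once one notes $\ell(\tw's)=\ell(\tw')-1=\ell(\tw)$.

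Finally I would remark that, just as in the unnormalized setting, the left-action formulas are obtained by applying the anti-automorphism $\tw\mapsto\tw^{-1}$, so only the right action needs to be verified; and that the statement is an identity of polynomials in $\bv$, so it suffices to check it after $\bv^2\mapsto q$, i.e.\ in $R^\aff$ itself, which is exactly what Proposition~\ref{aff thm_expl} supplies. The only genuinely delicate point — beyond the length bookkeeping — is to be sure that the combinatorial conditions distinguishing the five cases ($\s_{i+1}\in\s'$ or not, $\s_i+1\in\b$ or not, $\s_i+1\in\s$ or not) are literally the same in both propositions, which they are by construction, so no re-derivation of the case division is needed. The verification is therefore routine but somewhat tedious; the substance lies entirely in the length identities recalled in the first paragraph.
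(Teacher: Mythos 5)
Your proof is correct and matches the approach the paper leaves implicit: Proposition~\ref{aff thm_explicit} is stated there without proof, being the mechanical rewriting of Proposition~\ref{aff thm_expl} in the renormalized basis $H_\tw=(-\bv)^{-\ell(\tw)}T_\tw$ via exactly the length identities $\ell(\tw s)=\ell(ws)+\ell(s(\b))$, $\ell(ws)=\ell(w)\pm1$, and $\ell(\b\btu\{\s_i+1\})=\ell(\b)\pm1$ that you set out and then track case by case. One small refinement: $\ell(s(\b))=\ell(\b)$ holds unconditionally (a swap $a\leftrightarrow a+1$ either stays entirely within or outside $-\BN$, and in the one boundary instance changes both counts in the definition of $\ell(\b)$ by the same amount), so the caveat you flag about the pair $\{\s_i,\s_i+1\}$ is superfluous -- consistent with the fact that your computation never actually invokes such an exception.
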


\subsection{Generators}
\label{kissing}
We consider the elements $\tw_{i,j}=(\tau^j,\beta_i)\in RB^\aff$ such that
$w=\tau^j$ (the shift by $j$), and
$\beta_i=\{i,i-1,i-2,\ldots\}$, for any $i,j\in\BZ$. 
The following lemma is proved exactly as~Corollary~2 of~\cite{T}.

\begin{lem}
\label{missing}
$\CR^\aff$ is generated by $\{\tw_{i,j},\ i,j\in\BZ\}$
as a $\CH^\aff$-bimodule.
\end{lem}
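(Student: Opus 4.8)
The plan is to reduce the statement to a purely combinatorial claim about $RB^\aff$ and then feed the base case and inductive step to Proposition~\ref{aff thm_expl} (equivalently its normalized version~\ref{aff thm_explicit}). First I would observe that $\CR^\aff$ is a free $\BZ[\bv,\bv^{-1}]$-module with basis $\{H_\tw\}_{\tw\in RB^\aff}$, so it suffices to show that every basis element $H_\tw$, $\tw=(w,\beta)$, lies in the $\CH^\aff$-sub-bimodule $\CR_0$ generated by the $\tw_{i,j}$. Since right multiplication by $T^{\pm1}_\tau$ just shifts $\tw$, the elements $\tw_{i,j}$ together with these shifts already produce $(w,\beta)$ for all $w\in\langle\tau\rangle$ and all ``staircase'' $\beta=\beta_i$; so the real content is to generate arbitrary $(w,\beta)$ from these using the $\utH_{s_i}$.

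Next I would set up the induction exactly as in~Corollary~2 of~\cite{T}: order $RB^\aff$ by the length $\ell(\tw)=\ell(w)+\ell(\beta)$ (or a suitable refinement of it), and argue that for any $\tw$ with $\ell(\tw)>0$ which is not already among the generators, there is a simple reflection $s=s_i$ such that multiplying a strictly shorter element of $\CR_0$ by $\utH_{s_i}$ yields $H_\tw$ plus a $\BZ[\bv,\bv^{-1}]$-combination of strictly shorter $H_{\tw''}$. Concretely: if $ws<w$ (a descent on the $w$-factor), or if $\beta$ can be shortened at the relevant residue, one of the first three cases of~(\ref{aff eq_explicit}) applies to $\tw s$ or to $\tw'$ and expresses $H_\tw$ as $(H_{(\text{shorter})})\utH_{s_i}$ minus lower terms; one then concludes $H_\tw\in\CR_0$ by the inductive hypothesis applied to all the shorter elements appearing. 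The bookkeeping is to check that the ``shorter'' elements really are of strictly smaller length in all five cases of the proposition — this is the same verification that underlies the analogous finite-type statement in~\cite{T}, and it goes through because each application of $\utH_{s_i}$ changes $\ell(\tw)$ by $\pm1$ on the $w$-part and by at most $1$ on the $\beta$-part.

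The step I expect to be the main obstacle is the combinatorial reduction on the $\beta$-component: unlike the group case, $\beta$ is an infinite subset of $\BZ$ and the five-case formula mixes moves on $w$ with moves on $\beta$ (the passage $\beta\btu\{\sigma_i+1\}$ and $s(\beta)\setminus\{\sigma_{i+1}\}$), so I must make sure that starting from a staircase $\beta_i$ I can reach an arbitrary admissible $\beta$ — i.e. that the ``add/remove the top box in residue $i$'' operations, interleaved with the $s_i$ and with $\tau$-shifts, act transitively on the set of admissible $\beta$ for fixed combinatorial type. This is where one uses the defining inequalities of $RB^\aff$ (the condition ``$i\in\BZ-\beta,\ j\in\beta\Rightarrow i>j$ or $w(i)>w(j)$'', plus the stabilization $i\ll0\Rightarrow i\in\beta$, $j\gg0\Rightarrow j\notin\beta$) to guarantee that at each stage an admissible shortening move is available. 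Once that transitivity is in hand, the induction closes and $\CR^\aff=\CR_0$, proving the lemma. I would simply remark that all of this is ``proved exactly as Corollary~2 of~\cite{T}'' and leave the now-routine verification to the reader, as the authors evidently intend.
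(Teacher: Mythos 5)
Your approach---an induction on a length statistic, feeding the base case and the inductive step into the five-case formula of Proposition~\ref{aff thm_expl}/\ref{aff thm_explicit}---is exactly what the paper intends: its entire proof is the single remark that the lemma ``is proved exactly as Corollary~2 of~\cite{T},'' so a sketch at your level of detail is all one can compare against, and the route you choose is the natural one.

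There is, however, one genuine imprecision in your setup that is worth fixing. You propose to induct ``on $\ell(\tw)=\ell(w)+\ell(\beta)$, for any $\tw$ with $\ell(\tw)>0$,'' but $\ell(\tw)$ is \emph{not} bounded below on $RB^\aff$, nor even on a fixed connected component $RB^\aff_i$: the generators $\tw_{i,j}$ themselves have $\ell(\tw_{i,j})$ taking all values in $\BZ$, and for a fixed $w$ the admissible $\beta$'s can be shifted so that $\ell(\beta)$ ranges unboundedly in both directions. So ``$\ell(\tw)>0$'' does not isolate a well-founded induction. The right primary measure is $\ell(w)\in\BZ_{\geq0}$ alone. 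The base case is then $\ell(w)=0$, i.e.\ $w=\tau^j$; and here the defining condition of $RB^\aff$ --- for $i\in\BZ\setminus\beta$, $j'\in\beta$, either $i>j'$ or $w(i)>w(j')$ --- collapses, for $w=\tau^j$, to just $i>j'$, which forces $\beta$ to be a staircase $\beta_k$. So $\ell(w)=0$ gives $\tw=\tw_{k,j}$, a generator; the ``transitivity of moves on $\beta$'' that you flag as the main obstacle is not needed at the base, because the base case already lands exactly on the generators. For the inductive step one picks a right (or left) descent $s$ of $w$, sets $\tw_0=(ws,s(\beta))$ with $\ell(ws)=\ell(w)-1$, and reads cases 1--2 of~(\ref{aff eq_explicit}) backwards: $H_\tw = H_{\tw_0}\utH_s + \bv^{-1}H_{\tw_0}$, or the same plus a correction $\bv^{-1}H_{(\tw_0 s)'}$ with the \emph{same} $w$-part but $\beta$ diminished by one element. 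It is in disposing of these correction terms --- via a secondary argument on the $\beta$-component, interleaving left and right descents --- that the real combinatorial content lies; your instinct that this is the crux is correct, and a fully written-out proof would need to exhibit, for each non-staircase $\beta$, a reducing move on $\beta$ guaranteed by the $RB^\aff$ inequalities. Since the paper deliberately leaves this to the reader by citing~\cite{T}, your sketch is an acceptable reconstruction once the measure of induction is corrected as above.
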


\begin{Rem} 
\label{hissing}
Let $\bP_\bF\subset\GF$ be the stabilizer of a vector $v\in\bVo$.
One can see easily that $\bR^\aff|_{\bq=q}$ is isomorphic to 
$\End_{\bP_\bF}(\on{Ind}_\bI^\GF\BZ)$ as a bimodule over
$\bH^\aff|_{\bq=q}=\End_\GF(\on{Ind}_\bI^\GF\BZ)$.
Let $Z^\aff\subset\CH^\aff$ stand for the center of $\CH^\aff$. 
Let $Z^\aff_{\on{loc}}$ stand for the field of fractions of $Z^\aff$.
Let $\CH^\aff_{\on{loc}}:=\CH^\aff\otimes_{Z^\aff}Z^\aff_{\on{loc}}$.
It is known that $\CH^\aff_{\on{loc}}\simeq\on{Mat}_{N!}({\mathbb Q})
\otimes_{\mathbb Q}Z^\aff_{\on{loc}}$. Let
$\CR^\aff_{\on{loc}}:=Z^\aff_{\on{loc}}\otimes_{Z^\aff}\CR^\aff
\otimes_{Z^\aff}Z^\aff_{\on{loc}}$. Then it follows from the main
theorem of~\cite{B} that $\CR^\aff_{\on{loc}}\simeq 
Z^\aff_{\on{loc}}\otimes_{\mathbb Q}\on{Mat}_{N!}({\mathbb Q})
\otimes_{\mathbb Q}Z^\aff_{\on{loc}}$. 
%is a free $\CH^\aff_{\on{loc}}$-bimodule of rank one (with generator, say,
%$H_{\tw_{0,0}}$). 
\end{Rem}

\subsection{Geometric interpretation}
\label{geom}
It is well known that $\CH^\aff$ is the Grothendieck ring (with respect to 
convolution) of the derived constructible $\bI$-equivariant category of
Tate Weil $\overline{\mathbb Q}_l$-sheaves on $\Flaff$, 
and multiplication by $\bv$
corresponds to the twist by $\overline{\mathbb Q}_l(-\frac{1}{2})$ 
(so that $\bv$ has 
weight 1). In particular, $H_w$ is the class of the
shriek extension of $\Ql[\ell(w)](\frac{\ell(w)}{2})$ 
from the corresponding orbit $\Flaff_w$, 
and $\utH_w$ is the selfdual class of the Goresky-MacPherson extension
of $\Ql[\ell(w)](\frac{\ell(w)}{2})$ from this 
orbit. We will interpret $\CR^\aff$ in a similar vein, as the
Grothendieck group of the derived constructible $\bI$-equivariant
category of Tate Weil $\overline{\mathbb Q}_l$-sheaves on $\Flaff\times\bVo$.

To be more precise, we view $\bV$ as an indscheme (of ind-infinite
type), the union of schemes (of infinite type)
$\bV_i:=t^{-i}\sk[[t]]\otimes V,\ i\in\BZ$. 
Here $\bV_i$ is the projective limit
of the finite dimensional affine spaces $\bV_i/\bV_j,\ j<i$.
Note that $\bI$ acts on $\bV_i$ linearly (over $\sk$), and it acts on 
any quotient $\bV_i/\bV_j$ through a finite dimensional quotient
group. Thus we have the derived constructible $\bI$-equivariant
category of Weil $\overline{\mathbb Q}_l$-sheaves on 
$\Flaff\times\bV_i/\bV_j$, 
to be denoted by $D_\bI(\Flaff\times\bV_i/\bV_j)$. For $j'<j$ we have the
inverse image functor from  $D_\bI(\Flaff\times\bV_i/\bV_j)$ to
$D_\bI(\Flaff\times\bV_i/\bV_{j'})$, and we denote by
$D_\bI(\Flaff\times\bVo_i)$ the 2-limit of this system. Now for $i'>i$ we
have the direct image functor from $D_\bI(\Flaff\times\bVo_i)$ to 
$D_\bI(\Flaff\times\bVo_{i'})$, and we denote by $D_\bI(\Flaff\times\bVo)$ the
2-limit of this system. 

Clearly, $D_\bI(\Flaff)$ acts by convolution both on the left and on
the right on $D_\bI(\Flaff\times\bVo)$.

The $\bI$-orbits in $\Flaff\times\bVo$ are numbered by $RB^\aff$; for
$\tw\in RB^\aff$, the locally closed embedding of the orbit
$\Omega_\tw\hookrightarrow\Flaff\times\bVo$ is denoted by $j^\tw$.

\begin{prop}
\label{tate}
For any $\tw\in RB^\aff$, the Goresky-MacPherson sheaf
$j^\tw_{!*}\Ql[\ell(\tw)](\frac{\ell(\tw)}{2})$ is Tate.
\end{prop}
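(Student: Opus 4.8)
The plan is to establish Tateness by induction along the Bruhat-type order on $RB^\aff$, using the explicit multiplication formulas of Proposition~\ref{aff thm_explicit} together with the geometric interpretation of those formulas. First I would observe that the generators $\tw_{i,j}=(\tau^j,\beta_i)$ give orbits $\Omega_{\tw_{i,j}}$ that are themselves very simple: the periodic flag is a single $\GO$-orbit translated by $\tau^j$, so $\Flaff_{\tau^j}$ is a point, and the vector $v=\sum_{k\le i}e_k$ lies in an affine-space orbit inside a suitable $\bV_{i'}/\bV_{j'}$; hence $j^{\tw_{i,j}}_{!*}\Ql[\ell(\tw_{i,j})](\tfrac{\ell(\tw_{i,j})}{2})$ is (a shift-twist of) the constant sheaf on an affine space, which is manifestly Tate. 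This provides the base of the induction.

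Next, for the inductive step I would use Lemma~\ref{missing}: every $\tw\in RB^\aff$ is obtained from the generators by successive left and right convolution with the Hecke generators $\utH_{s_i}$ (equivalently, by convolving the IC sheaf of $\Omega_\tw$ with the IC sheaf $\utH_{s_i}$ of a minimal orbit in $\Flaff$). Geometrically, convolution with $\utH_{s_i}$ is a proper pushforward along a $\PP^1$-bundle type correspondence $\Flaff\times^{\bP_i}(\dots)$, so by the Decomposition Theorem the convolution $j^\tw_{!*}\Ql[\ell(\tw)](\tfrac{\ell(\tw)}{2})\ast\utH_{s_i}$ is a direct sum of shifts of IC sheaves of $\bI$-orbits, each appearing with a Tate multiplicity space — indeed the coefficients appearing in~(\ref{aff eq_explicit}) are precisely monomials in $\bv$, which is exactly the statement that the correction terms are Tate twists. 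Reading that formula line by line: in each of the five cases, the new IC sheaf $H_{\tw s}$ or $H_{\tw'}$ (or the summand that is being newly produced) occurs with coefficient $1$, and all other terms that occur are previously-constructed IC sheaves times powers of $\bv$; by induction those are Tate, and $\bv^{\pm 1}$ is Tate, so the whole convolution is Tate. Since the IC sheaf $j^\tw_{!*}$ we want is always the ``leading term'' (coefficient $1$) of such a convolution, with all other summands already known Tate, it follows that $j^\tw_{!*}\Ql[\ell(\tw)](\tfrac{\ell(\tw)}{2})$ is Tate, completing the induction.

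To make the induction well-founded I would order $RB^\aff$ so that in every case of~(\ref{aff eq_explicit}) all terms other than the target $H_{\tw s}$ (resp.\ $H_{\tw'}$) are strictly smaller; $\ell(\tw)=\ell(w)+\ell(\beta)$ together with the observation that the ``primed'' modifications $\tw',(\tw s)'$ and the $\tw$-itself terms all have strictly smaller $\ell$ than the target does the job, and I would check that the generators $\tw_{i,j}$ are reachable downward. The one genuinely delicate point — the main obstacle — is to verify that the algebraic recursion~(\ref{aff eq_explicit}) really is the shadow of an identity in the Grothendieck group of $D_\bI(\Flaff\times\bVo)$ at the level of \emph{perverse/IC sheaves} and not merely of their classes: i.e.\ that the convolution of the IC sheaf of $\Omega_\tw$ with $\utH_{s_i}$ decomposes as a direct sum of IC sheaves with Tate multiplicities, with no parity obstruction or hidden non-Tate summand. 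This is where the equisingularity with the enhanced nilpotent cone and the parity-vanishing results of~\cite{AH} cited in the introduction enter: since the Schubert varieties in the mirabolic affine Grassmannian (and, in the relevant local models, in $\Flaff\times\bVo$) are equisingular to the enhanced nilpotent cone orbit closures whose IC stalks are known to be pure of Tate type, the Decomposition Theorem applied to the (stratified semismall, or at worst small) convolution morphism yields only Tate summands, and the multiplicity spaces are forced to be the $\bv$-monomials read off from~(\ref{aff eq_explicit}). Once that compatibility is in place, the induction above finishes the proof.
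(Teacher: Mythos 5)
Your overall strategy is the right one and is essentially the paper's: the paper proves the proposition by induction, with the base case being the smoothness of the orbit closures $\bar\Omega_{\tw_{i,j}}$ (so their IC sheaves are constant, hence Tate), and the inductive step being a Demazure-type resolution argument, i.e.\ convolving the generator IC sheaves with $\utH_{s_i}$'s and invoking the decomposition theorem. So your geometric skeleton is correct.

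There are, however, two genuine problems in how you try to close the argument. First, you invoke the parity-vanishing results of~\cite{AH} via equisingularity with the enhanced nilpotent cone. That equisingularity is established in this paper only for Schubert varieties in the \emph{mirabolic affine Grassmannian} (the spherical setting of section~\ref{Grass}, via~\ref{Lusztig} and~\ref{Mir}), not for the $\bI$-orbit closures in $\Flaff\times\bVo$ that appear in Proposition~\ref{tate}. Indeed, immediately after Proposition~\ref{aff KL basis} the authors explicitly state that pointwise purity (and hence parity vanishing) of the sheaves $j^\tw_{!*}\Ql[\ell(\tw)](\tfrac{\ell(\tw)}{2})$ is still a \emph{conjecture} in the affine-flag case. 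You cannot borrow~\cite{AH} here; fortunately you also do not need to, since Tateness is weaker than purity and follows already from the Demazure resolution plus the decomposition theorem (pushforward of the constant sheaf on a tower of $\PP^1$-bundles over a smooth generator orbit closure is Tate, decomposes as a sum of shifted IC sheaves with vector-space multiplicities, and one peels off the lower strata by induction).

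Second, your reading of formula~(\ref{aff eq_explicit}) is not the right level at which to argue. That formula expresses $H_\tw\utH_s$ in the $H$-basis, and the $H_\tw$ are classes of the \emph{shriek} extensions $j^\tw_!\Ql[\ell(\tw)](\tfrac{\ell(\tw)}{2})$, not of the IC sheaves. The observation that the coefficients are monomials (or Laurent polynomials) in $\bv$ is an identity in the Grothendieck group of the $H$-basis and says nothing directly about whether the IC complexes themselves are Tate; in particular, ``$H_{\tw s}$'' is not an IC sheaf, so the phrase ``new IC sheaf $H_{\tw s}$'' is a category error. The correct place where Tateness of the multiplicity spaces is read off is the decomposition of the proper pushforward along the Demazure resolution, not the combinatorics of~(\ref{aff eq_explicit}). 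Once you replace both of these steps with the Demazure-resolution argument (as the paper does, following the proof of Proposition~4 of~\cite{T}), the proof goes through.
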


\begin{proof} Repeats word for word the proof of Proposition~4
of~\cite{T}. For the base of induction, we use the fact that the orbit
closure $\bar{\Omega}_{\tw_{i,j}}$ (see~\ref{kissing}) is smooth. 
For the induction step we use the Demazure type resolutions as in 
{\em loc. cit.}
\end{proof}

\subsection{The completed bimodule $\hR$}
Let $D_\bI^\Tate(\Flaff)\subset D_\bI(\Flaff)$
(resp. $D_\bI^\Tate(\Flaff\times\bVo)\subset D_\bI(\Flaff\times\bVo)$) 
stand for the full subcategory of Tate sheaves. Then
$D_\bI^\Tate(\Flaff)$ is closed under convolution, and its $K$-ring is
isomorphic to $\CH^\aff$. The proof of Proposition~\ref{tate} implies
that $D_\bI^\Tate(\Flaff\times\bVo)$ is closed under both left and
right convolution with $D_\bI^\Tate(\Flaff)$. Hence
$K(D_\bI^\Tate(\Flaff\times\bVo))$ forms an $\CH^\aff$-bimodule. This
bimodule is isomorphic to a completion $\hR$ of $\CR^\aff$ we presently
describe. 

Recall that for an $\bO$-sublattice $F\subset\bV$ its virtual
dimension is $\dim(F):=\dim(F/(F\cap(\bO\otimes V)))-\dim((\bO\otimes
V)/(F\cap(\bO\otimes V)))$. Recall that $\bI$ is the stabilizer of the
flag $F^1_\bullet$, where $F^1_k=\langle
e_k,e_{k-1},e_{k-2},\ldots\rangle$. The connected components of
$\GF/\bI=\Flaff$ are numbered by $\BZ$: a flag $F_\bullet$ lies in the
component $\Flaff_i$ where $i=\dim(F_N)$. For the same reason, the
connected components of $\Flaff\times\bVo$ are numbered by $\BZ$: a
pair $(F_\bullet,v)$ lies in the connected component
$(\Flaff\times\bVo)_i$ where $i=\dim(F_N)$. We will say $\tw\in
RB^\aff_i$ iff $\Omega_\tw\subset(\Flaff\times\bVo)_i$.
Now note that for any $i,k\in\BZ$ there
are only finitely many $\tw\in RB^\aff$ such that
$\tw\in RB^\aff_i$ and $\ell(\tw)=k$.

We define $\hR$ as the direct sum $\hR=\bigoplus_{i\in\BZ}\hR_i$, and
$\hR_i$ is formed by all the formal sums $\sum_{\tw\in RB^\aff_i}a_\tw
H_\tw$ where $a_\tw\in\BZ[\bv,\bv^{-1}]$, and $a_\tw=0$ for
$\ell(\tw)\gg0$. 
So we have $K(D_\bI^\Tate(\Flaff\times\bVo))\simeq\hR$ as an
$\CH^\aff$-bimodule, and the isomorphism takes the class
$[j^\tw_!\Ql[\ell(\tw)](\frac{\ell(\tw)}{2})]$ to $H_\tw$.

\subsection{Bruhat order}
\label{bruhat}
Following Ehresmann and Magyar (see~\cite{M}) we will define a partial
order $\tw''\leq\tw'$ on a connected component $RB^\aff_i$.
Let $(F^1_\bullet,F'_\bullet,v')$
(resp. $F^1_\bullet,F''_\bullet,v''$) be a triple in the relative
position $\tw'$ (resp. $\tw''$). For any $k,j\in\BZ$ we define
$r_{jk}(\tw'):=\dim(F^1_j\cap F'_k)$. We also define
$\delta(j,k,\tw')$ to be 1 iff $v'\in(F^1_j+F'_k)$, and 0 iff
$v'\not\in(F^1_j+F'_k)$; we set $r_{\langle
  jk\rangle}(\tw'):=r_{jk}(\tw')+\delta(j,k,\tw')$. Finally, we define
$\tw''\leq\tw'$ iff $r_{jk}(\tw'')\geq r_{jk}(\tw')$, and  
$r_{\langle jk\rangle}(\tw'')\geq r_{\langle jk\rangle}(\tw')$ for all
$j,k\in\BZ$. 

The following proposition is proved similarly to the Rank Theorem~2.2
of~\cite{M}. 

\begin{prop}
\label{magyar}
For $\tw',\tw''\in RB^\aff_i$ the orbit $\Omega_{\tw''}$ lies in the
orbit closure $\bar{\Omega}_{\tw'}$ iff $\tw''\leq\tw'$.
\end{prop}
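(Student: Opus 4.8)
The plan is to follow the classical strategy for Bruhat-order characterizations of orbit closures, as in Magyar's Rank Theorem~2.2 of~\cite{M}, adapting it to the mirabolic (triple) setting and to the semiinfinite/affine context. The statement has two directions. For the easy direction, that $\Omega_{\tw''}\subset\bar\Omega_{\tw'}$ implies $\tw''\leq\tw'$, I would observe that each of the numerical functions $r_{jk}$ and $r_{\langle jk\rangle}$ is lower semicontinuous on $\Flaff\times\Flaff\times\bVo$ (equivalently on the appropriate finite-dimensional approximations $\bV_i/\bV_j$): the dimension of an intersection $F^1_j\cap F'_k$ can only jump up under specialization, and the condition $v'\in(F^1_j+F'_k)$ is a closed condition, so $r_{\langle jk\rangle}=r_{jk}+\delta$ is lower semicontinuous as well. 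Hence each such function is constant on an orbit and does not decrease when one passes to the closure, which gives $r_{jk}(\tw'')\geq r_{jk}(\tw')$ and $r_{\langle jk\rangle}(\tw'')\geq r_{\langle jk\rangle}(\tw')$, i.e.\ $\tw''\leq\tw'$. One must check that only finitely many of the numbers $r_{jk}$ actually vary (outside a stable range determined by the component $RB^\aff_i$ and by the virtual dimensions), so that these conditions genuinely live on finite-dimensional quotients; this is where the finiteness remarks of~\ref{bruhat} and the component decomposition are used.

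For the converse, the substantive direction, I would argue by descending induction on $\ell(\tw')$ (within the fixed component $RB^\aff_i$), reducing the claim to the case where $\tw''$ is covered by $\tw'$, or more precisely constructing, whenever $\tw''<\tw'$ strictly, an element $\tw'''$ with $\tw''\leq\tw'''<\tw'$ and $\Omega_{\tw'''}\subset\bar\Omega_{\tw'}$, and then iterating. The geometric input is the same as in the proof of Proposition~\ref{tate}: the Demazure-type resolutions of $\bar\Omega_{\tw_{i,j}}$ together with the fact (Lemma~\ref{missing}) that $\CR^\aff$ is generated over $\CH^\aff$ by the $\tw_{i,j}$, whose orbit closures are smooth (\ref{kissing}). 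Concretely, one writes $\bar\Omega_{\tw'}$ as the image of a smooth tower of $\PP^1$-bundles (convolving the affine-flag Demazure resolution with the mirabolic factor), so that $\bar\Omega_{\tw'}\setminus\Omega_{\tw'}$ is a union of orbit closures $\bar\Omega_{\tw'''}$ for explicitly described $\tw'''$; this is exactly the list read off from the right-multiplication rule in Proposition~\ref{aff thm_expl} (the orbits $\tw s$, $(\tw s)'$, $\tw'$, $\tw' s$ appearing there are the candidate codimension-one degenerations). One then checks the combinatorial fact that the rank conditions defining ``$\leq\tw'$'' are exactly the union of those defining ``$\leq\tw'''$'' over this covering list — i.e.\ that a pair $(w'',\beta'')$ with $r_{jk},r_{\langle jk\rangle}$ all $\geq$ those of $\tw'$ must already satisfy the stronger inequalities of some $\tw'''$ in the list — which is the affine-plus-vector analogue of Magyar's lattice-diagram argument.

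The main obstacle I expect is precisely this last combinatorial step: verifying that the rank matrices $(r_{jk})$ and $(r_{\langle jk\rangle})$ form a \emph{complete} set of invariants for the Bruhat order, i.e.\ that no spurious inequalities are needed and none are missing. In Magyar's two-flag case this is the content of the Rank Theorem and is genuinely nontrivial; here one has the extra vector $v$ contributing the $\delta(j,k,\cdot)$ terms and the extra combinatorial datum $\beta$, and one must confirm that the enlarged rank function $r_{\langle jk\rangle}=r_{jk}+\delta$ captures all the degeneration constraints coming from the vector — in particular that the ``$\beta$-part'' of the covering relations in Proposition~\ref{aff thm_expl} (the passages $\beta\rightsquigarrow\beta\btu\{\sigma_i+1\}$ and $s(\beta)\rightsquigarrow s(\beta)\setminus\{\sigma_{i+1}\}$) corresponds to a decrease of some $\delta(j,k,\cdot)$ and nothing more. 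I would handle this exactly as in~\cite{M}, by arguing that if all rank inequalities hold but $\tw''\not\leq\tw'''$ for every $\tw'''$ in the covering list, one can still exhibit a subquotient configuration forcing one more inequality, contradicting completeness; once this is in place, the induction closes and the proof is complete. The remaining verifications — semicontinuity bookkeeping, smoothness of the $\bar\Omega_{\tw_{i,j}}$, and identification of the boundary strata of the Demazure resolutions — are routine given the cited results.
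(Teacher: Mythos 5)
Your proposal is, in outline, the same approach the paper has in mind: the paper's proof of Proposition~\ref{magyar} is a one-line assertion that it ``is proved similarly to the Rank Theorem~2.2 of~\cite{M},'' and both the semicontinuity argument for the easy direction and the degeneration argument for the converse are faithful to Magyar's strategy.

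One correction worth making, though. You present the extra vector $v$ (and hence the $\delta(j,k,\cdot)$ and $r_{\langle jk\rangle}$ data) as a new feature to be grafted onto Magyar's two-flag theorem, and flag the completeness of these enlarged rank functions as the main risk. In fact Magyar's cited result is the Rank Theorem from \emph{Bruhat Order for Two Flags and a Line}; the line (equivalently a vector modulo scalar, i.e.\ the $\Pinf$-factor) is already part of his setup, and the completeness of the rank data including the $\delta$-type terms is precisely what his Theorem~2.2 establishes in the finite-dimensional case. So the completeness step you worry about is not a new gap to close; it is the content of the theorem being cited. What genuinely needs adapting — and what ``similarly'' in the paper is implicitly pointing at — is the passage from the finite to the affine/periodic setting: that the period-$N$ constraints on the flags and on $\beta$ make only finitely many of the rank inequalities on a fixed component $RB^\aff_i$ nontrivial, that the degenerations used in Magyar's converse direction can be carried out $\bI$-equivariantly on a finite-dimensional approximation $\Flaff\times\bV_i/\bV_j$, and that these approximations are compatible with the closure relations. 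You do touch on the finiteness bookkeeping, which is the right instinct. Your appeal to Demazure resolutions and to the multiplication formulas of Proposition~\ref{aff thm_expl} to identify the codimension-one boundary strata is a workable alternative route (and meshes well with Proposition~\ref{tate}), but it is additional machinery beyond what the cited reference uses; Magyar's proof proceeds by direct one-parameter degenerations of explicit representatives, and that transfers more immediately to the affine setting once one works on a fixed component. In short: your plan is sound, but reallocate your effort from re-proving the ``line'' part of Magyar's theorem (already done in~\cite{M}) to checking carefully that everything localizes to finite-dimensional pieces in the affine case.
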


\subsection{Duality and the Kazhdan-Lusztig basis of $\hR$}
Recall that the Grothendieck-Verdier duality on $\Flaff$ induces
the involution (denoted by $h\mapsto\ol h$) of $\CH^\aff$ which takes
$\bv$ to $\bv^{-1}$ and $\utH_w$ to $\utH_w$. We will describe the
involution on $\hR$ induced by the Grothendieck-Verdier duality on
$\Flaff\times\bVo$. Recall the elements $\tw_{i,j}$ introduced
in~\ref{kissing}. We set $\utH_{\tw_{i,j}}:=\sum_{k\leq
  i}(-\bv)^{k-i}H_{\tw_{k,j}}$. This is the class
of the selfdual (geometrically constant) IC sheaf on the closure of the
orbit $\Omega_{\tw_{i,j}}$. The following proposition is proved
exactly as Proposition~5 of~\cite{T}.

\begin{prop}
\label{aff duality}
a) There exists a unique involution $r\mapsto\overline{r}$ on $\hR$ such
that $\overline\utH_{\tw_{i,j}}=\utH_{\tw_{i,j}}$ for any $i,j\in\BZ$, and
$\overline{hr}=\overline{h}\overline{r}$, and
$\overline{rh}=\overline{r}\overline{h}$ for any $h\in\CH^\aff$ and $r\in\hR$.

b) The involution in a) is induced by the Grothendieck-Verdier duality
on $\Flaff\times\bVo$. 
\end{prop}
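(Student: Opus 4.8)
The plan is to mimic the proof of Proposition~5 of~\cite{T} step by step, transferring it to the mirabolic affine setting; the geometric statement b) will then follow from the algebraic uniqueness in a) together with standard properties of Grothendieck--Verdier duality.

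First I would establish existence. The key observation is Lemma~\ref{missing}: since $\CR^\aff$ (hence its completion $\hR$) is generated as a $\CH^\aff$-bimodule by the elements $H_{\tw_{i,j}}$, and since $\utH_{\tw_{i,j}}=\sum_{k\le i}(-\bv)^{k-i}H_{\tw_{k,j}}$ differs from $H_{\tw_{i,j}}$ by a triangular (with respect to the partial order of~\ref{bruhat}, restricted to the chain $\tw_{k,j}\le\tw_{i,j}$) unipotent change of basis, the $H_{\tw_{i,j}}$ themselves can be recovered from the $\utH_{\tw_{i,j}}$. Hence $\hR$ is also generated by $\{\utH_{\tw_{i,j}}\}$ as a bimodule. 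Any element of $\hR$ can therefore be written (non-uniquely) as a finite sum $\sum h'_a\,\utH_{\tw_{i_a,j_a}}\,h''_a$ with $h'_a,h''_a\in\CH^\aff$, and we \emph{define} $\overline{\sum h'_a\utH_{\tw_{i_a,j_a}}h''_a}:=\sum\overline{h'_a}\,\utH_{\tw_{i_a,j_a}}\,\overline{h''_a}$. One must check this is well defined, i.e.\ independent of the chosen presentation; here I would argue exactly as in~\cite{T}, using that the relations among the generators in the bimodule are compatible with the bar-involution on $\CH^\aff$ (the defining structure constants of $\bR^\aff$, after the $(-\bv)^{-\ell}$ renormalization producing the $H_\tw$ basis, lie in $\BZ[\bv,\bv^{-1}]$ and the relevant module relations are bar-invariant because they come from geometry, or can be read off Proposition~\ref{aff thm_explicit}). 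Once well-definedness is granted, the $\BZ$-linear extension (sending $\bv\mapsto\bv^{-1}$) is clearly an involution fixing each $\utH_{\tw_{i,j}}$ and satisfying $\overline{hr}=\overline h\,\overline r$, $\overline{rh}=\overline r\,\overline h$.

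For uniqueness: suppose $r\mapsto r^\dagger$ is another such involution. Then $r\mapsto\overline{r^\dagger}$ is a bimodule endomorphism of $\hR$ (now $\CH^\aff$-linear, not semilinear, since the two bar-involutions compose to the identity on $\CH^\aff$) fixing all the generators $\utH_{\tw_{i,j}}$, hence fixing everything; so $r^\dagger=\overline r$. This uses only that the $\utH_{\tw_{i,j}}$ generate $\hR$ as a bimodule, already noted above.

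For part b): the Grothendieck--Verdier duality $\mathbb D$ on $\Flaff\times\bVo$ is compatible with left and right convolution by Tate sheaves on $\Flaff$ (it intertwines convolution with the dual convolution, which on the level of the self-dual generators $\utH_w$ of $\CH^\aff$ acts trivially), and it sends $[j^{\tw_{i,j}}_{!*}\Ql[\ell(\tw_{i,j})](\tfrac{\ell(\tw_{i,j})}{2})]$ to itself because this IC sheaf is self-dual by construction (the orbit closure $\bar\Omega_{\tw_{i,j}}$ is smooth, per the proof of Proposition~\ref{tate}, so the IC sheaf is just the shifted-twisted constant sheaf, manifestly self-dual). Under the identification $K(D_\bI^\Tate(\Flaff\times\bVo))\simeq\hR$ of~\ref{geom}, the class of that IC sheaf is $\utH_{\tw_{i,j}}$. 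Thus the involution induced by $\mathbb D$ fixes all $\utH_{\tw_{i,j}}$ and is bimodule-semilinear over the bar-involution of $\CH^\aff$; by the uniqueness in a) it coincides with $r\mapsto\overline r$. The main obstacle is the well-definedness check in the existence part --- i.e.\ verifying that the assignment on presentations descends to $\hR$ --- but this is handled exactly as in~\cite{T} and is ultimately a consequence of the geometric origin (equivalently Proposition~\ref{aff thm_explicit}) of the bimodule relations.
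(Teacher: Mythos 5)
Your sketch follows the approach the paper itself indicates: the paper's ``proof'' is the single sentence that Proposition~\ref{aff duality} ``is proved exactly as Proposition~5 of~\cite{T},'' and your three-step outline (existence via bimodule generators, uniqueness via the same generators, part~b) via self-duality of the constant IC sheaves on the smooth orbit closures $\bar\Omega_{\tw_{i,j}}$) reproduces that template faithfully, including the correct identification of $\utH_{\tw_{i,j}}$ with the class of the geometrically constant IC sheaf.

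One imprecision deserves flagging. You write that every element of $\hR$ can be expressed as a \emph{finite} bimodule sum $\sum h'_a\,\utH_{\tw_{i_a,j_a}}\,h''_a$. This is not literally true: $\hR$ is a genuine completion of $\CR^\aff$ (formal sums $\sum a_\tw H_\tw$ over $RB^\aff_i$ with $a_\tw=0$ for $\ell(\tw)\gg0$), and the $\utH_{\tw_{i,j}}=\sum_{k\le i}(-\bv)^{k-i}H_{\tw_{k,j}}$ are themselves infinite sums living only in $\hR$. Lemma~\ref{missing} says the $H_{\tw_{i,j}}$ generate $\CR^\aff$ (finite sums), and the finite bimodule span of the $\utH_{\tw_{i,j}}$ neither coincides with $\CR^\aff$ nor exhausts $\hR$. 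The correct route is: define the bar-involution on the finite bimodule span; observe (using, e.g., $H_{\tw_{i,j}}=\utH_{\tw_{i,j}}+\bv^{-1}\utH_{\tw_{i-1,j}}$ and Proposition~\ref{aff thm_explicit}) that the involution is triangular decreasing in the basis $\{H_\tw\}$, i.e.\ $\overline{H_\tw}\in H_\tw+\sum_{\tilde y<\tw}\BZ[\bv,\bv^{-1}]\,H_{\tilde y}$ with only finitely many contributions in each length; and then extend to all of $\hR$ by this length-boundedness. The paper glosses over this by citing [T], where the analogous spaces are finite in each degree, so your omission is at roughly the paper's level of rigor, but as stated your sentence is false and the continuity/boundedness step should be made explicit.
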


The following proposition is proved exactly as Proposition~6
of~\cite{T}. 

\begin{prop}
\label{aff KL basis}
a) For each $\tw\in RB^\aff$ there exists a unique element $\utH_\tw\in\hR$
such that $\overline\utH_\tw=\utH_\tw$, and 
$\utH_\tw\in H_\tw+\sum_{\tilde{y}<\tw}\bv^{-1}\BZ[\bv^{-1}]H_{\tilde{y}}$.

b) For each $\tw\in RB^\aff$ the element $\utH_\tw$ is the class of the 
selfdual $\bI$-equivariant IC-sheaf with support $\bar{\Omega}_\tw$.
In particular, for $\tw=\tw_{i,j}$, the element $\utH_{\tw_{i,j}}$ 
is consistent with the notation introduced before 
Proposition~\ref{aff duality}.
\end{prop}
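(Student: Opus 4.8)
The plan is to follow the standard Kazhdan--Lusztig existence-and-uniqueness argument, adapted from Proposition~6 of~\cite{T}, working inside the completed bimodule $\hR$. First I would establish uniqueness: if $\utH_\tw$ and $\utH'_\tw$ both satisfy the two conditions, their difference $d = \utH_\tw - \utH'_\tw$ is bar-invariant and lies in $\sum_{\tilde y < \tw}\bv^{-1}\BZ[\bv^{-1}]H_{\tilde y}$; a standard triangularity argument on the coefficients (using that $\overline{H_{\tilde y}} \in H_{\tilde y} + \sum_{\tilde z < \tilde y}\BZ[\bv,\bv^{-1}]H_{\tilde z}$, which follows from Proposition~\ref{aff duality}) forces $d=0$. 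The finiteness statement from~\ref{bruhat}'s surrounding discussion --- that within a fixed component $RB^\aff_i$ there are only finitely many $\tilde y$ of bounded length --- guarantees this induction terminates and that all formal sums involved actually lie in $\hR_i$.

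For existence I would argue by induction on $\ell(\tw)$ with respect to the Bruhat order on $RB^\aff_i$ described in~\ref{bruhat}. The base case is $\tw = \tw_{i,j}$ for the minimal elements, where $\utH_{\tw_{i,j}} := \sum_{k\le i}(-\bv)^{k-i}H_{\tw_{k,j}}$ is exhibited explicitly and is bar-invariant by Proposition~\ref{aff duality}a) together with the evident relation $\overline{H_{\tw_{i,j}}} = H_{\tw_{i,j}}$ in the rank-one submodule generated by the $\tw_{k,j}$. For the inductive step, given $\tw$ not minimal I would pick a generator $s = s_i$ with $\tw s < \tw$ (or a shift $\tau$, or multiplication on the left) so that $\tw$ appears with coefficient $1$ in one of the products $\utH_{\tw'}\utH_{s}$ computed via Proposition~\ref{aff thm_explicit}, where $\tw'$ has strictly smaller length and $\utH_{\tw'}$ is already constructed. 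Since $\utH_{\tw'}$ and $\utH_s$ are bar-invariant and the bar involution is a bimodule map (Proposition~\ref{aff duality}a)), the product is bar-invariant. One then subtracts a $\BZ[\bv^{-1}]$-combination of the $\utH_{\tilde y}$ with $\tilde y < \tw$, already constructed, to correct the lower-order coefficients into $\bv^{-1}\BZ[\bv^{-1}]$; bar-invariance is preserved and the defining conditions hold, yielding $\utH_\tw$.

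Part b) then follows by comparing this algebraic construction with geometry. By Proposition~\ref{tate} the IC sheaf $j^\tw_{!*}\Ql[\ell(\tw)](\tfrac{\ell(\tw)}{2})$ on $\bar\Omega_\tw$ is Tate, hence defines a class in $\hR \cong K(D_\bI^\Tate(\Flaff\times\bVo))$; it is selfdual under Grothendieck--Verdier duality, so its class is bar-invariant by Proposition~\ref{aff duality}b), and by the support/stalk estimates for IC sheaves (using Proposition~\ref{magyar} to identify which orbits appear) its class lies in $H_\tw + \sum_{\tilde y < \tw}\bv^{-1}\BZ[\bv^{-1}]H_{\tilde y}$ --- here one invokes the parity vanishing of IC stalks, available via the equisingularity to enhanced nilpotent cone orbits and~\cite{AH} as noted in the introduction, or directly via the Demazure resolutions of~\cite{T}. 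By the uniqueness in a) this class equals $\utH_\tw$. The consistency of notation for $\tw = \tw_{i,j}$ is then the observation that the smooth closure $\bar\Omega_{\tw_{i,j}}$ has constant IC sheaf, whose class is exactly the explicit sum written before Proposition~\ref{aff duality}.

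The main obstacle I anticipate is the inductive step for existence: one must verify that for every non-minimal $\tw \in RB^\aff$ there is a choice of generator (among the $s_i$ acting on the right, the $s_i$ acting on the left, and the shifts $\tau^{\pm1}$) and a predecessor $\tw'$ such that the relevant case of the multiplication formula~(\ref{aff eq_explicit}) (or its left-handed analogue) produces $H_\tw$ with coefficient $1$ and all other terms indexed by elements $\le \tw$. This is a combinatorial bookkeeping problem on $RB^\aff$ --- tracking how $\beta$ and $w$ change under the five cases --- entirely parallel to the one handled in the proof of Proposition~6 of~\cite{T}, and I would dispatch it by the same case analysis, which is why the paper is content to say the proof ``repeats'' that argument.
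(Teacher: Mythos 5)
Your proposal is the standard Kazhdan--Lusztig existence-and-uniqueness argument, which is indeed what the paper invokes by saying the proof ``repeats Proposition~6 of~\cite{T}.'' The overall structure (uniqueness via bar-invariance and unitriangularity, existence via the multiplication formulas starting from the explicit generators, part b) via self-duality and the identification of $\hR$ with $K(D_\bI^\Tate(\Flaff\times\bVo))$) matches the intended argument. Two details, however, should be corrected.

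First, the parenthetical claim $\overline{H_{\tw_{i,j}}}=H_{\tw_{i,j}}$ is false: the orbit $\Omega_{\tw_{i,j}}$ is not closed (its closure contains all $\Omega_{\tw_{k,j}}$ for $k\le i$, which is why $\utH_{\tw_{i,j}}$ is an infinite sum), so the shriek extension is not self-dual. What you want, and all you need for the base case, is $\overline{\utH_{\tw_{i,j}}}=\utH_{\tw_{i,j}}$, which is exactly part of Proposition~\ref{aff duality}~a), coupled with the smoothness of $\bar{\Omega}_{\tw_{i,j}}$ mentioned in the proof of Proposition~\ref{tate}.

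Second, your appeal to parity vanishing in part b) is both unnecessary and, as cited, unavailable. The equisingularity with enhanced nilpotent cone orbit closures and the parity vanishing from~\cite{AH} concern the mirabolic \emph{affine Grassmannian} $\Gr\times\bVo$ (sections~\ref{Lusztig},~\ref{Mir}), not the mirabolic affine flag variety $\Flaff\times\bVo$ treated in the present proposition; for the latter the paper explicitly only \emph{conjectures} pointwise purity (hence parity vanishing) after Proposition~\ref{aff KL basis}. Fortunately parity vanishing is not needed: Gabber's purity theorem shows $j^\tw_{!*}\Ql[\ell(\tw)](\tfrac{\ell(\tw)}{2})$ is pure of weight $0$, so its restriction to $\Omega_{\tilde y}$ with $\tilde y<\tw$ has cohomology in degrees $\le -\ell(\tilde y)-1$ of weight $\le$ the cohomological degree; combined with the Tate property this forces the coefficient of $H_{\tilde y}$ into $\bv^{-1}\BZ[\bv^{-1}]$. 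Replacing ``parity vanishing'' by this weight bound makes your argument complete and self-contained.
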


We conjecture that the sheaves 
$j_{!*}\Ql[\ell(\tw)](\frac{\ell(\tw)}{2})$
are pointwise pure. The parity vanishing of their stalks, and the 
positivity properties of the coefficients of the transition matrix
from $\{H_\tw\}$ to $\{\utH_\tw\}$ would follow.

\section{Mirabolic affine Grassmannian}
\label{Grass}

\subsection{$\GF$-orbits in $\Gr\times\Gr\times\Pinf$} 
We consider the spherical counterpart of the objects of
the previous section. To begin with, recall that the $\GF$-orbits in
$\Gr\times\Gr$ are numbered by the set $\gS_N^\sph$ formed by all the
nonincreasing $N$-tuples of integers
$\nu=(\nu_1\geq\nu_2\geq\ldots\geq\nu_N)$. Namely, for
such $\nu$, the following pair $(L^1,L^2)$ of $\bO$-sublattices in 
$\bV$ lies in the orbit ${\mathbb O}_\nu$:

\begin{equation}
\label{second}
L^1=\bO\langle e_1,e_2,\ldots,e_N\rangle,\ 
L^2=\bO\langle
t^{-\nu_1}e_1,t^{-\nu_2}e_2,\ldots,t^{-\nu_N}e_N\rangle. 
\end{equation}

We define $RB^\sph$ as $\gS_N^\sph\times\gS_N^\sph$. We have an
addition map $RB^\sph\to\gS_N^\sph:\ (\lambda,\mu)\mapsto\nu=\lambda+\mu$
where $\nu_i=\lambda_i+\mu_i,\ i=1,\ldots,N$.

\begin{prop}
\label{sph mwz}
There is a one-to-one correspondence between the set of $\GF$-orbits
in $\Gr\times\Gr\times\bVo$ (equivalently, in
$\Gr\times\Gr\times\Pinf$) and $RB^\sph$.
\end{prop}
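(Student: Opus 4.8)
The plan is to mimic the proof of Proposition~\ref{aff mwz} (which in turn follows the proof of Lemma~2 of~\cite{T}), but in the spherical setting, where the affine Weyl group is replaced by the set $\gS_N^\sph$ of dominant coweights and the combinatorial datum $\beta\subset\BZ$ controlling the line is replaced by its ``spherical shadow.'' First I would fix the pair $(L^1,L^2)$ as in~(\ref{second}) for a given $\nu=(\nu_1\geq\cdots\geq\nu_N)$, so that its stabilizer in $\GF$ is the parahoric $P_\nu=\GO\cap {}^{g_\nu}\!\GO$ for the obvious cocharacter $g_\nu$. The orbit space $\Gr\times\Gr\times\bVo$ then breaks up as a disjoint union over $\nu\in\gS_N^\sph$ of $P_\nu$-orbits on $\bVo=\bV\setminus\{0\}$, and the task reduces to classifying those.

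The key step is to analyze the $P_\nu$-orbits on $\bVo$. A nonzero vector $v\in\bV$ is determined up to scalar by the collection of $\bO$-lattices $L$ (among the ``coordinate'' lattices adapted simultaneously to $L^1$ and $L^2$) that contain $v$; equivalently, by which of the elementary pieces $t^{-\nu_i}e_i$ occur in the leading part of $v$. Concretely, writing $v=\sum_i f_i(t)e_i$ with $f_i\in\bF$, the relevant invariant is, for each $i$, the order of vanishing $\mathrm{ord}(f_i)$ measured against $\nu_i$; normalizing by the scalar action one keeps only the relative data, and a standard reduction using the unipotent radical of $P_\nu$ (which lets one kill lower-order terms in each coordinate) shows every $P_\nu$-orbit contains a representative $v=\sum_{i\in S}t^{-a_i}e_i$ for a suitable subset $S$ and integers $a_i$ pinned down by $\nu$ and a further dominance-type normalization. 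The upshot is a second dominant-coweight-type datum, i.e.\ a second element of $\gS_N^\sph$, so that altogether the orbit is labelled by a pair $(\lambda,\mu)\in\gS_N^\sph\times\gS_N^\sph=RB^\sph$; I would set this up so that $\nu=\lambda+\mu$ matches the addition map, with $\mu$ recording the ``interaction of $v$ with $L^2$'' and $\lambda$ recording what is left. One should also double-check the finiteness/well-definedness at infinity (only finitely many $e_k$, $k\gg 0$, enter; infinitely many $k\ll 0$ are implicitly present), exactly as the condition ``$i\ll 0\in\beta$, $j\gg 0\notin\beta$'' was imposed in $RB^\aff$.

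As in Proposition~\ref{aff mwz}, I would then only need to exhibit the representative explicitly and verify that distinct pairs give distinct orbits: for $(\lambda,\mu)$ the triple $(L^1,L^2,v)$ with $L^1,L^2$ as in~(\ref{second}) for $\nu=\lambda+\mu$ and $v=\sum_{i=1}^N t^{-\mu_i}e_i$ (or the appropriate variant) is the representative; injectivity follows because the invariants $r_{jk}=\dim(L^1\cap L^2)$-type numbers together with the incidence function $\delta(j,k)=[\,v\in L^1+\text{(twist of }L^2)\,]$ recover both $\lambda$ and $\mu$ — this is precisely the rank-matrix description that reappears in~\ref{bruhat} for the flag case and restricts to the Grassmannian.

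The main obstacle I anticipate is the normal-form step: pinning down the exact subset $S$ and exponents $a_i$ so that the parametrization is genuinely bijective rather than merely surjective, i.e.\ ruling out coincidences between different normal forms under the residual torus and residual Weyl-group symmetry of $P_\nu$. In the mirabolic (Solomon / \cite{MWZ}, \cite{Sol}) finite-dimensional model this is exactly the passage from ``$(g,v)$'' to a pair of partitions, and the affine/spherical version should go through verbatim once one is careful that the coweights are taken dominant and that the compatibility $\nu=\lambda+\mu$ is built in; the remaining verifications (transitivity of $P_\nu$ on each stratum, closure of the list) are routine and, as the authors do for Proposition~\ref{aff mwz}, can be left to the reader with a pointer to~\cite{T},~Lemma~2 and~\cite{Sol}.
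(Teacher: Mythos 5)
Your proposal follows essentially the same route the paper takes: the paper's proof is a one-line deferral to Proposition~\ref{aff mwz} (and hence to Lemma~2 of~\cite{T}), i.e.\ fix $(L^1,L^2)$ as in~(\ref{second}) for each $\nu\in\gS_N^\sph$, classify the orbits of $\mathrm{Stab}_{\GF}(L^1,L^2)=\GO\cap g_\nu\GO g_\nu^{-1}$ on $\bVo$, and exhibit a representative. The one concrete discrepancy is the representative: the paper uses $v=\sum_{i=1}^N t^{-\lambda_i}e_i$ rather than your $\sum t^{-\mu_i}e_i$ (your hedge ``or the appropriate variant'' covers this, but the distinction matters later for consistency with the conventions of~\cite{T} and with the structure constants of subsection~\ref{net}); also a small terminological slip: $\GO\cap g_\nu\GO g_\nu^{-1}$ is an intersection of two maximal parahorics, not itself a parahoric in general.
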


\begin{proof} The argument is entirely similar to the proof of
Proposition~\ref{aff mwz}. We only mention that a
representative of an orbit $\BO_{(\lambda,\mu)}$
corresponding to $(\lambda,\mu)$ with $\lambda+\mu=\nu$ is given
by $(L^1,L^2,v)$ where $(L^1,L^2)$ are as in~(\ref{second}), and 
$v=\sum_{i=1}^Nt^{-\lambda_i}e_i$. \end{proof}

\begin{prop}
There is a one-to-one correspondence between the set of $\GF$-orbits
in $\Gr\times\Flaff\times\bVo$ (equivalently, in
$\Gr\times\Flaff\times\Pinf$) and the set of pairs of integer sequences 
$(\{b_1,\dots,b_N\},\{c_1,\dots,c_N\})$ such that if $b_i-i/N <b_j-j/N$
then $c_i\le c_j$. Namely, a representative of the orbit corresponding
to $(\{b_1,\dots,b_N\},\{c_1,\dots,c_N\})$ is given by $(L,F,v)$ where
$L=\bO\langle t^{b_1+c_1}e_1,\dots,t^{b_N+c_N}e_N\rangle$,
$F_k=\langle e_k,e_{k-1},e_{k-2},\dots\rangle$,
$v=\sum_{i=1}^N t^{b_i}e_i$. \qed
\end{prop}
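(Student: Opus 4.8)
The plan is to follow the same strategy used for Proposition~\ref{aff mwz} and Proposition~\ref{sph mwz}, which are in turn modeled on~\cite{Sol},~\cite{MWZ}, and~\cite{T}. The statement is a ``mixed'' version interpolating between the spherical--spherical case (orbits in $\Gr\times\Gr\times\bVo$) and the affine--affine case (orbits in $\Flaff\times\Flaff\times\bVo$): here one factor is a full flag and the other is just a lattice. First I would reduce to the classification of $\GO$-orbits (for the $\Gr$-factor) / $\bI$-orbits (for the $\Flaff$-factor) on $\bVo$, after fixing the lattice $L$ and the flag $F_\bullet$ in standard position. Since any $\GF$-orbit on $\Gr\times\Flaff$ has a representative $(L,F_\bullet)$ with $L=\bO\langle t^{a_1}e_1,\dots,t^{a_N}e_N\rangle$ and $F_k=\langle e_k,e_{k-1},\dots\rangle$ as in~(\ref{first}),~(\ref{second}), the remaining data is the $\GF$-orbit of the vector $v$, which is the same as the $(\Stab(L)\cap\Stab(F_\bullet))$-orbit on $\bVo$.

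The key computation is then to understand the group $P:=\Stab_\GF(L)\cap\Stab_\GF(F_\bullet)$ and its orbits on $\bVo$. Writing a general nonzero $v=\sum_i v_i(t)e_i\in\bV$, one extracts two kinds of discrete invariants: for each $i$, the $t$-adic valuation of $v_i$ measures ``how deep $v$ sits'' relative to the lattice $L$ in the $i$-th coordinate, giving the integers $c_i$ (so that $t^{b_i}$ is the leading power, with $b_i+c_i$ the exponent appearing in $L$), while the flag $F_\bullet$ imposes an ordering constraint coming from which $F_k$ first contains (the leading term of) the relevant component. The normal form $v=\sum_{i=1}^N t^{b_i}e_i$ with $L=\bO\langle t^{b_i+c_i}e_i\rangle$ is obtained by using the $\bO^\times$-scaling and the unipotent part of $P$ to kill all but the leading term in each coordinate, exactly as the sum $v=\sum_{i\in\beta}e_k$ works in Proposition~\ref{aff mwz} and $v=\sum t^{-\lambda_i}e_i$ in Proposition~\ref{sph mwz}. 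The constraint ``$b_i-i/N<b_j-j/N\ \Rightarrow\ c_i\le c_j$'' records precisely when two components of $v$ can be separated by the flag: the quantity $b_i-i/N$ linearly orders the basis vectors $e_i$ compatibly with the periodic flag $F_\bullet$ (this is the standard device identifying $\gS_N^\aff$ with periodic permutations), and once $e_i$ precedes $e_j$ in this order, the only freedom left to act on the pair $(v_i,v_j)$ forces $c_i\le c_j$; conversely this is the complete list of invariants.

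The main obstacle is verifying that these invariants are \emph{complete} and that the listed constraint is \emph{exactly} the image of the classifying map — i.e.\ showing that two triples with the same data lie in one orbit and that every admissible pair $(\{b_i\},\{c_i\})$ is realized. The realizability is immediate from the explicit representative. For completeness, I would run the standard ``sweeping'' argument: given an arbitrary $(L',F'_\bullet,v')$, first move $(L',F'_\bullet)$ into the standard pair $(L,F_\bullet)$ by $\GF$; then use an appropriate ordering of the coordinates (dictated by $b_i-i/N$, breaking ties by the residue $i \bmod N$) and successively apply elements of $P$ — torus elements to normalize leading coefficients and unipotent elements supported on pairs $(i,j)$ with $e_i$ earlier than $e_j$ — to clear lower-order terms, terminating in the normal form. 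One must check at each step that the available unipotent directions in $P$ (those preserving both $L$ and $F_\bullet$) suffice, which is exactly where the inequality $b_i-i/N<b_j-j/N\Rightarrow c_i\le c_j$ becomes both necessary (some clearing is obstructed) and sufficient (everything else can be cleared). Since the argument is, step for step, the one in~Lemma~2 of~\cite{T} and in Proposition~\ref{aff mwz} above, merely with $\Flaff$ replaced by $\Gr$ in the first factor, I would state it as ``entirely similar'' and leave the routine bookkeeping to the reader, recording only the representative — as the paper does with the preceding two propositions.
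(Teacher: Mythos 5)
Your proposal is correct and follows precisely the strategy the paper intends: the statement is given only with \qed, the implicit justification being that the argument runs ``entirely similar'' to Lemma~2 of~\cite{T} and to Propositions~\ref{aff mwz},~\ref{sph mwz}, exactly as you lay out. Your reduction to $\bigl(\Stab_\GF(L)\cap\Stab_\GF(F_\bullet)\bigr)$-orbits on $\bVo$, the extraction of the valuation data $c_i$ and the leading exponents $b_i$, the observation that $b_i-i/N$ linearly orders the basis vectors compatibly with the periodic flag (forcing $c$ to be nondecreasing in that order), and the sweeping argument to reach the normal form $v=\sum t^{b_i}e_i$ are all the right ingredients; nothing is missing or misdirected.
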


\subsection{The spherical mirabolic bimodule}
\label{snova}
Let $\sk=\Fq$. Then the spherical affine Hecke $H^\sph$ 
algebra of $G$ is the
endomorphism algebra of the induced module
$\End_\GF(\on{Ind}_\GO^\GF\BZ)$. It coincides with the convolution
ring of $\GF$-invariant functions on $\Gr\times\Gr$.
It has the standard basis $\{U_\nu,\
\nu\in\gS^\sph_N\}$ of characteristic functions of $\GF$-orbits in
$\Gr\times\Gr$, and the structure constants are polynomial in $q$
(Hall polynomials), so we may and will view
$H^\sph=\End_\GF(\on{Ind}_\GO^\GF\BZ)$ as specialization of the
$\BZ[\bq,\bq^{-1}]$-algebra $\bH^\sph$ under $\bq\mapsto q$.

The algebra $H^\sph$ acts by the right and left convolution on the
bimodule $R^\sph$ of $\GF$-invariant functions on
$\Gr\times\Gr\times\bVo$. For $(\lambda,\mu)\in RB^\sph$ let
$U_{(\lambda,\mu)}$ stand for the characteristic function of the
corresponding orbit in $\Gr\times\Gr\times\bVo$. We are going to
describe the right and left action of $H^\sph$ on the bimodule in
the basis $\{U_{(\lambda,\mu)},\ (\lambda,\mu)\in RB^\sph\}$. To this
end recall that $H^\sph$ is a commutative algebra freely generated by
$U_{(1,0,\ldots,0)},U_{(1,1,0,\ldots,0)},\ldots,U_{(1,1,\ldots,1,0)}$,
and $U^{\pm1}$ where $U^{\pm1}$ is the characteristic function of the
orbit of $(L^1,t^{\mp1}L^1)$. We will denote
$\nu=(1,\ldots,1,0,\ldots,0)$ ($r$ 1's and $N-r$ 0's) by $(1^r)$.

Note that the assignment $\phi_{i,j}:\
(L_1,L_2,v)\mapsto(L_1,t^{-i-j}L_2,t^{-i}v)$ 
is a $\GF$-equivariant automorphism
of $\Gr\times\Gr\times\bVo$ sending an orbit $\BO_{(\lambda,\mu)}$ to
$\BO_{(\lambda+i^N,\mu+j^N)}$. We will denote the corresponding
automorphism of the bimodule $R^\sph$ by $\phi_{i,j}$ as well:
$\phi_{i,j}(U_{\lambda,\mu)}=U_{(\lambda+i^N,\mu+j^N)}$. 
Furthermore, an automorphism
$(L_1,L_2)\mapsto(L_2,L_1)$ of $\Gr\times\Gr$ induces an (anti)automorphism
$\varrho$ of (commutative) algebra $H^\sph,\ \varrho(U^{\pm1})=U^{\mp1},\
\varrho(U_\nu)=U_{\nu^*}$ where for $\nu=(\nu_1,\ldots,\nu_N)$ we set
$\nu^*=(-\nu_N,-\nu_{N-1},\ldots,-\nu_1)$. Similarly, an automorphism  
$(L_1,L_2,v)\mapsto(L_2,L_1,v)$ of $\Gr\times\Gr\times\bVo$ induces an
antiautomorphism $\varrho$ of the bimodule $R^\sph$ such that
$\varrho(U_{(\lambda,\mu)})=U_{(\mu^*,\lambda^*)}$, and
$\varrho(hm)=\varrho(m)\varrho(h)$ for any $h\in H^\sph,\ m\in R^\sph$.
Clearly, $U^{\pm1}U_{(\lambda,\mu)}=U_{(\lambda\pm1^N,\mu)}$, and
$U_{(\lambda,\mu)}U^{\pm1}=U_{(\lambda,\mu\pm1^N)}$.

\subsection{Structure constants}
\label{net}
In this subsection we will compute the structure constants 
$G^{(\lambda,\mu)}_{(1^r)(\lambda',\mu')}$ such that 
$U_{(1^r)}U_{(\lambda',\mu')}=\sum_{(\lambda,\mu)\in RB^\sph} 
 G^{(\lambda,\mu)}_{(1^r)(\lambda',\mu')}U_{(\lambda,\mu)}$
(see Proposition~\ref{az} below). 
Due to the existence of the automorphisms $\phi_{i,j}$ of $R^\sph$, it
suffices to compute $G^{(\lambda,\mu)}_{(1^r)(\lambda',\mu')}$ for
$\lambda',\mu'\in\BN^N$. In this case $\lambda,\mu$ necessarily lie in
$\BN^N$ as well, that is, all the four $\lambda',\mu',\lambda,\mu$ are
partitions (with $N$ parts). We have $\lambda=(\lambda_1,\ldots,\lambda_N)$;
we may and will assume that $\lambda_1>0$.
We set $n:=|\lambda|+|\mu|$, and let
$D=\sk^n$. We fix a nilpotent endomorphism $u$ of $D$, and a vector
$v\in D$ such that the type of $\GL(D)$-orbit of the pair $(u,v)$ is
$(\lambda,\mu)$ (see~\cite{T}, Theorem~1). By the definition of the
structure constants in the spherical mirabolic bimodule, 
$G^{(\lambda,\mu)}_{(1^r)(\lambda',\mu')}$ is the number of
$r$-dimensional vector subspaces $W\subset\on{Ker}(u)$ such that the
type of the pair $(u|_{D/W}, v\pmod{W})$ is $(\lambda',\mu')$. 

To formulate the answer we need to introduce certain auxiliary data
in $\on{Ker}(u)$. First of all, $u^{\lambda_1-1}v$ is a nonzero vector
in $\on{Ker}(u)$. We consider the pair of partitions
$(\nu,\theta)=\Upsilon(\lambda,\mu)$ (notations introduced before
Corollary~1 of~\cite{T}), so that
$\nu=\lambda+\mu$ is the Jordan type of $u$. We consider the dual
partitions $\tilde{\nu},\tilde{\theta}$. We consider the following
flag of subspaces of $\on{Ker}(u)$:
\begin{multline}
%\label{flag}
F^{\tilde{\nu}_{\nu_1}}:=\on{Ker}(u)\cap\on{Im}(u^{\nu_1-1})\subset
F^{\tilde{\nu}_{\nu_2}}:=\on{Ker}(u)\cap\on{Im}(u^{\nu_2-1})\subset\ldots\\
\subset
F^{\tilde{\nu}_2}:=\on{Ker}(u)\cap\on{Im}(u^{\nu_{\tilde{\nu}_2}-1})
\subset F^{\tilde{\nu}_1}:=\on{Ker}(u).\nonumber
\end{multline}
It is (an incomplete, in general) flag of intersections of
$\on{Ker}(u)$ with the images of $u,u^2,u^3,\ldots$. More precisely, 
for any $k=0,1,\ldots,\nu_1$ we have 
$F_k:=\on{Ker}(u)\cap\on{Im}(u^k)=F^{\tilde{\nu}_{k+1}}$, and
$\dim(F^{\tilde{\nu}_{k+1}})=\tilde{\nu}_{k+1}$.
There is a unique $k_0$ such that $u^{\lambda_1-1}v\in F_{k_0}$ but
$u^{\lambda_1-1}v\not\in F_{k_0+1}$; namely, we choose the maximal $i$
such that $\lambda_i=\lambda_1$, and then $k_0=\nu_i-1$. 

Let $Q\subset\GL(\on{Ker}(u))$ be the stabilizer of the flag
$F_\bullet$, a parabolic subgroup of $\GL(\on{Ker}(u))$;  
and let $Q'\subset Q$
be the stabilizer of the vector $u^{\lambda_1-1}v$. Both $Q$
and $Q'$ have finitely many orbits in the Grassmannian $\on{Gr}$ of
$r$-dimensional subspaces in $\on{Ker}(u)$. The orbits of $Q$ are
numbered by the compositions
$\rho=(\rho_1,\ldots,\rho_{\nu_1})$ such that $|\rho|=r$, and
$0\leq\rho_k\leq\tilde{\nu}_k-\tilde{\nu}_{k+1}$. Namely,
$W\in\on{Gr}$ lies in the orbit $\BO_\rho$ iff $\dim(W\cap
F_k)=\rho_{k+1}+\ldots+\rho_{\nu_1}$; equivalently,
$\dim(W+F_k)=\tilde{\nu}_{k+1}+\rho_1+\ldots+\rho_k$. 
If we extend the flag $F_\bullet$ to a complete flag in $\on{Ker}(u)$,
then the stabilizer of the extended flag is a Borel subgroup $B\subset Q$.
The orbit $\BO_\rho$ is a union of certain $B$-orbits
in $\on{Gr}$, that is Schubert cells. So the cardinality of $\BO_\rho$
is a sum of powers of $q$ given by the well known formula for the
dimension of the Schubert cells (see e.g. Appendix to Chapter~II
of~\cite{Mac}). We will denote this cardinality by $P_\rho$. 
Note that the Jordan type of $u|_{D/W}$ for $W\in\BO_\rho$ is
$\nu':=\rho(\nu)$ where $\rho(\nu)$ is defined as the partition dual
to $\tilde{\nu}'=(\tilde{\nu}'_1,\tilde{\nu}'_2,\ldots)$, and
$\tilde{\nu}'_k:=\tilde{\nu}_{k+1}+\dim(W+F_{k-1})-\dim(W+F_k)=
\tilde{\nu}_k-\rho_k$.
  
Now each $Q$-orbit $\BO_\rho$ in $\on{Gr}$ splits as a union
$\BO_\rho=\bigsqcup_{0\leq j\leq\nu_1}\BO_{\rho,j}$ of
$Q'$-orbits. Namely, $W\in\BO_\rho$ lies in $\BO_{\rho,j}$ iff
$u^{\lambda_1-1}v\in W+F_j$ but $u^{\lambda_1-1}v\not\in W+F_{j+1}$
(so that for some $j$, e.g. $j<k_0,\ \BO_{\rho,j}$ may be empty). 
The type of $(u|_{D/W},v\pmod{W})$ for $W\in\BO_{\rho,j}$ is
$(\nu',\theta'):=(\rho,j)(\nu,\theta)$ where $\nu'=\rho(\nu)$, and 
$\theta'$ is defined as the partition dual to $\tilde{\theta}'=
(\tilde{\theta}'_1,\tilde{\theta}'_2,\ldots)$, and 
$\tilde{\theta}'_k:=\tilde{\theta}_{k+1}+
\dim(W+F_{k-1}+\sk u^{\lambda_1-1}v)-\dim(W+F_k+\sk
u^{\lambda_1-1}v)$.
Finally, note that $\dim(W+F_{k-1}+\sk u^{\lambda_1-1}v)-\dim(W+F_k+\sk
u^{\lambda_1-1}v)=\dim(W+F_{k-1})-\dim(W+F_k)=
\tilde{\nu}_k-\tilde{\nu}_{k+1}-\rho_k$ if $j\ne k-1$, and
$\dim(W+F_{k-1}+\sk u^{\lambda_1-1}v)-\dim(W+F_k+\sk
u^{\lambda_1-1}v)=\dim(W+F_{k-1})-\dim(W+F_k)-1=
\tilde{\nu}_k-\tilde{\nu}_{k+1}-\rho_k-1$ if $j=k-1$.

It remains to find the cardinality $P_{\rho,j}$ of $\BO_{\rho,j}$.
Let us denote $u^{\lambda_1-1}v$ by $v'$ for short. Then $v'\in
F_{k_0},\ v'\in W+F_j,\ v'\not\in F_{k_0+1},\ v'\not\in W+F_{j+1}$,
thus $v'\in A:=\left\{(W+F_j)\cap F_{k_0}\right\}\setminus
\left(\left\{(W+F_j)\cap 
F_{k_0+1}\right\}\cup\left\{(W+F_{j+1})\cap F_{k_0}\right\}\right)$.
The cardinality of $A$ equals $P_A:=q^{\dim(W+F_j)\cap F_{k_0}}-
q^{\dim(W+F_j)\cap F_{k_0+1}}-q^{\dim(W+F_{j+1})\cap F_{k_0}}+
q^{\dim(W+F_{j+1})\cap F_{k_0+1}}$, while for any $i>l$ we have 
$\dim(W+F_i)\cap F_l=\dim(W+F_i)+\dim F_l-\dim(W+F_l)=
\tilde{\nu}_{i+1}+\rho_{l+1}+\ldots+\rho_i$. Now we can count the set
of pairs $(W,v')$ in a relative position $(\rho,j)$ with respect 
to $F_\bullet$ in two ways. First all $v'$ in $F_{k_0}\setminus
F_{k_0+1}$ ($q^{\tilde{\nu}_{k_0+1}}-q^{\tilde{\nu}_{k_0+2}}$ choices
altogether), and then for each $v'$ all $W$ in $\BO_{\rho,j}$
($P_{\rho,j}$ choices altogether). Second, all $W$ in $\BO_\rho$
($P_\rho$ choices altogether), and then for each $W$ all $v'$ in $A$
($P_A$ choices altogether). We find 
\begin{equation}
\label{phobos}
P_{\rho,j}=P_\rho\cdot
P_A/(q^{\tilde{\nu}_{k_0+1}}-q^{\tilde{\nu}_{k_0+2}})
\end{equation}
Note that $P_{\rho,j}$ is a polynomial in $q$. We conclude that this
polynomial computes the desired structure constant 
\begin{equation}
\label{demos}
G^{(\lambda,\mu)}_{(1^r)(\lambda',\mu')}=P_{\rho,j}
\end{equation}
where $(\lambda',\mu')=\Xi(\nu',\theta')$ (notations introduced before
Corollary~1 of~\cite{T}), and $(\nu',\theta')=(\rho,j)(\nu,\theta)$
where as before we have $(\nu,\theta)=\Upsilon(\lambda,\mu)$. 
   
Clearly, for any $i,j\geq0$ we have
$G^{(\lambda,\mu)}_{(1^r)(\lambda',\mu')}= 
G^{(\lambda+i^N,\mu+j^N)}_{(1^r)(\lambda'+i^N,\mu'+j^N)}$.
Hence for any $(\lambda,\mu),(\lambda',\mu')\in RB^\sph$ we can set
$G^{(\lambda,\mu)}_{(1^r)(\lambda',\mu')}:= 
G^{(\lambda+i^N,\mu+j^N)}_{(1^r)(\lambda'+i^N,\mu'+j^N)}$ for any
$i,j\gg0$.

Also, we set
\begin{equation}
\label{zhut'}
G^{(\lambda,\mu)}_{(\lambda',\mu')(1^r)}:=
G^{(\mu^*-1^N,\lambda^*)}_{(1^{N-r})(\mu'{}^*,\lambda'{}^*)}.
\end{equation}

Thus we have proved the following proposition (the second statement is
equivalent to the first one via the antiautomorphism $\varrho$).

\begin{prop}
\label{az}
Let $(\lambda',\mu')\in RB^\sph$, and $1\leq r\leq N-1$. Then 
\begin{multline}
\label{AZ}
U_{(1^r)}U_{(\lambda',\mu')}=\sum_{(\lambda,\mu)\in RB^\sph} 
 G^{(\lambda,\mu)}_{(1^r)(\lambda',\mu')}U_{(\lambda,\mu)},\ and\\
U_{(\lambda',\mu')}U_{(1^r)}=\sum_{(\lambda,\mu)\in RB^\sph}
G^{(\lambda,\mu)}_{(\lambda',\mu')(1^r)}U_{(\lambda,\mu)}.
\end{multline} 
\end{prop}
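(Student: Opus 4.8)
The plan is to deduce the first formula of~\eqref{AZ} directly from the combinatorial analysis carried out in~\ref{net}, and then obtain the second one formally from the first by applying the antiautomorphism $\varrho$. For the first formula, recall that by definition the structure constant $G^{(\lambda,\mu)}_{(1^r)(\lambda',\mu')}$ is the coefficient of $U_{(\lambda,\mu)}$ in the convolution product $U_{(1^r)}U_{(\lambda',\mu')}$, and by the general theory of convolution algebras of $\GF$-invariant functions this coefficient equals the number of lattices (equivalently, after reducing to a finite-dimensional problem via~\cite{T}, the number of $r$-dimensional subspaces $W\subset\on{Ker}(u)$) realizing the prescribed relative positions. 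So the content of the proposition is precisely the identification of that count with the polynomial $P_{\rho,j}$ of~\eqref{phobos}, together with the dictionary $(\lambda',\mu')=\Xi(\nu',\theta')$, $(\nu',\theta')=(\rho,j)(\nu,\theta)$, $(\nu,\theta)=\Upsilon(\lambda,\mu)$ relating the four partitions — all of which were established in the body of~\ref{net} culminating in~\eqref{demos}. Thus the first statement is a direct restatement of~\eqref{demos}, extended from the case $\lambda',\mu'\in\BN^N$ to arbitrary $(\lambda',\mu')\in RB^\sph$ via the shift-invariance $G^{(\lambda,\mu)}_{(1^r)(\lambda',\mu')}=G^{(\lambda+i^N,\mu+j^N)}_{(1^r)(\lambda'+i^N,\mu'+j^N)}$ and the automorphisms $\phi_{i,j}$ of $R^\sph$.

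First I would note that for general $(\lambda',\mu')\in RB^\sph$ the subspace $W\subset\on{Ker}(u)$ should be replaced by a lattice-theoretic condition on $(L^1,L^2,v)\in\Gr\times\Gr\times\bVo$, but since $U_{(1^r)}$ is supported on an orbit where $L^1\subset L^2$ with $L^2/L^1$ of dimension $r$ killed by the obvious nilpotent operator, the product $U_{(1^r)}U_{(\lambda',\mu')}$ at a point $(L^1,L^3,v)$ counts intermediate lattices $L^2$ with $L^1\subset L^2\subset$ (appropriate span), and after choosing $i,j\gg0$ this is exactly the finite-dimensional subspace count of~\ref{net} with $D=L^3/L^1$ or a suitable truncation. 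Shift-invariance of $G$ is visible from the action of $\phi_{i,j}$, which sends $\BO_{(\lambda,\mu)}$ to $\BO_{(\lambda+i^N,\mu+j^N)}$ and commutes with left multiplication by $U_{(1^r)}$ (which is unaffected since $(1^r)$ lives in the $\Gr\times\Gr$ factor); hence the structure constants of $U_{(1^r)}U_{(\lambda',\mu')}$ transform accordingly, and the definition of $G^{(\lambda,\mu)}_{(1^r)(\lambda',\mu')}$ for non-partition arguments in~\ref{net} is consistent. This gives the first displayed equation.

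For the second equation, I would apply the antiautomorphism $\varrho$ of $R^\sph$ introduced in~\ref{snova}, which satisfies $\varrho(U_{(\lambda,\mu)})=U_{(\mu^*,\lambda^*)}$ and $\varrho(hm)=\varrho(m)\varrho(h)$. Applying $\varrho$ to the first equation of~\eqref{AZ} (with $(\lambda',\mu')$ replaced by a suitable shift so that everything lands among partitions) turns $U_{(1^r)}U_{(\lambda',\mu')}=\sum G^{(\lambda,\mu)}_{(1^r)(\lambda',\mu')}U_{(\lambda,\mu)}$ into $U_{(\mu'^*,\lambda'^*)}\,\varrho(U_{(1^r)})=\sum G^{(\lambda,\mu)}_{(1^r)(\lambda',\mu')}U_{(\mu^*,\lambda^*)}$; one then computes $\varrho(U_{(1^r)})$ in terms of $U_{(1^{N-r})}$ and the central element $U^{\pm1}$ (since $\varrho(U_\nu)=U_{\nu^*}$ and $(1^r)^*=(0^{N-r},(-1)^r)=(1^{N-r})-1^N$ up to reindexing), and reads off the coefficients. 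Matching indices — reindex $(\lambda,\mu)\invl(\mu^*,\lambda^*)$ etc. — yields exactly the definition~\eqref{zhut'}, $G^{(\lambda,\mu)}_{(\lambda',\mu')(1^r)}=G^{(\mu^*-1^N,\lambda^*)}_{(1^{N-r})(\mu'^*,\lambda'^*)}$, and hence the second formula. The only point requiring care — and the main (minor) obstacle — is bookkeeping: tracking the $\pm1^N$ shifts and the $*$ operation through the antiautomorphism so that the final indices coincide with~\eqref{zhut'} on the nose, rather than up to an overall shift; everything else is formal given~\ref{net}.
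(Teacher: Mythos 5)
Your overall plan is the same as the paper's: the paper's ``proof'' of Proposition~\ref{az} is in fact the content of subsection~\ref{net} itself, culminating in~\eqref{demos}, together with the shift-invariance used to extend $G$ to all of $RB^\sph$ and the remark that the second identity is equivalent to the first via the antiautomorphism $\varrho$. You have correctly identified both halves of the argument and the role of $\phi_{i,j}$, and your treatment of the first formula is fine.

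There is, however, a concrete problem with the second half as you have left it: you assert that carrying the $\pm1^N$ shifts and the $*$ operation through $\varrho$ ``yields exactly the definition~\eqref{zhut'}''. If you actually do the bookkeeping, you do not get~\eqref{zhut'} as printed. Starting from $U_{(1^{N-r})}U_{(\mu'^*,\lambda'^*)}=\sum_{(\alpha,\beta)}G^{(\alpha,\beta)}_{(1^{N-r})(\mu'^*,\lambda'^*)}U_{(\alpha,\beta)}$, applying $\varrho$ (with $\varrho(U_{(1^{N-r})})=U_{(1^{N-r})^*}=U_{(1^r)-1^N}=U_{(1^r)}U^{-1}$ and $\varrho(U_{(\mu'^*,\lambda'^*)})=U_{(\lambda',\mu')}$), and then right-multiplying by $U$ gives
$$U_{(\lambda',\mu')}U_{(1^r)}=\sum_{(\alpha,\beta)}G^{(\alpha,\beta)}_{(1^{N-r})(\mu'^*,\lambda'^*)}U_{(\beta^*,\,\alpha^*+1^N)}.$$
Reading off the coefficient of $U_{(\lambda,\mu)}$ forces $\beta=\lambda^*$ and $\alpha^*=\mu-1^N$, i.e.\ $\alpha=(\mu-1^N)^*=\mu^*+1^N$. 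Thus the derivation actually gives $G^{(\lambda,\mu)}_{(\lambda',\mu')(1^r)}=G^{(\mu^*+1^N,\,\lambda^*)}_{(1^{N-r})(\mu'^*,\lambda'^*)}$, with a $+1^N$, not the $-1^N$ appearing in~\eqref{zhut'}. A degree check confirms this: for $G^{(\lambda,\mu)}_{(\lambda',\mu')(1^r)}$ to be nonzero one needs $|\lambda|+|\mu|=|\lambda'|+|\mu'|+r$, and this is compatible with the $+1^N$ version but not with~\eqref{zhut'} as written (the latter forces $|\lambda|+|\mu|=|\lambda'|+|\mu'|+r-2N$). So~\eqref{zhut'} appears to contain a sign slip, and you should not assert agreement with it ``on the nose''; the honest output of your argument is the corrected formula, and that is what matches both the grading constraint and the lattice count. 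Since you yourself flagged this as ``the main (minor) obstacle'' and then waved it through, this is a gap you should close rather than a step you can leave implicit.
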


\subsection{Modified bases and generators}
\label{sph gen}
The formulas~(\ref{AZ}) being polynomial in $q$, we may and will view
the $H^\sph$-bimodule $R^\sph$ of $\GF$-invariant functions on
$\Gr\times\Gr\times\bVo$ as the specialization under $\bq\mapsto q$ of
a $\BZ[\bq,\bq^{-1}]$-bimodule $\bR^\sph$ over the
$\BZ[\bq,\bq^{-1}]$-algebra $\bH^\sph$. 
We extend the scalars to 
$\BZ[\bv,\bv^{-1}]:\ 
\CH^\sph:=\BZ[\bv,\bv^{-1}]\otimes_{\BZ[\bq,\bq^{-1}]}\bH^\sph;\
\CR^\sph:=\BZ[\bv,\bv^{-1}]\otimes_{\BZ[\bq,\bq^{-1}]}\bR^\sph$.

Recall the selfdual basis $C_\lambda$ of $\CH^\sph$
(see. e.g.~\cite{L2}). In particular, for $1\leq r\leq N-1,\
C_{(1^r)}=(-\bv)^{-r(N-r)}U_{(1^r)}$. For $(\lambda,\mu)\in RB^\sph$
with $\nu=\lambda+\mu$, we denote by $\ell(\lambda,\mu)$ the sum
$d(\nu)+|\lambda|$ with $|\lambda|:=\lambda_1+\ldots+\lambda_N$, and
$d(\nu):=|\nu|(N-1)-2n(\nu)$ where $n(\nu)=\sum_{i=1}^N(i-1)\nu_i$. 

We introduce a new basis
$\{H_{(\lambda,\mu)}:=(-\bv)^{-\ell(\lambda,\mu)}U_{(\lambda,\mu)}$ of
$\CR^\sph$. We consider the elements
$(i^N,j^N)=((i,\ldots,i),(j,\ldots,j))\in RB^\sph$ for any
$i,j\in\BZ$. The following lemma is proved the same way
as~Lemma~\ref{missing}. 
 
\begin{lem}
\label{sph missing}
$\CR^\sph$ is generated by $\{H_{(i^N,j^N)},\ i,j\in\BZ\}$ as an
$\CH^\sph$-bimodule. 
\end{lem}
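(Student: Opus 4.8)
The statement asserts that $\CR^\sph$, as a bimodule over $\CH^\sph$, is generated by the ``central'' elements $H_{(i^N,j^N)}$, $i,j\in\BZ$. The reference tells us to argue as in Lemma~\ref{missing}, which in turn mimics Corollary~2 of~\cite{T}; so the plan is to carry over that inductive scheme to the spherical setting. First I would reduce to a statement about the ordinary (non-scaled) structure constants: since $\CR^\sph$ is obtained from $R^\sph$ by the harmless change of scalars $\bq\mapsto\bv^2$ and each $H_{(\lambda,\mu)}$ differs from $U_{(\lambda,\mu)}$ by a unit $(-\bv)^{-\ell(\lambda,\mu)}$, it suffices to show that $\bR^\sph$ is generated over $\bH^\sph$ by the $U_{(i^N,j^N)}$, or even that $R^\sph$ over $\FF_q$ is generated by these elements and that the argument is uniform in $q$.

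\textbf{Main induction.} Fix $(\lambda,\mu)\in RB^\sph$ with $\nu=\lambda+\mu$. Using the automorphisms $\phi_{i,j}$ (which send $U_{(i^N,j^N)}$ to $U_{(0^N,0^N)}$-translates of each other and are realized by right/left multiplication by the invertible central elements $U^{\pm1}$), I may translate so that $\lambda,\mu\in\BN^N$, i.e. all four partitions have nonnegative parts. Then I would induct on $|\lambda|+|\mu|=n$, the base case $n=0$ being $(\lambda,\mu)=(0^N,0^N)$, a generator. For the inductive step, I want to write $U_{(\lambda,\mu)}$ as a $\CH^\sph$-combination of $U_{(\lambda'',\mu'')}$ with strictly smaller $n$ (plus lower-order terms already handled), by applying the explicit multiplication formula of Proposition~\ref{az}. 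Concretely, choose $r$ so that $U_{(1^r)}$ is a nontrivial generator of $\bH^\sph$; by Proposition~\ref{az}, $U_{(1^r)}U_{(\lambda',\mu')}=\sum_{(\lambda,\mu)} G^{(\lambda,\mu)}_{(1^r)(\lambda',\mu')}U_{(\lambda,\mu)}$, and from the explicit formula \eqref{demos}--\eqref{phobos} for the structure constants one reads off which $(\lambda,\mu)$ occur with leading (highest-degree in $q$) coefficient a power of $q$ when $(\lambda',\mu')$ is obtained from $(\lambda,\mu)$ by ``peeling off'' a suitable box — i.e. one identifies the $(\rho,j)$ of type ``remove a single box from the first column'' and checks that the corresponding $P_{\rho,j}$ is a monic power of $q$, so that in the product $U_{(1^r)}U_{(\lambda',\mu')}$ the term $U_{(\lambda,\mu)}$ appears with invertible coefficient in $\BZ[\bq,\bq^{-1}]$ and all other terms have strictly smaller $n$. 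This expresses $U_{(\lambda,\mu)}$ modulo lower-$n$ terms as $U_{(1^r)}\cdot U_{(\lambda',\mu')}$, and the inductive hypothesis finishes the step. One should run the same argument with left multiplication (via $\varrho$ and~\eqref{zhut'}) to reach the diagonal generators when the natural peeling is on the $\mu$-side.

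\textbf{Obstacle.} The one genuinely nontrivial point is the bookkeeping in the previous paragraph: verifying that among the terms produced by Proposition~\ref{az} there is always one with strictly smaller $n=|\lambda|+|\mu|$ from which we can recover $U_{(\lambda,\mu)}$ with an invertible coefficient. This is exactly the content of the analogous step in~\cite{T}, Corollary~2, where $R^\sph$ without the extra vector is generated over $\bH^\sph$ by the $U^{\pm1}$; there the relevant $G$ is a Hall polynomial which is monic of the right degree. Here one needs the mirabolic refinement: the polynomial $P_{\rho,j}$ of~\eqref{phobos} must be monic (a power of $q$) for the distinguished choice of $(\rho,j)$, which follows because for that choice the set $A$ of admissible $v'$ fills out $F_{k_0}\setminus F_{k_0+1}$ up to the expected codimension, so the ratio in~\eqref{phobos} collapses to a single power of $q$. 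Once this degree count is in place, the specialization/scalar-extension remarks are routine, and the passage between $U$'s and $H$'s only changes coefficients by units $(-\bv)^{\pm\ell}$, so the generation statement transports verbatim to $\CR^\sph$. I would therefore state the degree fact explicitly, refer the reader to the parallel computation in~\cite{T} and to the formula~\eqref{phobos}, and leave the remaining verifications to the reader as in Lemma~\ref{missing}.
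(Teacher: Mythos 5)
Your overall shape — reduce to nonnegative partitions via $\phi_{i,j}$, then induct by peeling a $(1^r)$-piece off and using unitriangularity of Proposition~\ref{az} — is the right one, and it is the argument the paper is invoking by referring to Corollary~2 of~\cite{T}. But there is a concrete error that breaks your induction as stated: you claim that in the expansion $U_{(1^r)}U_{(\lambda',\mu')}=\sum G^{(\lambda,\mu)}_{(1^r)(\lambda',\mu')}U_{(\lambda,\mu)}$ "all other terms have strictly smaller $n$." This is impossible. Every $(\lambda,\mu)$ occurring in that sum satisfies $|\lambda|+|\mu|=r+|\lambda'|+|\mu'|$ (convolution respects the connected component of $\Gr\times\bVo$; equivalently, in the Hall picture $\dim D$ is fixed), so every term has exactly the same $n$ as the target $U_{(\lambda,\mu)}$. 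Consequently, you cannot express $U_{(\lambda,\mu)}$ modulo terms of smaller $n$: the other summands are genuinely new elements at the same level $n$, and a single induction on $n$ does not terminate.

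The fix is to run a double induction. The outer induction is on $n=|\lambda|+|\mu|$: if $n$ is a multiple of $N$ and $\lambda=\mu$ are constant, $U_{(\lambda,\mu)}$ is a generator; otherwise one must choose $r\in\{1,\ldots,N-1\}$ and $(\lambda',\mu')$ with $|\lambda'|+|\mu'|=n-r$, and by the outer hypothesis $U_{(\lambda',\mu')}$ already lies in the sub-bimodule. The inner induction is on the Achar--Henderson partial order of Proposition~\ref{sph bruhat} (equivalently on $\ell(\lambda,\mu)$) within the fixed component $RB^\sph_n$: one shows that $(\lambda,\mu)$ is the unique maximal element appearing in $U_{(1^r)}U_{(\lambda',\mu')}$ (or in $U_{(\lambda',\mu')}U_{(1^r)}$, using $\varrho$ and~\eqref{zhut'}), that it appears with a coefficient that is a unit in $\BZ[\bq,\bq^{-1}]$, and that all other summands $U_{(\lambda'',\mu'')}$ have $(\lambda'',\mu'')<(\lambda,\mu)$, so are handled by the inner hypothesis. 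Your observation that the leading $P_{\rho,j}$ from~\eqref{phobos} is a single power of $q$ is exactly what is needed for this unitriangularity, and is the content you should be proving; but the bookkeeping variable it controls is the partial order (or $\ell$), not $n$. With that replacement your proof goes through.
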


\subsection{Geometric interpretation and the completed bimodule
  $\hRs$} 
Following the pattern of subsection~\ref{geom} we define the category
$D_\GO(\Gr\times\bVo)$ acted by convolution (both on the left and on
the right) by $D_\GO(\Gr)$. Similarly to Proposition~\ref{tate}, we
have (in obvious notations):

\begin{prop}
\label{sph tate}
For any $(\lambda,\mu)\in RB^\sph$, the Goresky-MacPherson sheaf
$j^{(\lambda,\mu)}_{!*}\Ql[\ell(\lambda,\mu)](\frac{\ell(\lambda,\mu)}{2})$ 
is Tate.
\end{prop}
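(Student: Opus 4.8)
The plan is to mimic the proof of Proposition~\ref{tate} (which in turn repeats Proposition~4 of~\cite{T}) verbatim, transporting the argument from the affine-flag setting $\Flaff\times\bVo$ to the spherical setting $\Gr\times\bVo$. The structure is an induction on the length $\ell(\lambda,\mu)$, equivalently on the Bruhat order among the $\GO$-orbits $\BO_{(\lambda,\mu)}$ in $\Gr\times\bVo$. What one needs is: (a) a base case consisting of orbits whose closures are smooth, so that the IC sheaf is (a shift-twist of) the constant sheaf and manifestly Tate; (b) for the induction step, a Demazure-type resolution $\pi\colon\widetilde{Z}\to\bar\BO_{(\lambda,\mu)}$ where $\widetilde{Z}$ is an iterated fibration with fibers $\mathbb P^1$ (or finite-type smooth pieces built from the Hecke correspondences for the generators $U_{(1^r)}$ and $U^{\pm1}$), so that $R\pi_*\Ql$ is Tate; and (c) the decomposition theorem, which then expresses $j^{(\lambda,\mu)}_{!*}\Ql[\ell](\tfrac{\ell}{2})$ as a summand of a Tate complex, hence Tate itself, once one knows inductively that the IC sheaves of the smaller strata appearing in the decomposition are Tate.

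Concretely, first I would fix the generators from Lemma~\ref{sph missing} and subsection~\ref{snova}: $U_{(1^r)}$ for $1\le r\le N-1$, together with the invertible shift $U^{\pm1}$ and the automorphisms $\phi_{i,j}$. The automorphisms $\phi_{i,j}$ act on $\Gr\times\Gr\times\bVo$ and permute the orbits transitively on the ``shift'' directions, and they are isomorphisms, so Tateness is preserved under them; this lets me reduce to the case where all of $\lambda,\mu$ are genuine partitions, exactly as in the structure-constant computation of subsection~\ref{net}. For the base of induction I would take the orbits $\BO_{(i^N,j^N)}$ (and their translates), whose closures are the $\GO$-orbit closures of a single lattice pair together with a generic vector — these are (products of) ordinary affine-Grassmannian Schubert varieties for a minuscule-type coweight times an affine space, hence smooth, so the claim is clear there; more generally I would cite, exactly as Proposition~\ref{tate} does for $\tw_{i,j}$, that $\bar\BO_{(1^r,0^{N-r}),\,0}$-type orbit closures are smooth. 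For the step, given $(\lambda,\mu)$ not minimal, pick a generator $g\in\{U_{(1^r)},U^{\pm1}\}$ with $g\cdot(\lambda',\mu')$ ``reaching'' $(\lambda,\mu)$ for some smaller $(\lambda',\mu')$; the corresponding convolution diagram $\Gr\times\bVo\xleftarrow{p}\widetilde{\mathrm{Conv}}\xrightarrow{m}\Gr\times\bVo$ with smooth proper $\mathbb P$-bundle fibers gives a resolution of $\bar\BO_{(\lambda,\mu)}$ whose pushforward of the (inductively Tate) IC sheaf on $\bar\BO_{(\lambda',\mu')}$ is Tate; apply the decomposition theorem and conclude.

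The main obstacle I anticipate is purely bookkeeping rather than conceptual: one must make sure that the Hecke-type correspondences attached to $U_{(1^r)}$ really do provide, after iteration, a \emph{smooth proper} resolution of every orbit closure $\bar\BO_{(\lambda,\mu)}$ — i.e. that the generators of Lemma~\ref{sph missing} suffice to reach every orbit through a chain each of whose steps is given by a single such correspondence, with the Bruhat order (Proposition~\ref{magyar}, in its spherical incarnation) strictly decreasing, so the induction is well-founded. In the affine-flag case this is guaranteed by the word-length filtration on $\gS_N^\aff$ and the simple reflections $s_i$; in the spherical case the relevant combinatorics is the one recorded in subsection~\ref{net} (the orbits $\BO_{\rho,j}$ and the passage $(\nu,\theta)\rightsquigarrow(\rho,j)(\nu,\theta)$), and one has to check that repeatedly applying $U_{(1^r)}$, $U^{\pm1}$ does connect $\BO_{(i^N,j^N)}$ to an arbitrary $\BO_{(\lambda,\mu)}$ with controlled length. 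Granting that — which is exactly the content used to prove Lemma~\ref{sph missing} — the rest runs word for word as in Proposition~\ref{tate}, and I would simply write ``repeats the proof of Proposition~\ref{tate}, using the smoothness of $\bar\BO_{(1^r,0^{N-r}),0}$ for the base and the convolution resolutions attached to $U_{(1^r)}$ for the step.''
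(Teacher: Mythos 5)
Your proposal takes essentially the same route as the paper. The paper's own proof of this statement is a one-liner (``Similarly to Proposition~\ref{tate}''), and Proposition~\ref{tate} in turn invokes Proposition~4 of~\cite{T}: smoothness of the orbit closures of the generators (here $\bar\Omega_{(i^N,j^N)}$ in place of $\bar\Omega_{\tw_{i,j}}$) for the base case, and Demazure/Bott--Samelson-type convolution resolutions for the step, followed by the decomposition theorem. That is exactly what you wrote out; the only minor imprecision is referring to ``$\mathbb P$-bundle fibers'' in the step, when the convolution with $\Gr_{(1^r)}$ gives $\mathrm{Gr}(r,N)$-bundle fibers — still smooth, proper, and Tate, so the argument goes through unchanged.
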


We also have the full subcategories of Tate sheaves
$D^\Tate_\GO(\Gr)\subset D_\GO(\Gr)$ and
$D^\Tate_\GO(\Gr\times\bVo)\subset D_\GO(\Gr\times\bVo)$. Furthermore,  
$D^\Tate_\GO(\Gr)$ is closed under convolution, and 
$D^\Tate_\GO(\Gr\times\bVo)$ is closed under both right and left
convolution with $D^\Tate_\GO(\Gr)$. The $K$-ring
$K(D^\Tate_\GO(\Gr))$ is isomorphic to $\CH^\sph$, and this
isomorphism sends the class of the selfdual Goresky-MacPherson sheaf
on the orbit closure $\Gr_\lambda$ to $C_\lambda$. The $K$-group
$K(D^\Tate_\GO(\Gr\times\bVo))$ forms an $\CH^\sph$-bimodule
isomorphic to a completion $\hRs$ of $\CR^\sph$ we presently
describe. 

The connected components of $\Gr\times\bVo$ are numbered by $\BZ$: a
pair $(L,v)$ lies in the connected component $(\Gr\times\bVo)_i$ where
$i=\dim(L)$. We will say that $(\lambda,\mu)\in RB^\sph_i$ if the
corresponding orbit lies in $(\Gr\times\bVo)_i$; equivalently,
$\sum_{j=1}^N\lambda_j+\sum_{j=1}^N\mu_j=i$. Note that for any
$i,k\in\BZ$ there are only finitely many $(\lambda,\mu)\in RB^\sph$
such that $(\lambda,\mu)\in RB^\sph_i$, and $\ell(\lambda,\mu)=k$. 

We define $\hRs$ as the direct sum $\hRs=\bigoplus_{i\in\BZ}\hRs_i$, and
$\hRs_i$ is formed by all the formal sums $\sum_{(\lambda,\mu)\in
  RB^\sph_i}a_{(\lambda,\mu)} 
H_{(\lambda,\mu)}$ where $a_{(\lambda,\mu)}\in\BZ[\bv,\bv^{-1}]$, and 
$a_{(\lambda,\mu)}=0$ for $\ell(\lambda,\mu)\gg0$. 
So we have $K(D_\GO^\Tate(\Gr\times\bVo))\simeq\hRs$ as an
$\CH^\sph$-bimodule, and the isomorphism takes the class
$[j^{(\lambda,\mu)}_!\Ql[\ell(\lambda,\mu)](\frac{\ell(\lambda,\mu)}{2})]$ 
to $H_{(\lambda,\mu)}$.

\subsection{Bruhat order, duality and the Kazhdan-Lusztig basis}
Following Achar and Henderson~\cite{AH}, we define a partial order
$(\lambda,\mu)\leq(\lambda',\mu')$ on a connected component
$RB^\sph_i$: we say $(\lambda,\mu)\leq(\lambda',\mu')$ iff
$\lambda_1\leq\lambda'_1,\ \lambda_1+\mu_1\leq\lambda'_1+\mu'_1,\ 
\lambda_1+\mu_1+\lambda_2\leq\lambda'_1+\mu'_1+\lambda'_2,\
\lambda_1+\mu_1+\lambda_2+\mu_2\leq\lambda'_1+\mu'_1+\lambda'_2+\mu'_2,
\ldots$ (in the end we have $\sum_{k=1}^N\lambda_k+\sum_{k=1}^N\mu_k=
\sum_{k=1}^N\lambda'_k+\sum_{k=1}^N\mu'_k=i$).
The following proposition is due to Achar and Henderson (Theorem~3.9
of~\cite{AH}) :

\begin{prop}
\label{sph bruhat}
For $(\lambda,\mu),(\lambda',\mu')\in RB^\sph_i$ the $\GO$-orbit
$\Omega_{(\lambda,\mu)}\subset\Gr\times\bVo$ lies in the orbit closure 
$\bar{\Omega}_{(\lambda',\mu')}$ iff
$(\lambda,\mu)\leq(\lambda',\mu')$. 
\end{prop}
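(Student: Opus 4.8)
The plan is to deduce this proposition directly from the corresponding statement for the enhanced nilpotent cone, in the same spirit as the reduction already used in the introduction and in Proposition~\ref{sph tate}. Concretely, I would first recall the ``equisingularity'' established in~\ref{Lusztig},~\ref{Mir}: for a fixed connected component $(\Gr\times\bVo)_i$, each $\GO$-orbit $\Omega_{(\lambda,\mu)}$ meets a finite-dimensional transversal slice on which the induced stratification is, locally, the stratification of an enhanced nilpotent cone $\CN\times\sk^n$ by $\GL_n$-orbits, with the pair of partitions $(\lambda,\mu)$ matching the pair of partitions that labels orbits of the enhanced nilpotent cone in~\cite{T}. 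Under this identification, the closure relation ``$\Omega_{(\lambda,\mu)}\subset\bar\Omega_{(\lambda',\mu')}$'' in $\Gr\times\bVo$ becomes exactly the closure relation between the matching orbits in the enhanced nilpotent cone.

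Next I would invoke Theorem~3.9 of~\cite{AH}, which describes the closure order on $\GL_n$-orbits in the enhanced nilpotent cone by precisely the chain of inequalities $\lambda_1\le\lambda_1',\ \lambda_1+\mu_1\le\lambda_1'+\mu_1',\ \lambda_1+\mu_1+\lambda_2\le\lambda_1'+\mu_1'+\lambda_2',\ldots$, i.e.\ the relation $(\lambda,\mu)\le(\lambda',\mu')$ as defined just above the proposition. One must check that the combinatorial normalization of Achar--Henderson's labelling of enhanced nilpotent orbits agrees with the labelling $(\lambda,\mu)\in RB^\sph$ coming from~\cite{T} used here (these differ at worst by a transpose/reindexing convention); granting that bookkeeping, the partial order on $RB^\sph_i$ is literally transported from~\cite{AH}, so the proposition follows.

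The one point that genuinely needs an argument, rather than citation, is that the local structure of $\bar\Omega_{(\lambda',\mu')}$ near a point of $\Omega_{(\lambda,\mu)}$ really is that of the enhanced nilpotent cone with the correct labels — that is, that the equisingularity of~\ref{Lusztig},~\ref{Mir} is compatible with orbit labels and holds uniformly enough to detect all closure relations (not just to compare IC stalks). For this I would use the explicit lattice representatives of the orbits from Proposition~\ref{sph mwz}: given a representative $(L^1,L^2,v)$, pass to a large enough lattice $t^m L^1\subset L^2\subset t^{-m}L^1$ and realize the relevant neighbourhood inside the finite-type variety of pairs $(L,v)$ with $L$ a lattice between $t^mL^1$ and $t^{-m}L^1$ containing $L^1$; choosing $L^1$ as the ambient and $t^{-m}L^1/L^1$ plus the induced nilpotent $t$-action plays the role of $(D,u)$, and $v\bmod L^1$ the role of the enhanced vector, exhibiting the orbit structure as that of an enhanced nilpotent cone. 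The main obstacle is thus the careful matching of conventions and the verification that this identification is orbit-preserving on the nose; once that is in place, the statement is Theorem~3.9 of~\cite{AH} read through the dictionary.

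\begin{proof}
For a fixed connected component, the $\GO$-orbits on $\Gr\times\bVo$ and their closures are, \'etale-locally, modelled on the $\GL_n$-orbits on the enhanced nilpotent cone $\CN_n\times\sk^n$ and their closures: starting from the lattice representative $(L^1,L^2,v)$ of $\Omega_{(\lambda,\mu)}$ furnished by Proposition~\ref{sph mwz}, fix $m\gg0$ with $t^mL^1\subset L^2\subset t^{-m}L^1$, set $D:=t^{-m}L^1/t^mL^1$ with $u:=t|_D$ nilpotent, and identify a neighbourhood of $(L^2,v)$ in $(\Gr\times\bVo)_i$ with the finite-type variety of pairs $(L,\bar v)$, $L$ a lattice with $t^mL^1\subset L\subset t^{-m}L^1$ and $\dim L=i$, together with $\bar v\in D/(\text{span of }L^1/t^mL^1)$; under this identification $\GO$-orbits go to $\GL(D)$-orbits on the enhanced variety, and the orbit $\Omega_{(\lambda,\mu)}$ corresponds to the enhanced nilpotent orbit labelled by the same pair of partitions (this is the equisingularity recorded in~\ref{Lusztig},~\ref{Mir}, refined to keep track of the orbit labels, which one reads off directly from the construction of the bijection in~\cite{T}). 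Hence for $(\lambda,\mu),(\lambda',\mu')\in RB^\sph_i$ one has $\Omega_{(\lambda,\mu)}\subset\bar\Omega_{(\lambda',\mu')}$ if and only if the corresponding enhanced nilpotent orbits satisfy the analogous closure relation.

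By Theorem~3.9 of~\cite{AH}, the closure order on the $\GL_n$-orbits of the enhanced nilpotent cone is given precisely by the chain of inequalities $\lambda_1\le\lambda'_1$, $\lambda_1+\mu_1\le\lambda'_1+\mu'_1$, $\lambda_1+\mu_1+\lambda_2\le\lambda'_1+\mu'_1+\lambda'_2,\ \ldots$ (one checks that the partition labels of~\cite{AH} agree, after the transpose/reindexing conventions fixed before Corollary~1 of~\cite{T}, with the labels $(\lambda,\mu)\in RB^\sph$ used here). Transporting this through the identification of the previous paragraph, $\Omega_{(\lambda,\mu)}\subset\bar\Omega_{(\lambda',\mu')}$ holds if and only if $(\lambda,\mu)\le(\lambda',\mu')$ in the order defined above, which is the assertion of the proposition.
\end{proof}
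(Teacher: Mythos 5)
Your proposal is correct and matches the paper's approach: the paper simply attributes Proposition~\ref{sph bruhat} to Theorem~3.9 of \cite{AH}, with the transfer from the enhanced nilpotent cone to $\Gr\times\bVo$ understood via the equisingularity that the paper develops in subsections~\ref{Lusztig} and~\ref{Mir} (which in fact appear just after the proposition), exactly as you describe. One imprecision to watch in your proof paragraph: the finite-type lattice variety $\{(L,\bar v): t^mL^1\subset L\subset t^{-m}L^1,\ \dim L=i\}$ you write down is not itself an enhanced nilpotent cone but rather an analogue of Lusztig's variety $\CY$ from \ref{Lusztig}; only an open piece of it (respectively the Mirkovi\'c-Vybornov transversal slice of \ref{Mir}) is isomorphic, orbit-by-orbit, to $\CN_n\times D$, and it is this isomorphism, not the naive lattice identification alone, that transports Achar--Henderson's closure order. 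Since you do defer to \ref{Lusztig}, \ref{Mir} for exactly this point, the argument is sound, but the phrasing should not suggest that the $t$-stable-subspace picture coincides with the enhanced cone on the nose.
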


Now we will describe the involution on $\hRs$ induced by the
Grothen\-dieck-Verdier duality on $\Gr\times\bVo$. Recall the elements
$(i^N,j^N)$ introduced in~\ref{sph gen}. We set
$\utH_{(i^N,j^N)}:=\sum_{k\leq0}(-\bv)^{Nk}H_{((i-k)^N,(j+k)^N)}$.
This is the class of the selfdual (geometrically constant) IC sheaf on
the closure of the orbit $\Omega_{(i^N,j^N)}$. The following
propositions are proved exactly as Propositions~\ref{aff duality}
and~\ref{aff KL basis}: 

\begin{prop}
\label{sph duality}
a) There exists a unique involution $r\mapsto\overline{r}$ on $\hRs$ such
that $\overline\utH_{(i^N,j^N)}=\utH_{(i^N,j^N)}$ for any $i,j\in\BZ$, and
$\overline{hr}=\overline{h}\overline{r}$, and
$\overline{rh}=\overline{r}\overline{h}$ for any $h\in\CH^\sph$ and $r\in\hRs$.

b) The involution in a) is induced by the Grothendieck-Verdier duality
on $\Gr\times\bVo$. 
\end{prop}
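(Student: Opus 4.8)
The plan is to follow the template set by Propositions~\ref{aff duality} and~\ref{aff KL basis}, adapting it to the spherical setting. For part a), the key point is that $\CH^\sph$ already carries the bar-involution $h\mapsto\ol h$ (sending $\bv$ to $\bv^{-1}$ and fixing each $C_\lambda$), and by Lemma~\ref{sph missing} the bimodule $\hRs$ is generated over $\CH^\sph$ by the elements $H_{(i^N,j^N)}$. Uniqueness of the involution is then immediate: any $r\in\hRs_i$ can be written (after multiplying on the left and right by suitable powers of $U^{\pm1}=C_{(1^N)}$-type generators) as a $\CH^\sph$-combination of the $H_{(i^N,j^N)}$, so the values $\overline\utH_{(i^N,j^N)}=\utH_{(i^N,j^N)}$ together with the two semilinearity rules $\overline{hr}=\overline h\,\overline r$ and $\overline{rh}=\overline r\,\overline h$ pin down $\overline r$ completely.

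For existence, I would construct $\overline{\phantom{r}}$ explicitly on generators and check it is well defined. Concretely, define $\overline{H_{(i^N,j^N)}}$ by inverting the triangular relation $\utH_{(i^N,j^N)}=\sum_{k\leq0}(-\bv)^{Nk}H_{((i-k)^N,(j+k)^N)}$ (which expresses $H_{(i^N,j^N)}$ as a finite $\BZ[\bv^{-1}]$-combination of the $\utH_{((i-k)^N,(j+k)^N)}$, $k\leq 0$), declare the $\utH_{(i^N,j^N)}$ bar-invariant, and then propagate via the module structure. The consistency that must be verified is that the two ways of reaching a given element of $\hRs$ — acting by $\CH^\sph$ on the left versus the right — give the same result after applying bar; this reduces, exactly as in the affine case, to the commutativity of left and right $\CH^\sph$-actions together with the fact that the defining relation for $\utH_{(i^N,j^N)}$ is itself bar-stable. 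The relations among the generators $H_{(i^N,j^N)}$ under the two actions come from Proposition~\ref{az} (the explicit structure constants $G^{(\lambda,\mu)}_{(1^r)(\lambda',\mu')}$ and their left analogues via $\varrho$), and one checks these relations are compatible with the proposed bar-operator by the same polynomiality-in-$\bv$ argument used throughout.

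For part b), the argument is the standard one: Grothendieck-Verdier duality on $\Gr\times\bVo$ is an additive anti-involution on $D^\Tate_\GO(\Gr\times\bVo)$ that commutes with convolution-duality on $D^\Tate_\GO(\Gr)$ in the appropriate semilinear sense (duality on $\Gr$ induces the bar-involution on $\CH^\sph$, as recalled), and it fixes the self-dual IC sheaves. On the orbit closures $\bar\Omega_{(i^N,j^N)}$, which are the spherical analogues of the smooth closures $\bar\Omega_{\tw_{i,j}}$ (here one uses Proposition~\ref{sph bruhat} to see these closures are unions of lower orbits in the predicted pattern, and that the geometrically constant IC sheaf has the stated class $\utH_{(i^N,j^N)}$), Verdier duality fixes the class $\utH_{(i^N,j^N)}$. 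Since duality is $\CH^\sph$-semilinear for both actions and the $\utH_{(i^N,j^N)}$ generate, the induced involution on $K(D^\Tate_\GO(\Gr\times\bVo))\simeq\hRs$ satisfies the characterizing properties of part a), hence coincides with it.

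The main obstacle I expect is verifying well-definedness in the existence half of part a): one must show the explicit bar-operator on generators respects \emph{all} the relations imposed by the two-sided $\CH^\sph$-action, i.e. that the structure constants of Proposition~\ref{az}, when the $U$'s are rewritten in the $H$-basis and $\bv$ is substituted, transform correctly under $\bv\mapsto\bv^{-1}$. In the affine case this was handled by citing the proof of Proposition~5 of~\cite{T} essentially verbatim; here the same strategy works because $\hRs$ embeds compatibly into (a completion of) the affine picture $\hR$ via the quotient map $\Flaff\to\Gr$ — or, more directly, because the spherical structure constants were computed in~\ref{net} by honest point-counting and are visibly polynomial, so the semilinearity check is formal. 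I would therefore present part a) by saying the proof repeats that of Proposition~\ref{aff duality} mutatis mutandis, using Lemma~\ref{sph missing} in place of Lemma~\ref{missing}, and spend whatever care is needed only on the one point where the combinatorics of $(i^N,j^N)$ (with its $\BZ^2$-worth of generators, rather than the affine $\tw_{i,j}$) differs.
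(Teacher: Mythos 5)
Your proposal is correct and follows essentially the same route the paper takes: the paper disposes of both parts by the one-line remark that they are ``proved exactly as Propositions~\ref{aff duality} and~\ref{aff KL basis}'' (themselves reductions to Proposition~5 of~\cite{T}), and your argument is precisely the mutatis-mutandis adaptation of that template -- uniqueness from Lemma~\ref{sph missing} and triangularity, existence by declaring the generators $\utH_{(i^N,j^N)}$ bar-invariant and propagating via the $\CH^\sph$-bimodule structure, and part b) by observing that Verdier duality on $\Gr\times\bVo$ is bar-semilinear for both actions and fixes the self-dual IC classes on the smooth closures $\bar\Omega_{(i^N,j^N)}$.
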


\begin{prop}
\label{sph KL basis}
a) For each $(\lambda,\mu)\in RB^\sph$ there exists a unique element 
$\utH_{(\lambda,\mu)}\in\hRs$
such that $\overline\utH_{(\lambda,\mu)}=\utH_{(\lambda,\mu)}$, and 
$\utH_{(\lambda,\mu)}\in H_{(\lambda,\mu)}+
\sum_{(\lambda',\mu')<(\lambda,\mu)}\bv^{-1}\BZ[\bv^{-1}]H_{(\lambda',\mu')}$.

b) For each $(\lambda,\mu)\in RB^\sph$ the element 
$\utH_{(\lambda,\mu)}$ is the class of the 
selfdual $\GO$-equivariant IC-sheaf with support 
$\bar{\Omega}_{(\lambda,\mu)}$.
In particular, for $(\lambda,\mu)=(i^N,j^N)$, the element $\utH_{(i^N,j^N)}$ 
is consistent with the notation introduced before 
Proposition~\ref{sph duality}.
\end{prop}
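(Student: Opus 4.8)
The plan is to imitate verbatim the proof of Propositions~\ref{aff duality} and~\ref{aff KL basis}, which in turn follow Propositions~5 and~6 of~\cite{T}, specialized to the spherical setting. For part~(a), existence and uniqueness of $\utH_{(\lambda,\mu)}$ is a formal consequence of the triangularity of the duality involution $r\mapsto\overline r$ of Proposition~\ref{sph duality} with respect to the Bruhat order of Proposition~\ref{sph bruhat}. First I would note that, since $\overline{hr}=\overline h\,\overline r$ and the generators $C_{(1^r)}$ of $\CH^\sph$ are selfdual, the involution preserves each graded piece $\hRs_i$ and, by the very construction in Proposition~\ref{sph duality} together with the action formulas coming from Proposition~\ref{az}, one has $\overline{H_{(\lambda,\mu)}}\in H_{(\lambda,\mu)}+\sum_{(\lambda',\mu')<(\lambda,\mu)}\BZ[\bv,\bv^{-1}]\,H_{(\lambda',\mu')}$; that is, the transition matrix between $\{H_{(\lambda,\mu)}\}$ and $\{\overline{H_{(\lambda,\mu)}}\}$ is unitriangular for the partial order, with only finitely many nonzero entries in each row because of the finiteness statement preceding the definition of $\hRs$. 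A standard Kazhdan--Lusztig-type lemma (see~\cite{So}, or the argument of Proposition~6 of~\cite{T}) then produces from such a unitriangular involution a unique selfdual element $\utH_{(\lambda,\mu)}\in H_{(\lambda,\mu)}+\sum_{(\lambda',\mu')<(\lambda,\mu)}\bv^{-1}\BZ[\bv^{-1}]\,H_{(\lambda',\mu')}$, by solving recursively for the coefficients down the partial order.

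For part~(b), the point is to identify this algebraically defined element with the $K$-class of the intersection cohomology sheaf. By Proposition~\ref{sph tate} the sheaf $\mathrm{IC}_{(\lambda,\mu)}:=j^{(\lambda,\mu)}_{!*}\Ql[\ell(\lambda,\mu)](\tfrac{\ell(\lambda,\mu)}{2})$ is Tate, hence defines a class in $\hRs=K(D^\Tate_\GO(\Gr\times\bVo))$. This class is selfdual because $\mathrm{IC}$ sheaves with the chosen normalization are Grothendieck--Verdier selfdual and the involution on $\hRs$ is the one induced by that duality (Proposition~\ref{sph duality}b)). Moreover, expanding $[\mathrm{IC}_{(\lambda,\mu)}]$ in the basis $\{H_{(\lambda',\mu')}\}=\{[j^{(\lambda',\mu')}_!\Ql[\ell](\tfrac{\ell}{2})]\}$ amounts to reading off the stalk cohomology of $\mathrm{IC}_{(\lambda,\mu)}$ along the strata $\Omega_{(\lambda',\mu')}$; the support condition gives $(\lambda',\mu')\leq(\lambda,\mu)$ by Proposition~\ref{sph bruhat}, the leading term is $H_{(\lambda,\mu)}$ since the restriction to the open stratum is the shifted constant sheaf, and the usual support/cosupport estimates for $\mathrm{IC}$ (the stalk of $j^{!*}$ along a smaller stratum lives in strictly negative perverse degrees) force the remaining coefficients to lie in $\bv^{-1}\BZ[\bv^{-1}]$. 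By the uniqueness in part~(a), $[\mathrm{IC}_{(\lambda,\mu)}]=\utH_{(\lambda,\mu)}$.

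Finally I would check consistency of notation for $(\lambda,\mu)=(i^N,j^N)$: the orbit $\Omega_{(i^N,j^N)}$ and its closure were analyzed before Proposition~\ref{sph duality}, where $\utH_{(i^N,j^N)}$ was defined as $\sum_{k\leq0}(-\bv)^{Nk}H_{((i-k)^N,(j+k)^N)}$ and identified with the class of the selfdual geometrically constant $\mathrm{IC}$ sheaf on $\bar\Omega_{(i^N,j^N)}$; since that expression manifestly satisfies both defining properties in~(a), the uniqueness forces the two meanings of $\utH_{(i^N,j^N)}$ to agree. This last point is essentially bookkeeping once the combinatorics of the order restricted to the chain $\{((i-k)^N,(j+k)^N)\}_{k\leq0}$ is unwound.

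I expect the main obstacle to be part~(b), specifically making precise the passage from the $K$-theoretic statement in the genuinely infinite-dimensional (ind-pro) setting of $\Gr\times\bVo$ to the finite-dimensional stalk computation: one must know that the $\mathrm{IC}$ sheaf of a Schubert variety in the mirabolic affine Grassmannian is computed, stratum by stratum, by a finite-dimensional transversal slice, and that this slice is (equisingular to) an orbit closure in the enhanced nilpotent cone. This is exactly the content invoked in~\ref{Lusztig} and~\ref{Mir} and in the discussion of~\cite{AH} in the introduction, so once those identifications are granted the argument reduces, as claimed, to repeating word for word the proofs of Propositions~\ref{aff duality} and~\ref{aff KL basis}; the purely algebraic part~(a) is routine.
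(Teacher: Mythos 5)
Your proposal is correct and follows essentially the same route the paper takes: the paper's proof consists of the single line ``proved exactly as Propositions~\ref{aff duality} and~\ref{aff KL basis},'' which in turn defer to Propositions~5 and~6 of~\cite{T}, i.e.\ exactly the Kazhdan--Lusztig formalism you spell out (unitriangularity of the bar involution for part~(a), and selfduality plus the support/cosupport estimates applied to the Tate IC sheaf of Proposition~\ref{sph tate} for part~(b)). One small remark: the worry you raise at the end is handled not by the constructions of~\ref{Lusztig} and~\ref{Mir} (those enter later, in Theorem~\ref{summer} and~\ref{past}, to compute the $\Pi$'s) but already by the $2$-limit definition of $D_\GO(\Gr\times\bVo)$ and the finiteness of $\{(\lambda',\mu')\in RB^\sph_i:\ell(\lambda',\mu')=k\}$ used to define $\hRs$, so part~(b) is no more problematic than part~(a).
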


We will write
\begin{equation}
\label{ginz}
\utH_{(\lambda,\mu)}=\sum_{(\lambda',\mu')\leq(\lambda,\mu)} 
\Pi_{(\lambda',\mu'),(\lambda,\mu)}H_{(\lambda',\mu')}.
\end{equation}
The coefficients $\Pi_{(\lambda',\mu'),(\lambda,\mu)}$ are polynomials
in $\bv^{-1}$. As we will see in subsection~\ref{fast} below, 
they coincide with
a generalization of Kostka-Foulkes polynomials introduced by Shoji 
in~\cite{Sh}. 

We define a sub-bimodule $\tR\subset\hRs$ generated ({\em not}
topologically) by the set $\utH_{(\lambda,\mu)},\ (\lambda,\mu)\in
RB^\sph$. It turns out to be a free $\CH^\sph$-bimodule of rank one:

\begin{thm}
\label{summer}
$C_\lambda\utH_{(0^N,0^N)}C_\mu=\utH_{(\lambda,\mu)}$.
\end{thm}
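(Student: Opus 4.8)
The plan is to prove $C_\lambda\utH_{(0^N,0^N)}C_\mu=\utH_{(\lambda,\mu)}$ by a double induction, reducing everything to the one-step multiplication formulas already established and to the characterization of the Kazhdan--Lusztig basis in Proposition~\ref{sph KL basis}. Since $\CH^\sph$ is commutative and freely generated by $C_{(1^r)}$ ($1\le r\le N-1$) together with $C_{\pm 1^N}=U^{\pm 1}$, and since $\utH_{(0^N,0^N)}$ is known explicitly (it is the IC sheaf on the closure of $\Omega_{(0^N,0^N)}$, already identified before Proposition~\ref{sph duality}), it suffices to show that the element $\utH_{(\lambda,\mu)}$ defined by~(\ref{ginz}) is obtained from $\utH_{(0^N,0^N)}$ by successively multiplying on the left and right by the generators $C_{(1^r)}$, $U^{\pm 1}$. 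The action of $U^{\pm 1}$ merely shifts $(\lambda,\mu)$ by $(\pm 1^N,0^N)$ or $(0^N,\pm 1^N)$ (clear from subsection~\ref{snova} and the definition of $H_{(\lambda,\mu)}$), so the whole problem is reduced to the generators $C_{(1^r)}$.

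First I would set up the geometric language: by subsection~\ref{sph gen} and Proposition~\ref{sph KL basis}, $\tR$ is spanned by classes of $\GO$-equivariant IC sheaves on the Schubert varieties $\bar\Omega_{(\lambda,\mu)}$, and $C_{(1^r)}$ is the class of the IC sheaf on the minuscule-type Schubert variety $\Gr_{(1^r)}\subset\Gr$. Left convolution with $C_{(1^r)}$ corresponds to the geometric operation $(L,v)\mapsto$ (parametrize $L'\subset L$ with $L/L'$ of colength $r$, etc.), i.e. a proper pushforward along a $\mathbb P$-type fibration-correction map. The key point will be that this convolution diagram is \emph{semismall} (indeed it should be stratified-small in the relevant range), so that the convolution of IC sheaves decomposes as a direct sum of IC sheaves with multiplicities governed by the stalks; combined with the equisingularity of mirabolic affine Schubert varieties with enhanced nilpotent orbit closures (cited in the introduction via~\cite{AH}), one gets purity and the needed parity vanishing. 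Concretely, I would argue that $C_\lambda\utH_{(0^N,0^N)}C_\mu$ is selfdual (each $C$ and $\utH_{(0^N,0^N)}$ is selfdual, and convolution of selfdual Tate sheaves on these semismall diagrams is selfdual, using Proposition~\ref{sph duality}), and that it lies in $H_{(\lambda,\mu)}+\sum_{(\lambda',\mu')<(\lambda,\mu)}\bv^{-1}\BZ[\bv^{-1}]H_{(\lambda',\mu')}$. The leading term $H_{(\lambda,\mu)}$ comes from the fact that the convolution map restricted to the open stratum is an isomorphism (by the explicit representatives in Proposition~\ref{sph mwz}, iterating the addition of $(1^r)$ recovers any $(\lambda,\mu)$), and the support estimate together with the Bruhat order of Proposition~\ref{sph bruhat} confines the lower terms to $(\lambda',\mu')<(\lambda,\mu)$ with coefficients in $\bv^{-1}\BZ[\bv^{-1}]$ by purity/parity. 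By the uniqueness clause of Proposition~\ref{sph KL basis}(a), this forces $C_\lambda\utH_{(0^N,0^N)}C_\mu=\utH_{(\lambda,\mu)}$.

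To organize the induction cleanly I would induct on $\ell(\lambda,\mu)$ (or on $|\lambda|+|\mu|$ within a fixed component $RB^\sph_i$): write $C_\lambda=C_{(1^r)}C_{\lambda''}$ for a suitable $r$ and a ``smaller'' partition $\lambda''$ using the Pieri-type structure of $\CH^\sph$ (the $C_{(1^r)}$ generate, and the transition between the $C$-basis and the monomials in $C_{(1^r)}$ is unitriangular with respect to dominance). By the inductive hypothesis $C_{\lambda''}\utH_{(0^N,0^N)}C_\mu=\utH_{(\lambda'',\mu)}$, and then I must compute $C_{(1^r)}\utH_{(\lambda'',\mu)}$ and show its leading KL-term is exactly $\utH_{(\lambda,\mu)}$ with the rest absorbed into the span of lower $\utH$'s. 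This last computation is where Propositions~\ref{az}, \ref{sph bruhat}, \ref{sph KL basis} and the explicit structure constants $G^{(\lambda,\mu)}_{(1^r)(\lambda',\mu')}$ of subsection~\ref{net} are combined, with the selfduality of $C_{(1^r)}\utH_{(\lambda'',\mu)}$ making the answer automatically a nonnegative-$\BZ[\bv,\bv^{-1}]$-combination of KL basis elements, and the top one being $\utH_{(\lambda,\mu)}$ by the open-stratum isomorphism.

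The main obstacle, I expect, is establishing that convolution with $C_{(1^r)}$ sends the $\GO$-equivariant IC sheaf on $\bar\Omega_{(\lambda'',\mu)}$ to a \emph{direct sum} of shifted IC sheaves whose top summand is precisely $IC_{\bar\Omega_{(\lambda,\mu)}}$ — i.e. the semismallness/parity-purity of the relevant mirabolic convolution diagram and the identification of its top stratum. In the non-mirabolic affine Grassmannian this is the classical geometric Satake picture (convolution with a minuscule generator is small), but here the presence of the extra vector $v\in\bVo$ perturbs the strata, and one must check that adding $v$ does not destroy semismallness and that no lower IC sheaf appears in top degree. This is exactly the input for which the authors invoke the equisingularity with enhanced nilpotent cones and the results of~\cite{AH} (parity vanishing of IC stalks), so the bulk of the work is translating that geometric input into the required decomposition; the rest is the bookkeeping of the double induction and the appeal to the uniqueness characterization of the KL basis.
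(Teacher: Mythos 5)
Your high-level strategy (selfduality plus a support/degree bound plus the uniqueness clause of Proposition~\ref{sph KL basis}) is the right framework, and you correctly identify that the whole content of the theorem is a semismallness statement for the mirabolic convolution. But you leave precisely that statement as a gap, waving towards the parity results of \cite{AH}. This is where your proposal diverges from, and falls short of, the paper's actual argument: parity vanishing of IC stalks of enhanced-nilpotent orbit closures (which is what \cite{AH} gives) is not the same as, and does not directly yield, stratified semismallness of the convolution morphism $\Gr_{(1,0,\ldots,0)}^{*l}*\bar{\Omega}_{(0^N,0^N)}*\Gr_{(1,0,\ldots,0)}^{*m}\to\Gr\times\bVo$. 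Semismallness is an assertion about the fibres of a proper map, and the paper establishes it by an entirely different mechanism.

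Concretely, the paper's proof has two ingredients you do not supply. First, a reduction trick: one only needs~(\ref{obr}) for $m,l$ divisible by $N$; if the decomposition failed (i.e.\ semismallness failed) for some $m,l$, the non-trivially shifted summands would, by self-duality and the decomposition theorem, survive under further convolution with $IC_{(1,0,\ldots,0)}$ and so would still be present for larger $m,l$ divisible by $N$, a contradiction. Second, for $n=m+l$ divisible by $N$ the paper transports the convolution diagram, via the Mirkovi\'c--Vybornov transversal slice of subsection~\ref{Mir}, to the Springer-type resolution $\pi_{n,m}\colon\tY\to\CN_n\times D$ of the enhanced nilpotent cone constructed in~\cite{FG}~5.4. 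The commutative diagram in the proof identifies (an open piece of) the mirabolic convolution diagram with $\CW\times(\id\times\pi_{n,m})$, so semismallness and the full IC decomposition~(\ref{fg}) are imported from Proposition~5.4.1 of~\cite{FG}, and Schur--Weyl duality ($\on{L}_\lambda=K_\lambda$) finishes the proof. Your proposed induction on $\ell(\lambda,\mu)$ using $C_\lambda=C_{(1^r)}C_{\lambda''}$ and the explicit structure constants of subsection~\ref{net} would require re-proving this geometric input from scratch for each $r$; it is a more laborious route, and the pivotal step (why no lower IC appears in top cohomological degree) is exactly what you haven't justified.
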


The proof will be given in subsection~\ref{winter} after we introduce the
necessary ingredients in~\ref{Lusztig} and~\ref{Mir}.

\subsection{Lusztig's construction}
\label{Lusztig}
Following Lusztig (see~\cite{L2}, section~2) 
we will prove that the $G$-orbit closures
in $\N\times V$ are equisingular to (certain open pieces of) the
$\GO$-orbit closures in $\Gr\times\bVo$. So we set
$E=V\oplus\ldots\oplus V$ ($N$ copies), and let $t:\ E\to E$ be
defined by $t(v_1,\ldots,v_N)=(0,v_1,\ldots,v_{N-1})$. Let $\CY$ be the
variety of all pairs $(E',e)$ where $E'\subset E$ is an
$N$-dimensional $t$-stable subspace, and $e\in E'$. Let $\CY_0$ be the
open subvariety of $\CY$ consisting of those pairs $(E',e)$ in which
$E'$ is transversal to $V\oplus\ldots\oplus V\oplus0$. According
to {\em loc. cit.} $\CY_0$ is isomorphic to $\CN\times V$, the
isomorphism sending $(u,v)$ to
$\left(E'=(u^{N-1}w,u^{N-2}w,\ldots,uw,w)_{w\in
    V},e=(u^{N-1}v,u^{N-2}v,\ldots,uv,v)\right)$. 
Now $E$ is naturally
isomorphic to $\left(t^{-N}\sk[[t]]/\sk[[t]]\right)\otimes V$ 
(together with the
action of $t$), and the assignment $(E',e)\mapsto(L:=E'\oplus
\sk[[t]]\otimes V,e)$ embeds $\CY$ into
$\Gr_{(N,0,\ldots,0)}\times\bV$. We will denote the composed embedding
$\CN\times V\hookrightarrow\Gr\times\bV$ by
$\psi:\ (u,v)\mapsto(L(u,v),e(u,v))$. There is an open subset 
$\CW\subset\sk[[t]]\otimes V$ with the property that for any
$w\in\CW$, and any $(u,v)\in(\CN\times V)_{(\lambda,\mu)}$ (a
$G$-orbit, see~\cite{T},~Theorem~1), we have
$(L(u,v),e(u,v)+w)\in\Omega_{(\lambda,\mu)}$ (the corresponding
$\GO$-orbit in $\Gr\times\bVo$). Moreover, the resulting embedding
$\CW\times(\CN\times V)_{(\lambda,\mu)}\hookrightarrow
\Omega_{(\lambda,\mu)}$ is an open embedding. Also, the embedding
$\CW\times\overline{(\CN\times V)}_{(\lambda,\mu)}\hookrightarrow
\bar{\Omega}_{(\lambda,\mu)}$ of the orbit closures is an open
embedding as well. Hence the Frobenius action on the IC stalks of
$\overline{(\CN\times V)}_{(\lambda,\mu)}$ is encoded in the polynomials
$\Pi_{(\lambda',\mu'),(\lambda,\mu)}$ 
introduced after Proposition~\ref{sph KL basis}.

\subsection{Mirkovi\'c-Vybornov construction}
\label{Mir}
The $\GO$-orbits $\Omega_{(\lambda,\mu)}\subset\Gr\times\bVo$
considered in subsection~\ref{Lusztig} are rather special: all the
components $\lambda_k,\mu_k$ are nonnegative integers, and
$\sum_{k=1}^N\lambda_k+\sum_{k=1}^N\mu_k=N$. To relate the
singularities of more general orbit closures
$\bar{\Omega}_{(\lambda',\mu')}$ to the singularities of orbits in the
enhanced nilpotent cones (for different groups $\GL_n,\ n\ne N$) we
need a certain generalization of Lusztig's construction, due to
Mirkovi\'c and Vybornov~\cite{MV}. 

To begin with, note that the assignment $\phi_{i,j}:\
(L,v)\mapsto(t^{-i-j}L,t^{-i}v)$ is a $\GO$-equivariant automorphism
of $\Gr\times\bVo$ sending $\Omega_{(\lambda,\mu)}$ to
$\Omega_{(\lambda+i^N,\mu+j^N)}$. Thus we may restrict ourselves to
the study of orbits $\Omega_{(\lambda,\mu)}$ with
$\lambda,\mu\in\BN^N$ without restricting generality. Geometrically,
this means to study the pairs $(L,v)$ such that $L\supset
L^1=\bO\langle e_1,\ldots,e_N\rangle$ and $L\ni v\not\in L^1$. 

Let $n=rN$ for $r\in\BN$. We consider an $n$-dimensional $\sk$-vector
space $D$ with a basis $\{e_{k,i},\ 1\leq k\leq r,\ 1\leq i\leq N\}$
and a nilpotent endomorphism $x:\ e_{k,i}\mapsto e_{k-1,i},\
e_{1,i}\mapsto0$. The Mirkovi\'c-Vybornov transversal slice is defined
as $T_x:=\{x+f,\ f\in\End(D):\ f^{l,j}_{k,i}=0$ if $k\ne r\}$.
Its intersection with the nilpotent cone of $\End(D)$ is
$T_x\cap\CN_n$.
 
Let $L^2\in\Gr$ be given as $L^2=t^{-r}L^1$. It lies in the orbit
closure $\Gr_{(n,0,\ldots,0)}$, and we will describe an open
neighbourhood $\CU$ of $L^2$ in $\Gr_{(n,0,\ldots,0)}$ isomorphic to
$T_x\cap\CN_n$. 
We choose a direct complement to $L^2$ in $\bV$ so that 
$L_2:=t^{-r-1}\sk[t^{-1}]\otimes V$. Then $\CU$ is formed by all the
lattices whose projection along $L_2$ is an isomorphism onto $L^2$.
Any $L\in\CU$ is of the form $(1+g)L^2$ where $g:\ L^2\to L_2$ is a
linear map with the kernel containing $L^1$, i.e. $g:\ L^2/L^1\to
L_2$. Now we use the natural identification of
$L^2/L^1$ with $D$ (so that the action of $t$ corresponds to the
action of $x$). Furthermore, we identify $t^{-r}V$ with a subset of
$L^2/L^1=D$. Hence we may view $g$ as a sum $\sum_{k=1}^\infty
t^{-k}g_k$ where $g_k:\ D\to t^{-r}V$ are linear maps.
Composing with $t^{-r}V\hookrightarrow D$ we may view $g_k$ as an
endomorphism of $D$. Then $L$ being a lattice is equivalent to the
condition: $g_k=g_1(t+g_1)^{k-1}$ and $t+g_1$ is nilpotent.
In other words, the desired isomorphism $T_x\cap\CN_n\iso\CU$ is of
the form:
$$T_x\cap\CN_n\ni x+f\mapsto L=L(x+f):=
\left(1+\sum_{k=1}^\infty t^{-k}f(t+f)^{k-1}\right)L^2.$$

Now we identify $D$ with $t^{-1}V\oplus\ldots\oplus t^{-r}V\subset L^2$.
Given a vector $v\in D$ we consider its image $v\in L^2$ under the
above embedding, and define $e(x+f,v)\in L(x+f)$ as the preimage of $v$
under the isomorphism $L\iso L^2$ (projection along $L_2$).
Thus we have constructed an embedding $\psi:\ (T_x\cap\CN_n)\times
D\hookrightarrow\Gr\times\bV,\ (x+f,v)\mapsto L(x+f),e(x+f,v)$.
Note that the Jordan type of any nilpotent $x+f$ is given by a
partition $\nu$ with the number of parts less than or equal to $N$.
There is an open subset 
$\CW\subset\sk[[t]]\otimes V$ with the property that for any
$w\in\CW$, and any $(x+f,v)\in((T_x\cap\CN_n)\times V)_{(\lambda,\mu)}$ (the
intersection with a $\GL_n$-orbit), we have
$(L(x+f),e(x+f,v)+w)\in\Omega_{(\lambda,\mu)}$ (the corresponding
$\GO$-orbit in $\Gr\times\bVo$). Moreover, the resulting embedding
$\CW\times((T_x\cap\CN_n)\times D)_{(\lambda,\mu)}\hookrightarrow
\Omega_{(\lambda,\mu)}$ is an open embedding. Also, the embedding
$\CW\times\overline{((T_x\cap\CN_n)\times V)}_{(\lambda,\mu)}\hookrightarrow
\bar{\Omega}_{(\lambda,\mu)}$ of the intersection with the orbit 
closure is an open embedding as well.

We conclude that the orbit closures $\bar{\Omega}_{(\lambda,\mu)}$
with $\sum_{k=1}^N\lambda_k+\sum_{k=1}^N\mu_k$ divisible by $N$ are
equisingular to certain $\GL_n$-orbit closures in $\CN_n\times D$ for
some $n$ divisible by $N$.

\subsection{Semismallness of convolution}
\label{winter}
We are ready for the proof of Theorem~\ref{summer}.
Let us denote the self-dual Goresky-MacPherson sheaf on the
orbit $\Gr_\lambda$ (whose class is $C_\lambda$) by  
$IC_\lambda$ for short. Then the convolution power
$IC_{(1,0,\ldots,0)}^{*l}$ is isomorphic to
$\oplus_{|\lambda|=l}K_\lambda\otimes IC_\lambda$ for certain vector
spaces $K_\lambda$ (equal to the multiplicities of irreducible
$\GL_N$-modules in $V^{\otimes l}$). We stress that $K_\lambda$ is
concentrated in degree 0, that is convolution morphism is stratified
semismall. 
Thus it suffices to prove

\begin{equation}
\label{obr}
IC_{(1,0,\ldots,0)}^{*l}*IC_{(0^N,0^N)}*IC_{(1,0,\ldots,0)}^{*m}\simeq 
\bigoplus_{|\mu|=m}^{|\lambda|=l}K_\mu\otimes K_\lambda\otimes
IC_{(\lambda,\mu)}.
\end{equation} 

Moreover, it suffices to prove~(\ref{obr}) for $m,l$ divisible by $N$.
In effect, this would imply that the convolution morphism 
$\Gr_{(1,0,\ldots,0)}^{*l}*\bar{\Omega}_{(0^N,0^N)}*\Gr_{(1,0,\ldots,0)}^{*m}  
\to\Gr\times\bVo$ is stratified semismall for {\em any} $m,l\geq0$. 
Indeed, if the direct image of the constant IC sheaf under the above
morphism involved some summands with nontrivial shifts in the 
derived category, the further convolution with $IC_{(1,0,\ldots,0)}$
could not possibly kill the nontrivially shifted summands (due to self-duality
and decomposition theorem), and so they
would persist for some bigger $m,l$ divisible by $N$.

Having established the semismallness for arbitrary $m,l\geq0$, we see
that the semisimple abelian category formed by direct sums of
$IC_{(\lambda,\mu)},\ (\lambda,\mu)\in RB^\sph$ is a bimodule category
over the tensor category formed by direct sums of $IC_\lambda,\
\lambda\in\gS_N^\sph$ (equivalent by Satake isomorphism to
$\on{Rep}(\GL_N)$). To specify such a bimodule category it suffices to
specify the left and right action of the generator $IC_{(1,0,\ldots,0)}$, 
and there
is only one action satisfying~(\ref{obr}) with $m,l$ divisible by $N$:
it necessarily satisfies~(\ref{obr}) for any $m,l$.  

We set $n=m+l$. The advantage of having $n$ divisible by $N$ is that
according to~\ref{Mir}, the (open part of the) orbit closure is
equisingular to certain slice of the $\GL_n$-orbit closure in
$\CN_n\times D$. To describe the convolution diagram in terms of
$\GL_n$ we need to recall a Springer type construction of~\cite{FG}~5.4. 

So $\tY$ is the smooth variety of triples $(u,F_\bullet,v)$ where
$F_\bullet$ is a complete flag in the $n$-dimensional vector space
$D$, and $u$ is a nilpotent endomorphism of $D$ such that $uF_k\subset
F_{k-1}$, and $v\in F_{n-m}$. We have a proper morphism $\pi_{n,m}:\
\tY\to\CN_n\times D$ with the image $\fY\subset\CN_n\times D$ formed
by all the pairs $(u,v)$ such that $\dim\langle
v,uv,u^2v,\ldots\rangle\leq n-m$. It follows from the proof of
Proposition~5.4.1 of {\em loc. cit.} that $\pi_{n,m}$ is a semismall
resolution of singularities, and 

\begin{equation}
\label{fg}
(\pi_{n,m})_*IC(\tY)\simeq
\bigoplus_{|\mu|=m}^{|\lambda|=n-m}\on{L}_\mu\otimes\on{L}_\lambda\otimes
IC_{(\lambda,\mu)}
\end{equation}
where $\on{L}_\mu$ (resp. $\on{L}_\lambda$) is the irreducible
representation of $\gS_m$ (resp. $\gS_{n-m}$) corresponding to the
partition $\mu$ (resp. $\lambda$); furthermore, $IC_{(\lambda,\mu)}$
is the IC sheaf of the $\GL_n$-orbit closure $\overline{(\CN_n\times
  D)}_{(\lambda,\mu)}$ (cf. Theorem~4.5 of~\cite{AH}). 

Recall the nilpotent element $x\in\CN_m$ introduced in~\ref{Mir}, and
the slice $T_x\cap\CN_n$. We will denote
$\pi_{n,m}^{-1}((T_x\cap\CN_n)\times D)$ by $T\tY\subset\tY$.
Recall the open embedding 
$\varphi:\ \CW\times((T_x\cap\CN_n)\times D)\hookrightarrow
\bar{\Omega}_{(n,0,\ldots,0),(0^N)}$ of~\ref{Mir}. 
Let us denote the convolution diagram
$\Gr_{(1,0,\ldots,0)}^{*l}*\bar{\Omega}_{(0^N,0^N)}*\Gr_{(1,0,\ldots,0)}^{*m}$
by $\tilde{\Omega}_{(l,0,\ldots,0),(m,0,\ldots,0)}$ for short; let us
denote its morphism to $\bar{\Omega}_{(n,0,\ldots,0),(0^N)}$ (with the image
$\bar{\Omega}_{(l,0,\ldots,0),(m,0,\ldots,0)}$) by $\varpi_{n,m}$. 
Finally, let us denote the preimage under $\varpi_{n,m}$ of 
$\varphi(\CW\times((T_x\cap\CN_n)\times D))$ by 
$T\tilde{\Omega}_{(l,0,\ldots,0),(m,0,\ldots,0)}$. 
The next lemma follows by comparison of definitions:

\begin{lem}
We have a commutative diagram
$$\begin{CD}
\CW\times T\tY
@>\sim>> T\tilde{\Omega}_{(l,0,\ldots,0),(m,0,\ldots,0)}\\
@VV\id\times\pi_{n,m}V  @V\varpi_{n,m}VV \\
\CW\times ((T_x\cap\CN_n)\times D) @>\varphi>> 
\bar{\Omega}_{(n,0,\ldots,0),(0^N)}
\end{CD}$$
\end{lem}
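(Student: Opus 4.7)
The plan is to construct the top horizontal isomorphism explicitly by combining the Mirkovi\'c--Vybornov description of the slice from~\ref{Mir} with the Springer-type parametrization of $\tY$ recalled just above, and then to verify commutativity by direct comparison of definitions. Both $\CW\times T\tY$ and $T\tilde\Omega_{(l,0,\ldots,0),(m,0,\ldots,0)}$ furnish resolutions of open pieces of the orbit closure $\bar\Omega_{(n,0,\ldots,0),(0^N)}$, and the lemma asserts that these two resolutions coincide.

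To build the top map, take $(w, u, F_\bullet, v)\in\CW\times T\tY$, write $u = x + f$, and form the lattice $L(u) := L(x+f)\in\CU$ by the formula of~\ref{Mir}. Under the Mirkovi\'c--Vybornov identification $L(u)/L^1\iso L^2/L^1\iso D$, multiplication by $t$ on $L(u)/L^1$ corresponds to the action of $u$ on $D$. The $u$-stable complete flag $F_\bullet$ in $D$ therefore lifts uniquely to a chain of $t$-stable $\bO$-lattices $L^1 = M_0\subset M_1\subset\cdots\subset M_n = L(u)$ with one-dimensional successive quotients, each consecutive pair sitting in $\GF$-relative position $(1,0,\ldots,0)$. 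The vector $v\in F_{n-m} = F_l$ lifts to $e(u,v)\in M_l$ via the projection $L(u)\iso L^2$ along $L_2$ from~\ref{Mir}. The resulting datum
$$\bigl(L^1 = M_0\subset M_1\subset\cdots\subset M_n = L(u),\ e(u,v) + w\bigr),$$
with the vector $e(u,v) + w$ attached at position $l$, is a point of $T\tilde\Omega_{(l,0,\ldots,0),(m,0,\ldots,0)}$; the inverse map reads $F_\bullet$ off the image of the lattice chain modulo $L^1$.

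Commutativity of the diagram is then immediate: $\varpi_{n,m}\circ(\text{top arrow})$ forgets the intermediate lattices $M_1,\ldots,M_{n-1}$ and retains $(M_n, e(u,v) + w) = (L(u), e(u,v) + w)$, which is exactly $\varphi(w, u, v)$, the image of $(w, u, F_\bullet, v)$ under $\varphi\circ(\id\times\pi_{n,m})$. The main obstacle is not the bijection of points but the verification that one has a \emph{scheme-theoretic} isomorphism, together with the correct identification of the open constraint defining $T\tilde\Omega_{(l,0,\ldots,0),(m,0,\ldots,0)}$ (namely, landing in the Mirkovi\'c--Vybornov slice after $\varpi_{n,m}$) with the constraint defining $\CW\times T\tY$. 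Both reduce to the linear-algebra observation that $t$-stable $\bO$-sublattices of $L(u)$ containing $L^1$ are in natural bijection with $u$-stable subspaces of $L(u)/L^1\cong D$, so that the Hecke-style convolution picture on one side matches the Springer-type resolution on the other step by step once the Mirkovi\'c--Vybornov slice is fixed.
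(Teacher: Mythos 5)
Your proposal is correct and is essentially the paper's argument: the paper disposes of this lemma with the single phrase ``follows by comparison of definitions,'' and what you have written is precisely that comparison carried out explicitly (lattice chains between $L^1$ and $L(u)$ matched with $u$-stable complete flags in $D\cong L(u)/L^1$, with the vector constraint at position $l=n-m$ on both sides). No gap; your identification of the only nontrivial point — that the bijection is scheme-theoretic and respects the open conditions cut out by $\CW$ and the slice — is exactly where the content lies.
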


Since $\on{L}_\lambda=K_\lambda$ by Schur-Weyl duality, the proof of
the theorem is finished. \qed

\begin{Rem}
\label{aha}
Due to Lusztig's construction of~\ref{Lusztig}, Theorem~\ref{summer} implies
Proposition~4.6 of~\cite{AH}.
\end{Rem}

\section{Mirabolic Hall bimodule}
\label{Hall}

\subsection{Recollections} The field $\sk$ is still $\Fq$.
The Hall algebra $\Hall=\Hall_N$ of all finite $\sk[[t]]$-modules
which are direct sums of 
$\leq N$ indecomposable modules is defined as in~\cite{Mac}~II.2. 
It is a quotient
algebra of the ``universal'' Hall algebra $H(\sk[[t]])$ of {\em
  loc. cit.} It has a basis $\{\fu_\lambda\}$ where $\lambda$ runs
through the set $^+\gS_N^\sph$ of partitions with $\leq N$ parts. It
is a free polynomial algebra with generators $\{\fu_{(1^r)},\ 1\leq
r\leq N-1\}$. The structure constants $G^\lambda_{\mu\nu}$ being
polynomial in $q$, we may and will view $\Hall$ as the specialization
under $\bq\mapsto q$ of a $\BZ[\bq,\bq^{-1}]$-algebra
$\bHall$. Extending scalars to $\BZ[\bv,\bv^{-1}]$ we obtain a
$\BZ[\bv,\bv^{-1}]$-algebra $\CHall$.

Let $\Lambda=\Lambda_N$ denote the ring of symmetric polynomials in
the variables $X=(X_1,\ldots,X_N)$ over $\BZ[\bv,\bv^{-1}]$. There is
an isomorphism $\Psi:\ \CHall\iso\Lambda$ sending $\fu_{(1^r)}$ to 
$\bv^{-r(r-1)}e_r$ (elementary symmetric polynomial), and
$\fu_\lambda$ to $\bv^{-2n(\lambda)}P_\lambda(X,\bv^{-2})$ (Chapter~III of
{\em loc. cit.}) where $P_\lambda(X,\bv^{-2})$ 
is the Hall-Littlewood polynomial,
and $n(\lambda)=\sum_{i=1}^N(i-1)\lambda_i$. 
Let us denote by $^+\CH^\sph$ the subalgebra of $\CH^\sph$ spanned by
$\{U_\lambda,\ \lambda\in\ ^+\gS_N^\sph\}$. Then we have a natural
identification of $^+\CH^\sph$ with $\CHall$ sending $U_\lambda$ to
$\fu_\lambda$, and $C_\lambda$ to $\fc_\lambda$. Furthermore,
$\Psi(\fc_\lambda)=(-\bv)^{-(N-1)|\lambda|}s_\lambda$ (Schur polynomial). 

\subsection{The Mirabolic Hall bimodule}
\label{fast}
A finite $\sk[[t]]$-module which is direct sum of $\leq N$
indecomposable modules is the same as a $\sk$-vector space $D$ with a
nilpotent operator $u$ with $\leq N$ Jordan blocks. The isomorphism
classes of pairs $(u,v)$ (where $v\in D$) are numbered by the set
$^+RB^\sph$ of pairs of partitions $(\lambda,\mu)$ with $\leq N$ parts
in $\lambda$ and $\leq N$ parts in $\mu$. We define the structure
constants $G^{(\lambda,\mu)}_{(\lambda',\mu')\nu}$ and
$G^{(\lambda,\mu)}_{\nu(\lambda',\mu')}$ as follows\footnote{The notation
$G^{(\lambda,\mu)}_{(\lambda',\mu')(1^r)}$ and
$G^{(\lambda,\mu)}_{(1^r)(\lambda',\mu')}$ introduced in subsection~\ref{net}
is just a particular case of the present one for $\nu=(1^r)$ as we will
see in Lemma~\ref{bim}.}.
$G^{(\lambda,\mu)}_{\nu(\lambda',\mu')}$ is the number of
$u$-invariant subspaces $D''\subset D$ such that the isomorphism type
of $u|_{D''}$ is given by $\nu$, and the isomorphism type of $(u|_{D/D''},
v\pmod{D''})$ is given by $(\lambda',\mu')$. Furthermore, 
$G^{(\lambda,\mu)}_{(\lambda',\mu')\nu}$ is the number of $u$-invariant
  subspaces $D'\subset D$ containing $v$ such that the isomorphism type
of $(u|_{D'},v)$ is given by $(\lambda',\mu')$, and the isomorphism
type of $u|_{D/D'}$ is given by $\nu$. Note that some similar
quantities were introduced in Proposition~5.8 of~\cite{AH}: in
notations of {\em loc. cit.} we have $g^{\lambda;\mu}_{\theta;\nu}
=\sum_{\lambda'+\mu'=\theta}G^{(\lambda,\mu)}_{(\lambda',\mu')\nu}$.

\begin{lem}
\label{bim}
For any $^+RB^\sph\ni(\lambda,\mu),(\lambda',\mu'),\
1\leq r\leq N-1$, the structure constants 
$G^{(\lambda,\mu)}_{(1^r)(\lambda',\mu')}$ and
$G^{(\lambda,\mu)}_{(\lambda',\mu')(1^r)}$ are given by the
formulas~(\ref{demos}) and~(\ref{zhut'}). 
\end{lem}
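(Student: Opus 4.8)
The plan is to show that the two different-looking definitions of $G^{(\lambda,\mu)}_{(1^r)(\lambda',\mu')}$ — the one from subsection~\ref{net} (in terms of $W\subset\on{Ker}(u)$ inside $D=\sk^n$ with $n=|\lambda|+|\mu|$) and the one from subsection~\ref{fast} (the Hall-bimodule structure constant $G^{(\lambda,\mu)}_{\nu(\lambda',\mu')}$ specialized to $\nu=(1^r)$) — actually agree, and similarly for the right-action constants via~(\ref{zhut'}). First I would unwind the definition of $G^{(\lambda,\mu)}_{\nu(\lambda',\mu')}$ for $\nu=(1^r)$: it counts $u$-invariant subspaces $D''\subset D$ with $u|_{D''}$ of type $(1^r)$ and $(u|_{D/D''}, v\bmod D'')$ of type $(\lambda',\mu')$. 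The key observation is that a $u$-invariant subspace on which $u$ acts as zero with $r$ Jordan blocks of size $1$ is precisely an $r$-dimensional subspace $W\subset\on{Ker}(u)$; conversely any $r$-dimensional $W\subset\on{Ker}(u)$ is $u$-stable and $u|_W=0$ of type $(1^r)$. So the two index sets of subspaces being counted are literally the same set, and moreover in both definitions one imposes exactly that the type of $(u|_{D/W}, v\bmod W)$ equals $(\lambda',\mu')$. Hence the two structure constants coincide termwise, not just after summation.

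The second step is to reconcile the normalization / range-of-validity issues. In subsection~\ref{net} the formula~(\ref{demos}) was derived under the assumption $\lambda',\mu'\in\BN^N$ (so all four partitions are genuine partitions with $N$ parts and $\lambda_1>0$), and then extended to all of $RB^\sph$ by the shift relation $G^{(\lambda,\mu)}_{(1^r)(\lambda',\mu')}=G^{(\lambda+i^N,\mu+j^N)}_{(1^r)(\lambda'+i^N,\mu'+j^N)}$ for $i,j\gg0$. In the Hall-bimodule setting of subsection~\ref{fast} the partitions have $\le N$ parts and $\lambda_1$ may be $0$; I would note that if $\lambda_1=0$ then $|\lambda|+|\mu|<$ the "expected" size and the combinatorial count still makes sense, and that the shift relation $\phi_{i,j}$ identifies this with the case $\lambda_1>0$. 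So one checks that the value produced by the Hall-bimodule definition equals, after the shift, the value given by~(\ref{demos}); this is immediate because the subspace-counting problem is manifestly invariant under replacing $(u,v)$ on $D$ by the corresponding pair on $D\oplus(\text{free part})$, i.e. under $\phi_{i,j}$. For the right-action constant $G^{(\lambda,\mu)}_{(\lambda',\mu')(1^r)}$ I would similarly unwind the definition in~\ref{fast}: it counts $u$-invariant $D'\supset\sk v$ with $(u|_{D'},v)$ of type $(\lambda',\mu')$ and $u|_{D/D'}$ of type $(1^{N-r})$ — wait, of type $\nu$ with $|\nu|$ appropriate — and show this matches~(\ref{zhut'}), which was defined precisely via the antiautomorphism $\varrho$ sending $(L_1,L_2,v)\mapsto(L_2,L_1,v)$; on the Hall side this is the duality $D\mapsto D^*$ (Macdonald's transpose/duality for finite $\sk[[t]]$-modules) interchanging sub- and quotient-counting, together with the partition operations $\lambda\mapsto\lambda^*$, which is exactly the content of the $(\mu^*-1^N,\lambda^*)$ substitution in~(\ref{zhut'}).

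I expect the main obstacle to be purely bookkeeping: carefully matching the two conventions for when a partition is padded with zeros, tracking the $-1^N$ shift in~(\ref{zhut'}) (which comes from the fact that passing to $D^*$ and dualizing $v$ shifts the "$\lambda$-part" by one because the distinguished vector $v\in\on{Ker}(u)$ on the dual side corresponds to a cyclic vector condition), and verifying that the "$v\not\in L^1$" / "$v\in D'$" conditions transcribe correctly under $\varrho$. None of this is deep, but it must be done consistently; the conceptual core — that both sides count $r$-dimensional subspaces of $\on{Ker}(u)$ with a prescribed quotient type, and that the right action is the $\varrho$-transpose of the left action — is exactly the identification already implicit in subsections~\ref{net} and~\ref{snova}. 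So the proof is essentially: \emph{unwind both definitions, observe they count the same subspaces, invoke the shift-invariance $\phi_{i,j}$ and the antiautomorphism $\varrho$ to handle normalizations and the right action.}
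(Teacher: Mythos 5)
Your proposal is essentially the argument the paper has in mind; the paper's own proof is the one-liner ``Was given in subsection~\ref{net},'' and your elaboration correctly identifies why.  For the left action you have the crux exactly: in the Hall bimodule, $G^{(\lambda,\mu)}_{\nu(\lambda',\mu')}$ for $\nu=(1^r)$ counts $u$-stable $D''\subset D$ with $u|_{D''}$ of Jordan type $(1^r)$ and prescribed quotient type, and a $u$-stable subspace on which $u$ has $r$ Jordan blocks of size one is \emph{literally} an $r$-dimensional subspace of $\on{Ker}(u)$, which is precisely the object counted in subsection~\ref{net} when deriving~(\ref{demos}).  The shift-invariance under $\phi_{i,j}$ handles the passage between the ``$\lambda_1>0$, all four partitions in $\BN^N$'' normalization of subsection~\ref{net} and the ``$\le N$ parts, possibly zero'' convention of~$^+RB^\sph$.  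So the two structure constants coincide termwise, as you say.

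One remark on your treatment of the right-action constant: the quickest clean route, closer to what the paper itself implicitly uses, is simply to observe (by the same definition-unwinding as for the left action) that the Hall-bimodule right count of $u$-stable $D'\ni v$ with $u|_{D/D'}$ of type $(1^r)$ is exactly the right-convolution structure constant of $R^\sph$ for multiplication by $U_{(1^r)}$ on the right, and then invoke the second half of Proposition~\ref{az}, which already asserts this equals~(\ref{zhut'}).  Your alternative route --- directly dualizing the subspace-counting problem --- is also viable but you should be careful with terminology: the operation in~(\ref{zhut'}) is the $\varrho$ coming from swapping the two lattices $(L_1,L_2,v)\mapsto(L_2,L_1,v)$, which on partitions acts by $\nu\mapsto\nu^*=(-\nu_N,\ldots,-\nu_1)$ (reverse-and-negate), \emph{not} the transpose/conjugate partition of Macdonald's duality.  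Also note the small slip where you first wrote that $u|_{D/D'}$ should have type $(1^{N-r})$; it has type $(1^r)$, and the $(1^{N-r})$ only appears on the other side of~(\ref{zhut'}) after applying $\varrho$, because $(1^r)^*+1^N=(1^{N-r})$.  None of this affects the soundness of your overall plan, and the conceptual core of your argument --- unwind, observe the counting problems coincide, use $\phi_{i,j}$ and $\varrho$ for normalization and the right action --- is exactly what the paper intends.
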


\begin{proof} 
Was given in subsection~\ref{net}.
\end{proof}

%\begin{Rem}
%\label{bimo}
%The structure constants $G^{(\lambda,\mu)}_{(\lambda',\mu')\nu}$ and
%$G^{(\lambda,\mu)}_{\nu(\lambda',\mu')}$ coincide with the structure
%  constants in the spherical mirabolic bimodule, just by comparison of
%  definitions. Hence the proof of Lemma~\ref{bim} simultaneously proves
%Proposition~\ref{az}. \qed
%\end{Rem}

We define the Mirabolic Hall bimodule $\Mall$ over $\Hall$ to have a
$\BZ$-basis $\{\fu_{(\lambda,\mu)},\ (\lambda,\mu)\in\ ^+RB^\sph\}$
and the structure constants
$$\fu_\nu\fu_{(\lambda',\mu')}=\sum_{(\lambda,\mu)\in\ ^+RB^\sph} 
G^{(\lambda,\mu)}_{\nu(\lambda',\mu')}\fu_{(\lambda,\mu)},\
\fu_{(\lambda',\mu')}\fu_\nu=\sum_{(\lambda,\mu)\in\ ^+RB^\sph} 
G^{(\lambda,\mu)}_{(\lambda',\mu')\nu}\fu_{(\lambda,\mu)}$$
The structure constants $G^{(\lambda,\mu)}_{(\lambda',\mu')(1^r)}$ and
$G^{(\lambda,\mu)}_{(1^r)(\lambda',\mu')}$ for the generators of
  $\Hall$ being polynomial in $q$, we may and will view $\Mall$ as the
  specialization under $\bq\mapsto q$ of a
  $\BZ[\bq,\bq^{-1}]$-bimodule $\bMall$ over the
  $\BZ[\bq,\bq^{-1}]$-algebra $\bHall$. Extending scalars to
  $\BZ[\bv,\bv^{-1}]$ we obtain a $\BZ[\bv,\bv^{-1}]$-bimodule  
$\CMall$ over the  $\BZ[\bv,\bv^{-1}]$-algebra $\CHall$.
Let us denote by $^+\CR^\sph$ the $^+\CH^\sph$-subbimodule of
$\CR^\sph$ spanned by $\{U_{(\lambda,\mu)},\ (\lambda,\mu)\in\
^+RB^\sph\}$. Then we have a natural identification of $^+\CR^\sph$
with $\CMall$ sending $U_{(\lambda,\mu)}$ to $\fu_{(\lambda,\mu)}$.
For $(\lambda,\mu)\in\ ^+RB^\sph$ we set $^+C_{(\lambda,\mu)}:=
\sum_{^+RB^\sph\ni(\lambda',\mu')\leq(\lambda,\mu)} 
\Pi_{(\lambda',\mu'),(\lambda,\mu)}H_{(\lambda',\mu')}$
(notation introduced after Proposition~\ref{sph KL basis}). We define
$\fc_{(\lambda,\mu)}\in\CMall$ as the element corresponding to
$^+C_{(\lambda,\mu)}$ under the above identification. 

Theorem~\ref{summer} admits the following corollary:

\begin{cor}
\label{fall} For any $\lambda,\mu\in\ ^+\gS^\sph_N$ we have 
$\fc_\lambda\fc_{(0^N,0^N)}\fc_\mu=\fc_{(\lambda,\mu)}$.
\end{cor}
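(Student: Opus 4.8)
The plan is to deduce Corollary~\ref{fall} directly from Theorem~\ref{summer} by transporting the statement through the chain of identifications already established in the excerpt. First I would recall that subsection~\ref{fast} sets up a natural identification of the subbimodule $^+\CR^\sph\subset\CR^\sph$ (spanned by $U_{(\lambda,\mu)}$ with $(\lambda,\mu)\in\ ^+RB^\sph$) with $\CMall$, under which $U_{(\lambda,\mu)}\mapsto\fu_{(\lambda,\mu)}$, and of $^+\CH^\sph$ with $\CHall$, under which $U_\lambda\mapsto\fu_\lambda$ and $C_\lambda\mapsto\fc_\lambda$. By the very definition of $\fc_{(\lambda,\mu)}$ as the element corresponding to $^+C_{(\lambda,\mu)}=\sum_{(\lambda',\mu')\leq(\lambda,\mu)}\Pi_{(\lambda',\mu'),(\lambda,\mu)}H_{(\lambda',\mu')}$, the statement $\fc_\lambda\fc_{(0^N,0^N)}\fc_\mu=\fc_{(\lambda,\mu)}$ is literally the image under this identification of the identity $C_\lambda\cdot{}^+C_{(0^N,0^N)}\cdot C_\mu={}^+C_{(\lambda,\mu)}$ in $^+\CR^\sph$. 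So the whole task reduces to showing that Theorem~\ref{summer}, which asserts $C_\lambda\utH_{(0^N,0^N)}C_\mu=\utH_{(\lambda,\mu)}$ inside the completed bimodule $\hRs$, descends to this finite sub-bimodule statement.

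The key step is therefore to check the two compatibilities. (i) For $\lambda,\mu$ partitions with $\leq N$ parts, $\utH_{(0^N,0^N)}$ and $\utH_{(\lambda,\mu)}$ actually lie in $^+\CR^\sph$, i.e. their expansions~(\ref{ginz}) only involve $H_{(\lambda',\mu')}$ with $(\lambda',\mu')\in\ ^+RB^\sph$; this follows from Proposition~\ref{sph bruhat} together with the fact that on the connected component $RB^\sph_i$ the Bruhat order $(\lambda',\mu')\leq(\lambda,\mu)$ forces $\lambda'_1\leq\lambda_1$, $\lambda'_1+\mu'_1\leq\lambda_1+\mu_1$, etc.\ — so if $(\lambda,\mu)$ has all nonnegative parts then so does every $(\lambda',\mu')$ below it (and in particular the infinite tail in the definition of $\utH_{(i^N,j^N)}$, which involves $((i-k)^N,(j+k)^N)$ for $k\leq 0$, reduces when $i=j=0$ to just $k=0$, so $\utH_{(0^N,0^N)}=H_{(0^N,0^N)}$ already lies in $^+\CR^\sph$). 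Hence $^+C_{(\lambda,\mu)}$ is nothing but $\utH_{(\lambda,\mu)}$ viewed in $^+\CR^\sph$. (ii) For $\lambda\in\ ^+\gS_N^\sph$ the element $C_\lambda$ lies in $^+\CH^\sph$ and the action of $^+\CH^\sph$ preserves $^+\CR^\sph$ — this is immediate from the structure-constant description, since multiplying $U_{(\lambda',\mu')}$ (with $\lambda',\mu'$ partitions) by $U_\nu$ (with $\nu$ a partition) only produces $U_{(\lambda,\mu)}$ with $\lambda,\mu$ partitions, again by the explicit $G^{(\lambda,\mu)}_{\nu(\lambda',\mu')}$ of subsection~\ref{net} and~\ref{fast}.

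Granting (i) and (ii), the identity $C_\lambda\utH_{(0^N,0^N)}C_\mu=\utH_{(\lambda,\mu)}$ of Theorem~\ref{summer} holds already inside $^+\CR^\sph$, and applying the identification $^+\CR^\sph\iso\CMall$, which is an isomorphism of $\CHall\iso\ {}^+\CH^\sph$-bimodules and carries $C_\lambda\mapsto\fc_\lambda$, $\utH_{(\lambda,\mu)}$ (i.e.\ $^+C_{(\lambda,\mu)}$) $\mapsto\fc_{(\lambda,\mu)}$, and $\utH_{(0^N,0^N)}\mapsto\fc_{(0^N,0^N)}$, yields exactly $\fc_\lambda\fc_{(0^N,0^N)}\fc_\mu=\fc_{(\lambda,\mu)}$. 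I do not expect any serious obstacle: the only thing requiring care is the bookkeeping in (i), namely confirming that the completion $\hRs$ plays no role here because the relevant elements are honest finite combinations once all parts are nonnegative, and that the Bruhat order genuinely stays within the ``partition'' region — both of which are direct consequences of Proposition~\ref{sph bruhat} and the definition of the order on $RB^\sph_i$. Everything else is transport of structure.
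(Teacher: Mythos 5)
The paper offers no proof of Corollary~\ref{fall} beyond the phrase ``Theorem~\ref{summer} admits the following corollary,'' so there is no paper argument to compare against in detail; but your proposed deduction contains a genuine gap in step~(i), which is the load-bearing step.

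You claim that for $(\lambda,\mu)\in\ ^+RB^\sph$ the element $\utH_{(\lambda,\mu)}$ lies in $^+\CR^\sph$, i.e.\ that the Bruhat order of Proposition~\ref{sph bruhat} cannot take a pair of honest partitions below to one with a negative part, and in particular that $\utH_{(0^N,0^N)}=H_{(0^N,0^N)}$. This is false. The orbit $\Omega_{(0^N,0^N)}$ is $\{L^1\}\times(L^1\setminus tL^1)$, whose closure in $\Gr\times\bVo$ is $\{L^1\}\times(L^1\setminus\{0\})$ — an infinite union of $\GO$-orbits $\Omega_{((-m)^N,\,m^N)}$ for $m\geq 0$. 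For $m>0$ the first entry $\lambda'=(-m)^N$ has negative parts, so these orbits lie outside $^+RB^\sph$. Correspondingly, the order in Proposition~\ref{sph bruhat} only propagates nonnegativity of $\mu'$ downward (from the last partial-sum inequality one gets $\mu'_N\geq\mu_N\geq 0$), but it does \emph{not} control $\lambda'_N$: e.g.\ $((-m)^N,m^N)\leq(0^N,0^N)$ for all $m\geq 0$. The paper's own formula $\utH_{(i^N,j^N)}=\sum_{k\leq 0}(-\bv)^{Nk}H_{\cdots}$, an infinite sum for each $(i,j)$, already shows that $\utH_{(0^N,0^N)}$ is \emph{not} $H_{(0^N,0^N)}$ and is not an element of $\CR^\sph$, only of the completion $\hRs$. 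Consequently $^+C_{(\lambda,\mu)}$ is a genuine truncation of $\utH_{(\lambda,\mu)}$, not an equality, and your reduction ``$\utH_{(\lambda,\mu)}={}^+C_{(\lambda,\mu)}$ in $^+\CR^\sph$'' breaks down.

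This is not a cosmetic issue, because the truncation $\hRs\twoheadrightarrow\ ^+\CR^\sph$ (kill $H_{(\lambda',\mu')}$ with $\lambda'\not\geq 0$) is \emph{not} a map of $^+\CH^\sph$-bimodules. Already for $N=1$, left multiplication by $U_{(1)}$ sends $U_{(-1,1)}$ (outside $^+RB^\sph$) to $U_{(0,1)}$ (inside), so one cannot project the infinite identity $C_\lambda\utH_{(0^N,0^N)}C_\mu=\utH_{(\lambda,\mu)}$ onto $^+\CR^\sph$ term by term. What one actually needs to show is that the ``tail'' $C_\lambda\bigl(\utH_{(0^N,0^N)}-H_{(0^N,0^N)}\bigr)C_\mu$ contributes nothing to the $^+RB^\sph$-components of $\utH_{(\lambda,\mu)}$; that is a nontrivial positivity/filtration statement, not a formal consequence of the Bruhat order. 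A cleaner route is to bypass the Grassmannian altogether and deduce the statement from the sheaf isomorphism~(\ref{fg}) on $\CN_n\times D$ established inside the proof of Theorem~\ref{summer} (which is exactly where the transition matrix $\Pi_{(\lambda',\mu'),(\lambda,\mu)}$ over $^+RB^\sph$ lives, via the Mirkovi\'c--Vybornov slice of~\ref{Mir}), or, equivalently, to invoke Proposition~4.6 of~\cite{AH} as indicated in Remark~\ref{aha}. Your step~(ii), that $^+\CH^\sph$ preserves $^+\CR^\sph$, is correct and indeed needed; it is step~(i) that must be replaced by an actual argument controlling the tail.
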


Hence there is a unique isomorphism $\Psi:\
\CMall\iso\Lambda\otimes\Lambda$ of $\CHall\simeq\Lambda$-bimodules
sending $\fc_{(\lambda,\mu)}$ to
$(-\bv)^{-(N-1)(|\lambda|+|\mu|)}s_\lambda\otimes s_\mu$. 
We define $$\Lambda\otimes\Lambda\ni P_{(\lambda,\mu)}(X,Y,\bv^{-1}):= 
(-\bv)^{2n(\lambda)+2n(\mu)+|\mu|}\Psi(\fu_{(\lambda,\mu)})$$
(mirabolic Hall-Littlewood polynomials).

Thus the polynomials $\Pi_{(\lambda',\mu'),(\lambda,\mu)}$ are the
matrix coefficients of the transition matrix from the basis 
$\{P_{(\lambda,\mu)}(X,Y,\bv^{-1})\}$ to the basis
$\{s_\lambda(X)s_\mu(Y)\}$ of $\Lambda\otimes\Lambda$. 
It follows from Theorem~5.2 of~\cite{AH} that the mirabolic
Hall-Littlewood polynomial $P_{(\lambda,\mu)}(X,Y,\bv^{-1})$ coincides
with Shoji's Hall-Littlewood function
$P^\pm_{(\lambda,\mu)}(X,Y,\bv^{-1})$ (see section~2.5 and 
Theorem~2.8 of~\cite{Sh}).

\section{Frobenius traces in mirabolic character sheaves}

\subsection{Unipotent mirabolic character sheaves}
\label{unip}
Recall the construction of certain mirabolic character sheaves
in~\cite{FG}~5.4. So $\tX$ is the smooth variety of triples
$(g,F_\bullet,v)$ where $F_\bullet$ is a complete flag in an 
$n$-dimensional vector
space $D$, and $v\in F_m$, and $g$ is an invertible linear
transformation of $D$ preserving $F_\bullet$. We have a proper
morphism $\pi_{n,m}:\ \tX\to\GL_n\times D$ with the image
$\fX\subset\GL_n\times D$ formed by all the pairs $(g,v)$ such that
$\dim\langle v,gv,g^2v,\ldots\rangle\leq n-m$. According to
Corollary~5.4.2 of {\em loc. cit.}, we have 

\begin{equation}
\label{last}
(\pi_{n,m})_*IC(\tX)\simeq\bigoplus^{|\lambda|=n-m}_{|\mu|=m}
\on{L}_\mu\otimes\on{L}_\lambda\otimes\CF_{\lambda,\mu}
\end{equation}
for certain irreducible perverse mirabolic character sheaves
$\CF_{\lambda,\mu}$ on $\GL_n\times D$. 

Following~[AH], we set $b(\lambda,\mu):=2n(\lambda)+2n(\mu)+|\mu|$, 
so that $b(\lambda',\mu')-b(\lambda,\mu)$ equals the codimension of 
$\Omega_{(\lambda',\mu')}$ in $\bar{\Omega}_{(\lambda,\mu)}$, and
the codimension of 
$(\CN_n\times D)_{(\lambda',\mu')}$ in 
$\overline{(\CN_n\times D)}_{(\lambda,\mu)}$.

\begin{thm}
\label{past}
Let $(u,v)\in(\CN_n\times D)_{(\lambda',\mu')}(\Fq)$.
The trace of Frobenius automorphism of the stalk of
$\CF_{\lambda,\mu}$ at $(u,v)$ equals
${\sqrt{q}}^{b(\lambda',\mu')-b(\lambda,\mu)}
\Pi_{(\lambda',\mu'),(\lambda,\mu)}(\sqrt{q}) (see~(\ref{ginz}))$.
\end{thm}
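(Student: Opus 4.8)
The plan is to deduce Theorem~\ref{past} from the geometric results already assembled in the excerpt, chiefly the Mirkovi\'c--Vybornov equisingularity statement of~\ref{Mir} and the identification of the Kazhdan--Lusztig basis of $\hRs$ in Proposition~\ref{sph KL basis}, together with the Springer-type resolution~(\ref{fg}) underlying the definition of $\CF_{\lambda,\mu}$ in~(\ref{last}). First I would compare the two Springer constructions: the variety $\tY$ of~\ref{winter} (triples $(u,F_\bullet,v)$ with $u$ nilpotent) is literally the fiber over the unipotent locus of the variety $\tX$ of~\ref{unip} (triples $(g,F_\bullet,v)$ with $g$ invertible preserving $F_\bullet$), under $g=1+u$. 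Hence the decomposition~(\ref{fg}) of $(\pi_{n,m})_*IC(\tY)$ is the restriction to $\CN_n\times D$ of the decomposition~(\ref{last}) of $(\pi_{n,m})_*IC(\tX)$, and comparing multiplicity spaces identifies the restriction of the mirabolic character sheaf $\CF_{\lambda,\mu}$ to $\CN_n\times D$ with the IC sheaf $IC_{(\lambda,\mu)}$ of the orbit closure $\overline{(\CN_n\times D)}_{(\lambda,\mu)}$, up to the appropriate shift/twist dictated by $b(\lambda,\mu)$.

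Next I would translate the stalk computation into the spherical mirabolic bimodule. By~\ref{Lusztig} (Lusztig's construction) and its generalization~\ref{Mir}, the orbit closure $\overline{(\CN_n\times D)}_{(\lambda,\mu)}$ is equisingular (after crossing with a smooth open $\CW$ and restricting to an open subset) to the $\GO$-orbit closure $\bar\Omega_{(\lambda,\mu)}\subset\Gr\times\bVo$; moreover this equisingularity is compatible with the Frobenius structure because all the identifications are maps of varieties over $\Fq$. Therefore the Frobenius trace of the stalk of $IC_{(\lambda,\mu)}$ at a point of type $(\lambda',\mu')$ equals the Frobenius trace of the stalk of the $\GO$-equivariant IC sheaf $\utH_{(\lambda,\mu)}$ (in the notation of Proposition~\ref{sph KL basis}) at a point of $\Omega_{(\lambda',\mu')}$. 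By construction of the completed bimodule $\hRs$ and the expansion~(\ref{ginz}), that stalk trace is exactly $\Pi_{(\lambda',\mu'),(\lambda,\mu)}$ evaluated at $\bv=\sqrt q$, up to the normalization coming from the shift $\ell(\lambda,\mu)-\ell(\lambda',\mu')$; one checks that this shift matches $b(\lambda',\mu')-b(\lambda,\mu)$, which is precisely the codimension statement recorded just before the theorem.

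Assembling these two steps: the trace of Frobenius on the stalk of $\CF_{\lambda,\mu}$ at $(u,v)\in(\CN_n\times D)_{(\lambda',\mu')}(\Fq)$ equals ${\sqrt q}^{\,b(\lambda',\mu')-b(\lambda,\mu)}$ times the coefficient $\Pi_{(\lambda',\mu'),(\lambda,\mu)}$ of $H_{(\lambda',\mu')}$ in $\utH_{(\lambda,\mu)}$, evaluated at $\sqrt q$, which is the asserted formula. (Here I use Theorem~\ref{summer} only indirectly, via the coherence of the KL basis with the bimodule structure; the essential input is the pointwise purity/Tateness from Proposition~\ref{sph tate} and the parity of IC stalks proved in~\cite{AH}, which guarantee that the Frobenius eigenvalues on the stalks are the expected powers of $\sqrt q$ and that the trace is literally the value of the Kostka-type polynomial.)

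The main obstacle I expect is the careful bookkeeping of shifts, twists, and Tate normalizations so that the power of $\sqrt q$ comes out to be exactly $b(\lambda',\mu')-b(\lambda,\mu)$ and not some other linear combination of $n$, $m$, $|\lambda|$, $|\mu|$. Concretely one must reconcile three normalizations: the shift $[\ell(\lambda,\mu)](\tfrac{\ell(\lambda,\mu)}{2})$ used to define $H_{(\lambda,\mu)}$ in $\hRs$ via $\ell(\lambda,\mu)=d(\nu)+|\lambda|$ (subsection~\ref{sph gen}); the statistic $b(\lambda,\mu)=2n(\lambda)+2n(\mu)+|\mu|$ from~\cite{AH} used in~(\ref{last}); and the intrinsic perverse normalization of $\CF_{\lambda,\mu}$ coming from $IC(\tX)$ with $\dim\tX$ computed from $\tY$. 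The equisingularity of~\ref{Mir} only guarantees equality of IC \emph{stalks} up to a global shift on each orbit closure, so one pins down that global shift by evaluating at the open orbit $(\lambda,\mu)=(\lambda',\mu')$, where $\Pi_{(\lambda,\mu),(\lambda,\mu)}=1$ and the trace must be $1$; this forces the normalization and then propagates to all smaller orbits by the already-established purity. Beyond this accounting the argument is essentially a concatenation of the cited results, so once the normalizations are fixed the proof is short.
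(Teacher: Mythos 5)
Your proposal follows the same approach as the paper's proof: identify $\CF_{\lambda,\mu}|_{\CN_n\times D}$ with $IC_{(\lambda,\mu)}$ by comparing the two Springer decompositions~(\ref{fg}) and~(\ref{last}), then read off the Frobenius trace of the IC stalks from the polynomials $\Pi_{(\lambda',\mu'),(\lambda,\mu)}$ via the equisingularity of~\ref{Lusztig}. Your extra care in pinning down the global shift by evaluating at the open orbit is more explicit than the paper, which simply asserts the factor ${\sqrt q}^{\,b(\lambda',\mu')-b(\lambda,\mu)}$ ``by construction,'' but the underlying argument is identical.
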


\begin{proof} 
We identify the nilpotent cone $\CN_n$ and the variety of unipotent 
elements of $\GL_n$ by adding the identity matrix, 
so that we may view $\CN_n\subset\GL_n$. Then
$\fX\cap(\CN_n\times D)=\fY$, and $\pi_{n,m}^{-1}(\fX\cap(\CN_n\times
D))=\tY$ (notations of the proof of
Theorem~\ref{summer}). Comparing~(\ref{last}) with~(\ref{fg}), we see
that $\CF_{\lambda,\mu}|_{\CN_n\times D}\simeq IC_{(\lambda,\mu)}$.  
Hence the trace of Frobenius in the stalk of
$\CF_{\lambda,\mu}$ at $(u,v)$ equals the trace of Frobenius in the
stalk of $IC_{(\lambda,\mu)}$ at $(u,v)$. The latter is equal to the matrix
coefficient of the transition matrix from the basis 
$\{j_!\Ql_{(\CN_n\times D)_{(\lambda',\mu')}}[n^2-b(\lambda',\mu')]\}$ 
to the basis
$\{j_{!*}\Ql_{(\CN_n\times D)_{(\lambda,\mu)}}[n^2-b(\lambda,\mu)]\}$.
And the latter by construction, up to the factor of
${\sqrt{q}}^{b(\lambda',\mu')-b(\lambda,\mu)}$, is equal to 
$\Pi_{(\lambda',\mu'),(\lambda,\mu)}(\sqrt{q})$. \end{proof}

\subsection{$\Gm$-equivariant mirabolic character sheaves}
More generally, we recall the notion~\cite{FG2} of mirabolic character sheaves 
equivariant with respect to the dilation action of $\Gm$ on $D$. 
Let $\CB$ be the flag variety of $\GL(D)$, let $\tB$ be the base
affine space of $\GL(D)$, so that $\tB\to\CB$ is a
$\GL(D)$-equivariant $H$-torsor, where $H$ is the abstract Cartan of
$\GL(D)$. Let $\CY$ be the quotient of $\tB\times\tB$ modulo the
diagonal action of $H$; it is called the horocycle space of
$\GL(D)$. Clearly, $\CY$ is an $H$-torsor over $\CB\times\CB$ with
respect to the right action $(\tilde{x}_1,\tilde{x}_2)\cdot
h:=(\tilde{x_1}\cdot h,\tilde{x}_2)$. We consider the following
diagram of $\GL(D)$-varieties and $\GL(D)\times\Gm$-equivariant maps:
$$\GL(D)\times D\stackrel{pr}{\longleftarrow}\GL(D)\times\CB\times D
\stackrel{f}{\longrightarrow}\CY\times D.$$
In this diagram, the map $pr$ is given by $pr(g,x,v):=(g,v)$. To
define the map $f$, we think of $\CB$ as $\tB/H$, and for a
representative ${\tilde x}\in\tB$ of $x\in\CB$ we set
$f(g,x,v):=(g{\tilde x},{\tilde x},gv)$. The group $\GL(D)$ acts
diagonally on all the product spaces in the above diagram, and acts on
itself by conjugation. The group $\Gm$ acts by dilations on $D$.

The functor $\Char$ from the constructible
derived category of $l$-adic sheaves on $\CY\times D$ to the
constructible derived category of $l$-adic sheaves on $\GL(D)\times D$
is defined as $\Char:=pr_*f^![-\dim\CB]$. Now let $\CF$ be a
$\GL(D)\times\Gm$-equivariant, $H$-monodromic perverse sheaf on
$\CY\times D$. The irreducible perverse constituents of $\Char\CF$ are
called $\Gm$-equivariant mirabolic character sheaves on $\GL(D)\times
D$. We define a $\Gm$-equivariant mirabolic character sheaf as a
direct sum of the above constituents for various $\CF$ as above.
The semisimple abelian category of $\Gm$-equivariant mirabolic
character sheaves will be denoted $\MC(\GL(D)\times D)$.
Clearly, this definition is a direct analogue of Lusztig's definition
of character sheaves. The semisimple abelian category of character
sheaves on $\GL(D)$ will be denoted $\calC(\GL(D))$.

\subsection{Left and right induction}
\label{lr}
Following Lusztig's construction of {\em induction} of character
sheaves, we define the left and right action of Lusztig's character
sheaves on the mirabolic character sheaves (for varying $D$).
To this end it will be notationally more convenient to consider 
$\MC(\GL(D)\times D)$ (resp. $\calC(\GL(D))$) as a category of perverse
sheaves on the quotient stack $\GL(D)\backslash(\GL(D)\times D)$
(resp. $\GL(D)\backslash\GL(D)$). Let $m\leq n=\dim(D)$, and let $V$
be an $n-m$-dimensional $\sk$-vector space, and let $W$ be an
$m$-dimensional $\sk$-vector space. 
We have the following diagrams:

\begin{equation}
\label{left}
\GL(D)\backslash(\GL(D)\times D)\stackrel{p}{\longleftarrow}
A\stackrel{q}{\longrightarrow}\GL(V)\backslash\GL(V)\ \times\
\GL(W)\backslash(\GL(W)\times W),
\end{equation}

\begin{equation}
\label{right}
\GL(D)\backslash(\GL(D)\times D)\stackrel{d}{\longleftarrow}
B\stackrel{b}{\longrightarrow}\GL(V)\backslash(\GL(V)\times V)\ \times\
\GL(W)\backslash\GL(W).
\end{equation}
Here $A$ is the quotient stack of $\tilde A$ by the action of
$\GL(D)$, and $${\tilde A}:=\{(g\in\GL(D), F\subset D, v\in D)\
\on{such}\ \on{that}\ \dim F=n-m,\ \on{and}\ gF=F\},$$
and, $p$ forgets $F$, and $q$ sends $(g,F,v)$ to
$g|_F;(g|_{D/F}, v\pmod{F})$ (under an arbitrary identification
$V\simeq F,\ W\simeq D/F$). Note that $p$ is proper, and $q$ is smooth
of relative dimension $n-m$.

Furthermore,  $B$ is the quotient stack of $\tilde B$ by the action of
$\GL(D)$, and $${\tilde B}:=\{(g\in\GL(D), F\subset D, v\in F)\
\on{such}\ \on{that}\ \dim F=n-m,\ \on{and}\ gF=F\},$$
and, $d$ forgets $F$, and $b$ sends $(g,F,v)$ to
$(g|_F,v);g|_{D/F})$ (under an arbitrary identification
$V\simeq F,\ W\simeq D/F$). Note that $d$ is proper, and $b$ is smooth
of relative dimension 0.

Finally, for a character sheaf $\CG\in\calC(\GL(V))$ and a mirabolic
character sheaf $\CF\in\MC(\GL(W)\times W)$ we define the {\em left}
convolution $\CG*\CF:=p_!q^*(\CG\boxtimes\CF)[n-m]$. Similarly,
for a character sheaf $\CG'\in\calC(\GL(W))$ and a mirabolic
character sheaf $\CF'\in\MC(\GL(V)\times V)$ we define the {\em right}
convolution $\CF'*\CG':=d_!b^*(\CF'\boxtimes\CG')$.

Note that the definition of convolution works in the extreme cases 
$m=0$ or $n-m=0$ as well: if $\dim V=0$, then $\GL(V)$ is just the
trivial group.
The following proposition is proved like Proposition~4.8.b) in~\cite{L3}.

\begin{prop}
\label{refer}
Both $\CG*\CF$ and $\CF'*\CG'$ are $\Gm$-equivariant mirabolic
character sheaves on $\GL(D)\times D$.
\end{prop}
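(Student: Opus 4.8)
The plan is to imitate Lusztig's proof that parabolic induction of character sheaves is again a character sheaf (\cite{L3},~Proposition~4.8.b)), transported to the mirabolic setting and carried out on the horocycle side. By the definitions recalled above, every $\CF\in\MC(\GL(W)\times W)$ is a direct summand of $\Char(\mathcal K)$ for some $\GL(W)\times\Gm$-equivariant, $H_W$-monodromic perverse sheaf $\mathcal K$ on $\CY_W\times W$, and every character sheaf $\CG\in\calC(\GL(V))$ is a direct summand of $\on{ch}_V(\mathcal K')$ for some $\GL(V)$-equivariant, $H_V$-monodromic perverse sheaf $\mathcal K'$ on the horocycle space $\CY_V$, where $\on{ch}_V$ is Lusztig's horocycle functor for $\GL(V)$ (given by the same recipe $pr_*f^![-\dim\CB_V]$). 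Since $p_!$, $q^*$, exterior product, and the functors $\Char$ and $\on{ch}$ all commute with passage to direct summands, it suffices to prove that $\on{ch}_V(\mathcal K')*\Char(\mathcal K)$ — and, symmetrically, the right convolution $\CF'*\CG'$ of a mirabolic character sheaf $\CF'$ on $\GL(V)\times V$ with an ordinary character sheaf $\CG'$ on $\GL(W)$ — is isomorphic to $\Char$ of a $\GL(D)\times\Gm$-equivariant, $H_D$-monodromic perverse sheaf on $\CY\times D$.

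The heart of the matter is a single commutative diagram tying together: the mirabolic horocycle diagram for $\GL(D)$, i.e. $\GL(D)\times D\xleftarrow{pr}\GL(D)\times\CB\times D\xrightarrow{f}\CY\times D$; the left induction diagram~(\ref{left}); the product of Lusztig's horocycle diagram for $\GL(V)$ with the mirabolic horocycle diagram for $\GL(W)$; and a parabolic horocycle-induction correspondence on $\CY\times D$. Concretely, fix the parabolic $P\subset\GL(D)$ stabilizing an $(n-m)$-dimensional subspace of $D$, with Levi quotient identified with $\GL(V)\times\GL(W)$; compatible choices of base affine spaces ($\tB$ for $\GL(D)$, $\tB_V\times\tB_W$ for the Levi) realize $\CY$ as the output of a parabolic-induction-type correspondence applied to $\CY_V\times\CY_W$, and one appends the inert factors $D$, $W$ and the flag $F$ occurring in $\tilde A$ of~(\ref{left}). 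Exactly as in~\cite{L3} one then checks the two required facts: (i) the relevant inner squares of this diagram are Cartesian modulo the proper morphism $p$ and the smooth morphism $q$; (ii) every map in the diagram is equivariant for the diagonal $\GL(D)$-action and for the dilation $\Gm$-action on the vector-space factors.

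Granting (i) and (ii), the assertion is formal. Proper base change along $p$, smooth base change along $q$, the projection formula, and the compatibility of $f^!$ and $pr_*$ with exterior products identify $p_!q^*(\CG\boxtimes\CF)[n-m]$ with $\Char(\CM)$, where $\CM$ on $\CY\times D$ is the parabolic horocycle-induction of $\mathcal K'\boxtimes\mathcal K$; the shifts in the definitions of $*$, $\Char$ and $\on{ch}$ are calibrated (by $n-m$ and by $\dim\CB-\dim\CB_V-\dim\CB_W$) precisely so that $\CM$ is again perverse. That $\Char$ may legitimately be applied to $\CM$ is now checked directly: $\CM$ is $\GL(D)\times\Gm$-equivariant because the correspondence was constructed $\GL(D)\times\Gm$-equivariantly, and $\CM$ is $H_D$-monodromic because the $H_V\times H_W$-monodromicity of $\mathcal K'\boxtimes\mathcal K$, together with the residual torsor structure $\tB\to\CB$ made compatible with $\tB_V\times\tB_W$ through the Levi, upgrades to $H_D$-monodromicity along the induction correspondence. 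Hence $\CG*\CF=\Char(\CM)$, so its irreducible perverse constituents lie in $\MC(\GL(D)\times D)$. The right-convolution case $\CF'*\CG'$ is the same argument with diagram~(\ref{right}) and the condition $v\in F$ in $\tilde B$ in place of $v\in D$; since $b$ is smooth of relative dimension $0$, no additional shift intervenes, and the two Levi factors simply exchange roles.

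The step I expect to be the main obstacle is the construction and commutativity of this diagram — in particular verifying that the square comparing the mirabolic horocycle diagram for $\GL(D)$ with the induction diagram~(\ref{left}) is Cartesian modulo the proper map $p$ and the smooth map $q$, while simultaneously controlling the two $H$-torsors and the dilation $\Gm$-action on the $D$-factor. This is precisely the content of Lusztig's computation for ordinary parabolic induction of character sheaves; once it is available, the mirabolic refinement adds only the passive vector-space factors and the vector $v$, and the rest is base change, the projection formula, and shift bookkeeping.
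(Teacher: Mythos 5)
Your proposal is correct and follows essentially the same approach as the paper: the paper's entire proof is the remark that the statement ``is proved like Proposition~4.8.b) in~\cite{L3},'' and your argument is precisely a spelling-out of Lusztig's parabolic-induction proof in the mirabolic/horocycle setting, with the same reduction to direct summands, the same key Cartesian diagram comparing the horocycle and induction correspondences, and the same base-change, projection-formula, and shift bookkeeping.
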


We denote by $\Ql$ the unique $\Gm$-equivariant mirabolic character
sheaf on $\GL(D)\times D$ for $\dim(D)=0$.

\begin{prop}
\label{small}
Let $\CG\in\calC(\GL(V))$, and $\CG'\in\calC(\GL(W))$ be {\em
  irreducible} character sheaves. Then $\CG*\Ql*\CG'$ is irreducible.
\end{prop}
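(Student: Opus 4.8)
The plan is to reduce the irreducibility of $\CG*\Ql*\CG'$ to the already-established structure theory of mirabolic character sheaves restricted to the nilpotent cone, exactly as in the proof of Theorem~\ref{past} and Theorem~\ref{summer}. First I would reduce to the unipotent case. By the decomposition theorem and the fact (Proposition~\ref{refer}) that left and right convolution with Lusztig's character sheaves preserve the category $\MC(\GL(D)\times D)$, the object $\CG*\Ql*\CG'$ is a direct sum of irreducible mirabolic character sheaves, so it suffices to show that exactly one summand occurs, with multiplicity one. Since any character sheaf $\CG\in\calC(\GL(V))$ is (up to shift and twist) a summand of an induced sheaf, and by transitivity of induction one reduces to the case where $\CG$ and $\CG'$ are summands of $(\pi)_*$ of constant sheaves on flag varieties; via the associativity of convolution this lets one replace $\CG*\Ql*\CG'$ inside a fixed sum by the full pushforwards, and it is enough to decompose $IC_{(1,0,\dots,0)}^{*l}*\Ql*IC_{(1,0,\dots,0)}^{*m}$ and match multiplicities against the Schur--Weyl decomposition of $V^{\otimes l}\otimes V^{\otimes m}$.

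Second, I would invoke the geometric input already developed: by the semismallness established in subsection~\ref{winter} (the Lemma in the proof of Theorem~\ref{summer}, together with~(\ref{fg}) and~(\ref{obr})), restricting to $\CN_n\times D\subset\GL_n\times D$ and using the Mirkovi\'c--Vybornov construction of~\ref{Mir}, the convolution $IC_{(1,0,\dots,0)}^{*l}*IC_{(0^N,0^N)}*IC_{(1,0,\dots,0)}^{*m}$ restricted to the nilpotent locus is $\bigoplus_{|\mu|=m}^{|\lambda|=l}K_\mu\otimes K_\lambda\otimes IC_{(\lambda,\mu)}$, and by~(\ref{last}) these $IC_{(\lambda,\mu)}$ are precisely the restrictions $\CF_{\lambda,\mu}|_{\CN_n\times D}$, which are pairwise non-isomorphic irreducibles. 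Because restriction to a locally closed subset kills no nonzero summand and no two distinct irreducible mirabolic character sheaves can restrict to isomorphic (nonzero) perverse sheaves on $\CN_n\times D$ — this is the content one extracts from the fact that $\CF_{\lambda,\mu}\mapsto\CF_{\lambda,\mu}|_{\CN_n\times D}$ is injective on isomorphism classes, as used implicitly in Theorem~\ref{past} — the decomposition of $\CG*\Ql*\CG'$ is governed entirely by its restriction to the unipotent locus.

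Third, I would read off irreducibility from the case where $\CG=\CF_{\lambda,\bzero}$-type sheaves, i.e. unipotent character sheaves $\CF_\lambda$ on $\GL(V)$ and $\CF_\mu$ on $\GL(W)$ indexed by single partitions: for these, convolution with $\Ql$ on the left and right should yield exactly $\CF_{\lambda,\mu}$, which is irreducible by construction in~(\ref{last}). Indeed $\CF_\lambda*\Ql*\CF_\mu$ appears as the $K_\lambda\otimes K_\mu$-isotypic piece of $IC_{(1,\dots,0)}^{*|\lambda|}*\Ql*IC_{(1,\dots,0)}^{*|\mu|}$ after decomposing each $IC_{(1,\dots,0)}^{*l}\simeq\bigoplus_{|\lambda|=l}K_\lambda\otimes\CF_\lambda$ (Satake plus the unipotent analogue), so Schur--Weyl duality $K_\lambda\otimes K_\mu$ pairing with $\CF_{\lambda,\mu}$ with multiplicity one gives $\CF_\lambda*\Ql*\CF_\mu\simeq\CF_{\lambda,\mu}$. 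For general irreducible $\CG,\CG'$ one passes to the non-unipotent classes by a central-character / Jordan decomposition argument: an irreducible character sheaf on $\GL_n$ is, up to the action of a quasi-semisimple twist, induced from a cuspidal on a Levi, and mirabolic convolution is compatible with this induction (Proposition~\ref{refer} and its proof via~\cite{L3}), so the multiplicity-one statement on the unipotent part propagates.

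The main obstacle will be the last step: making precise that two distinct irreducible $\Gm$-equivariant mirabolic character sheaves cannot become isomorphic (or both contribute) after the convolution, i.e. that the ``restriction to the support stratum'' argument is faithful in the needed range. Concretely, one must show that the support of $\CG*\Ql*\CG'$ meets the relevant locally closed stratum where the $\CF_{\lambda,\mu}$-type analysis applies, and that the multiplicity space there is one-dimensional; this is where the semismallness of the convolution morphism (subsection~\ref{winter}) and the explicit Mirkovi\'c--Vybornov slice are doing the real work, since they guarantee the pushforward is perverse with the prescribed multiplicities and no spurious shifts. I expect this to go through by combining the decomposition theorem with self-duality (as already argued for~(\ref{obr})) to rule out extra summands, and then Schur--Weyl duality to pin the multiplicity to one.
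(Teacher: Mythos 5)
Your proposal takes a fundamentally different and much more roundabout route than the paper, and it contains genuine gaps that prevent it from constituting a proof.

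The paper's argument is short and direct. Unwinding the definitions in \S\ref{lr}, one has
$\CG*\Ql*\CG' = d_!\,b^*\bigl(r^*\CG\boxtimes\CG'[n-m]\bigr)$
on the diagram~(\ref{right}), where $r$ is the projection forgetting the vector. Since $b$ is smooth of relative dimension $0$ and $r^*\CG[n-m]\boxtimes\CG'$ is irreducible perverse, the pullback $b^*(\cdots)$ is an irreducible perverse sheaf on the stack $B$ (the IC-extension of a local system on an open substack). The morphism $d$ is proper, is a generic isomorphism onto its image $\GL(D)\backslash\fX$ (one reconstructs $F=\langle v,gv,g^2v,\ldots\rangle$), and, by an argument parallel to Lusztig's Proposition 4.5 in~\cite{L2.5}, $d$ is stratified \emph{small}. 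Smallness plus generic isomorphism forces $d_!$ of an IC-sheaf to be a single IC-sheaf, hence irreducible. That is the whole proof, and it treats arbitrary irreducible $\CG,\CG'$ uniformly; no reduction to the unipotent case is needed.

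Your proposal instead tries to reduce to the unipotent situation and read off multiplicities from Schur--Weyl duality and Theorem~\ref{summer}. Two steps do not hold up. First, you rely on the claim that $\CF\mapsto\CF|_{\CN_n\times D}$ is injective on isomorphism classes of irreducible mirabolic character sheaves, so that the decomposition of $\CG*\Ql*\CG'$ is ``governed entirely by its restriction to the unipotent locus.'' This is not established in the paper and is false for the full category $\MC(\GL(D)\times D)$: non-unipotent mirabolic character sheaves can have support disjoint from, or meeting trivially, the unipotent cone, so restriction to $\CN_n\times D$ is not faithful and cannot detect all summands. Second, the passage from the unipotent case to general $\CG,\CG'$ via ``Jordan decomposition / cuspidal induction propagates the multiplicity-one statement'' is entirely asserted; this is precisely where the work would have to be done, and nothing in the paper or in your sketch supplies it. You do mention semismallness near the end, which is adjacent to the real mechanism, but the decisive point — that $d$ is \emph{small} and a generic isomorphism, so that the pushforward is a single IC-sheaf with multiplicity one — is the ingredient your argument is missing, and once one has it the whole reduction apparatus becomes unnecessary.
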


\begin{proof} Let $\dim(D)=n,\ \dim(W)=m,\ \dim(V)=n-m$. Recall the
diagram~(\ref{right}), and denote by $r:\
\GL(V)\backslash(\GL(V)\times V)\to\GL(V)\backslash\GL(V)$ the natural
projection (forgetting vector $v$). Then
$\CG*\Ql*\CG'=d_!b^*(r^*\CG\boxtimes\CG'[n-m])$.
The sheaf $b^*(r^*\CG\boxtimes\CG'[n-m])$ is irreducible perverse on
$B$; more precisely, it is the intermediate extension of a local
system on an open part of $B$. 
The image of proper morphism $d$ coincides with
$\GL(D)\backslash\fX$ (notations of~\ref{unip}), and 
$d:\ B\to\GL(D)\backslash\fX$ is generically isomorphism:
$F$ is reconstructed as $F=\langle v,gv,g^2v,\ldots\rangle$.
Finally, the arguments absolutely similar to the proof of
Proposition~4.5 of~\cite{L2.5} prove that $d$ is stratified small.
This implies that $d_!b^*(r^*\CG\boxtimes\CG'[n-m])$ is
irreducible. \end{proof}

\begin{conj}
\label{big}
Any irreducible $\Gm$-equivariant mirabolic character sheaf on
$\GL(D)\times D$ is isomorphic to $\CG*\Ql*\CG'$ for some 
$\CG\in\calC(\GL(V))$, and $\CG'\in\calC(\GL(W))$ where
$\dim(V)+\dim(W)=\dim(D)$. 
\end{conj}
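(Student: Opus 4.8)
The goal is to upgrade Proposition~\ref{small} to a bijection. Concretely, I would show that the assignment $(\CG,\CG')\mapsto\CG*\Ql*\CG'$ induces a bijection from the set of pairs of \emph{irreducible} character sheaves $\CG\in\calC(\GL(V))$, $\CG'\in\calC(\GL(W))$ — taken over all splittings $\dim V+\dim W=\dim D$ and up to isomorphism — onto the set of isomorphism classes of irreducible $\Gm$-equivariant mirabolic character sheaves on $\GL(D)\times D$. Proposition~\ref{refer} places the image in $\MC(\GL(D)\times D)$ and Proposition~\ref{small} guarantees irreducibility, so two things remain: injectivity, which is the accessible half, and surjectivity, which is the real content.

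\emph{Injectivity.} One first separates different splittings geometrically. As noted in the proof of Proposition~\ref{small}, the support of $\CG*\Ql*\CG'$ is $\GL(D)\backslash\fX$ (notation of~\ref{unip}), the locus of $(g,v)$ with $\dim\langle v,gv,g^2v,\dots\rangle\le n-m$; since this support determines $m=\dim W$, it determines the pair of dimensions. For a fixed splitting, $d\colon B\to\GL(D)\backslash\fX$ is generically an isomorphism with $F$ recovered as $\langle v,gv,g^2v,\dots\rangle$, so restricting the identity $\CG*\Ql*\CG'=d_!b^*(r^*\CG\boxtimes\CG'[n-m])$ (with $r$ as in that proof) to the open locus identifies the generic local system of $\CG*\Ql*\CG'$ with that of $\CG\boxtimes\CG'$; this already recovers the semisimple parameters of $\CG$ and $\CG'$. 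To remove the residual ambiguity I pass to $\sk=\Fq$ and use that the Frobenius trace functions of the sheaves $\CG*\Ql*\CG'$ form a \emph{basis} — in particular are linearly independent — of the space of $\GL_n(\Fq)$-invariant functions on $\GL_n(\Fq)\times\Fq^n$, as shown in~\ref{medium}; distinct pairs therefore give non-isomorphic perverse sheaves.

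\emph{Surjectivity.} The strategy follows Lusztig's proof that every character sheaf on a connected reductive group is a constituent of a complex obtained by \emph{full} parabolic induction from a maximal torus. The first step would be a mirabolic analogue: every irreducible $\Gm$-equivariant mirabolic character sheaf on $\GL(D)\times D$ is a constituent of $\CG*\Ql*\CG'$ for \emph{some} pair — not of general $\CG,\CG'$ but, after induction in stages, for $\CG$, $\CG'$ themselves fully induced from tori; equivalently, one must show that the only ``cuspidal'' mirabolic datum is the sheaf $\Ql$ on $\GL_0\times\{0\}$, all others being obtained by the left/right induction of~\ref{lr}. The second step, by contrast, is formal: the horocycle space of $\GL(D)$ together with the auxiliary flag fibres, over the choice of a $g$-stable subspace $F\subset D$, through the Levi $\GL(F)\times\GL(D/F)$, so that the diagrams~(\ref{left}) and~(\ref{right}) let $\Char$ of a fully induced sheaf be computed stratum by stratum; iterating down to tori on both sides produces, among its constituents, exactly the sheaves $\CG*\Ql*\CG'$, with the forward containment given by Proposition~\ref{refer} and the reverse by a dimension and support count (as in the smallness input of Proposition~\ref{small}).

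The first step of surjectivity is the main obstacle, and it is exactly what is missing: there is no cleanness or cuspidality theory for mirabolic character sheaves playing the role that the classification and cleanness of cuspidal character sheaves play in Lusztig's work. It would already suffice to treat the unipotent case — that every unipotent $\Gm$-equivariant mirabolic character sheaf is a constituent of $(\pi_{n,m})_*IC(\tX)$ for some $m$, the mirabolic Springer picture of~\ref{unip} — the general $H$-monodromic case then reducing, along the usual lines, to the unipotent situation for a relative Weyl group together with Travkin's combinatorics. One can instead try a counting argument over $\Fq$: injectivity together with the spanning statement of~\ref{medium} shows that the trace functions of the $\CG*\Ql*\CG'$ are a basis, so their number equals the dimension of the space of $\GL_n(\Fq)$-invariant functions on $\GL_n(\Fq)\times\Fq^n$ (the sum over $a+b=n$ of the product of the numbers of irreducible characters of $\GL_a(\Fq)$ and $\GL_b(\Fq)$); it would then suffice to bound the number of irreducible $\Gm$-equivariant Weil mirabolic character sheaves above by this dimension, i.e.\ to know their trace functions are linearly independent. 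That bound, however, again seems to rest on the same structural input, which is why the statement remains only a conjecture.
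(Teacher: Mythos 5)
The statement you were given is Conjecture~\ref{big}, which the paper does \emph{not} prove; it is stated as an open problem. The paper supplies only the three supporting ingredients you already marshal — Proposition~\ref{refer} (that $\CG*\CF$ and $\CF'*\CG'$ are $\Gm$-equivariant mirabolic character sheaves), Proposition~\ref{small} (that $\CG*\Ql*\CG'$ is irreducible), and the observation in~\ref{medium} that over $\Fq$ the trace functions $S_{\bfeta'}\bpi_{(\bzero,\bzero)}S_\bfeta$ form a basis of the mirabolic Green bimodule $\MA$ — and then notes that the conjecture, if true, would give a positive answer to a question of Lusztig. There is therefore no proof in the paper to compare against, and your write-up, to its credit, does not pretend to close the gap: you say explicitly that the cuspidality input needed for surjectivity is ``exactly what is missing'' and that the statement ``remains only a conjecture.'' That is the correct assessment.

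A few remarks on the two halves you do sketch. The injectivity discussion is fine but ends with a redundant and slightly misplaced step: once you recover $m=\dim W$ from the support $\GL(D)\backslash\fX$ and recover the pair $\CG\boxtimes\CG'$ from the generic local system on the open locus where $d$ is an isomorphism, you are done, and nothing more is needed; the appeal to linear independence of Frobenius trace functions is a statement about Weil sheaves over $\Fq$ and does not directly bear on the geometric statement over an algebraically closed base, so it cannot serve as the ``residual'' step you frame it as. For surjectivity your diagnosis is the right one: the missing ingredient is a cuspidality/cleanness theory for mirabolic character sheaves, or equivalently a reduction of the general $H$-monodromic case to the unipotent Springer picture of~\ref{unip}. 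The counting alternative you propose collapses to exactly the content of the conjecture, since bounding the number of irreducible $\Gm$-equivariant Weil mirabolic character sheaves by $\dim\MA$ requires knowing that \emph{all} their trace functions are linearly independent, which is what one is trying to establish; you note this yourself. In short: you have not proved anything the paper leaves open, but you have correctly mapped what is known, what would be needed, and why the statement is a conjecture rather than a theorem.
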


\subsection{Mirabolic Green bimodule}
\label{medium}
Once again $\sk=\Fq$.
We will freely use the notation of Chapter~IV of~\cite{Mac}.
In particular, $\Phi$ is the set of Frobenius orbits in
$\overline{\Fq}{}^\times$, or equivalently, the set of irreducible
monic polynomials in $\Fq[t]$ with the exception of $f=t$. We consider
the set of isomorphism classes $(D,g,v)$ where $D$ is a $\sk$-vector
space, $v\in D$, and $g$ is an invertible linear operator $D\to D$. 
Similarly to section~2 of {\em loc. cit.} we identify this set with
the set of finitely supported functions 
$(\blambda,\bmu):\ \Phi\to\CP\times\CP$ 
to the set of pairs of partitions. Note that
$\dim(D)=|(\blambda,\bmu)|:=\sum_{f\in\Phi}\deg(f)(|\blambda(f)|+|\bmu(f)|)$. 
Let $c_{(\blambda,\bmu)}\subset\GL(D)\times D$ be the corresponding
$\GL(D)$-orbit, and let $\bpi_{(\blambda,\bmu)}$ be its characteristic
function. Let $\MA$ be a $\overline{\mathbb Q}_l$-vector space with
the basis $\{\bpi_{(\blambda,\bmu)}\}$. It is evidently isomorphic to 
$\bigoplus_{n\geq0}\overline{\mathbb
  Q}_l(\GL(\sk^n)\times\sk^n)^{\GL(\sk^n)}$. 

Recall the Green algebra $\CA=\bigoplus_{n\geq0}\CA_n$ of class functions
on the groups $\GL_n(\Fq)$ (see section~3 of {\em loc. cit.}; the
multiplication is given by parabolic induction) with the
basis $\{\bpi_\bmu\}$ of characteristic functions of conjugacy
classes. The construction of~\ref{lr} equips $\MA$ with a structure of an
$\CA$-bimodule. It is easily seen to be a free bimodule of rank 1 with
a generator $\bpi_{(\bzero,\bzero)}$
given by the zero function (taking the value of zero
bipartition on any $f\in\Phi$). The structure constants are as
follows (the proof is similar to~(3.1) of {\em loc. cit.}).

\begin{equation}
\label{mgreen}
\bpi_\bnu\bpi_{(\blambda',\bmu')}=\sum_{(\blambda,\bmu)} 
\on{g}^{(\blambda,\bmu)}_{\bnu(\blambda',\bmu')}\bpi_{(\blambda,\bmu)},\
\bpi_{(\blambda',\bmu')}\bpi_\bnu=\sum_{(\blambda,\bmu)} 
\on{g}^{(\blambda,\bmu)}_{(\blambda',\bmu')\bnu}\bpi_{(\blambda,\bmu)},
\end{equation}
where 
\begin{equation}
\label{33}
\on{g}^{(\blambda,\bmu)}_{\bnu(\blambda',\bmu')}=
\prod_{f\in\Phi}G^{(\blambda(f),\bmu(f))}_{\bnu(f)(\blambda'(f),\bmu'(f))}
(q^{\deg(f)}),\ 
\on{g}^{(\blambda,\bmu)}_{(\blambda',\bmu')\bnu}=
\prod_{f\in\Phi}G^{(\blambda(f),\bmu(f))}_{(\blambda'(f),\bmu'(f))\bnu(f)}
(q^{\deg(f)}).
\end{equation}

Now recall another basis $\{S_\bfeta\}$ of $\CA$ (see section~4 of
{\em loc. cit.}), numbered by the finitely supported functions from
$\Theta$ to $\CP$. Here $\Theta$ is the set of Frobenius orbits on the
direct limit $L$ of character groups $({\mathbb
  F}_{q^n}^\times)^\vee$. This is the basis of irreducible
characters. According to Lusztig, for $|\bfeta|=m$, the function
$S_\bfeta$ is the Frobenius trace function of an irreducible Weil character
sheaf $\CS_\bfeta$ on $\GL_m$. Due to Proposition~\ref{small},
for $|\bfeta|+|\bfeta'|=n$, the function
$S_{\bfeta'}\bpi_{(\bzero,\bzero)}S_\bfeta$ is the Frobenius trace
function of an irreducible $\Gm$-equivariant Weil mirabolic character sheaf
$\CS_{\bfeta'}*\Ql*\CS_\bfeta$
on $\GL(D)\times D,\ \dim(D)=n$. We know that the set of functions 
$\{S_{\bfeta'}\bpi_{(\bzero,\bzero)}S_\bfeta\}$ forms a basis of the
mirabolic Green bimodule $\MA$. Hence, if Conjecture~\ref{big} holds
true, then the set of Frobenius trace functions of irreducible
$\Gm$-equivariant Weil mirabolic character sheaves forms a basis of
$\MA$. This would be a positive answer to a question of G.~Lusztig.

\end{document}